\tikzstyle{bv}=[circle,draw=black!90,fill=black!100,thick,inner
\definecolor{DarkBlue}{rgb}{0,0.08,0.45} 
\definecolor{DarkGreen}{rgb}{0,0.5,0.0}
\newtheorem*{theorem*}{Theorem}
\newtheorem*{corollary*}{Corollary \ref{skewcor}}
\newtheorem*{theorem3*}{Theorem \ref{thm:indecomp}}
\newtheorem*{recapprop*}{Proposition \ref{RL:qprop}}
\newtheorem*{conjecture*}{Conjecture \ref{conj:rothe}}
\numberwithin{equation}{section}
\newtheorem{theorem}[equation]{Theorem}
\newtheorem{proposition}[equation]{Proposition}
\newtheorem{corollary}[equation]{Corollary}
\newtheorem{conjecture}[equation]{Conjecture}
\newtheorem{question}[equation]{Question}
\theoremstyle{definition}
\newtheorem{example}[equation]{Example}	
\newtheorem{examples}[equation]{Examples}
\newtheorem{rmk}[equation]{Remark}
\newtheorem{rmks}[equation]{Remarks}
\newenvironment{remark}{\begin{rmk}}{\hfill $\blacksquare$ \end{rmk}}
\newcommand{\F}{\mathbf{F}_q}
\newcommand{\ZZ}{\mathbf{Z}}
\newcommand{\id}{\textrm{id}}
\DeclareMathOperator{\inv}{inv}
\DeclareMathOperator{\mat}{mat}
\DeclareMathOperator{\rk}{rank}
\title{Counting matrices over finite fields with support on skew Young diagrams and complements of Rothe diagrams} 
\author{Aaron J. Klein (Brookline High School)\\
Joel Brewster Lewis (University of Minnesota)%
\thanks{Supported by NSF RTG grant NSF/DMS-1148634}
\\
Alejandro H. Morales (LaCIM-Universit\'e du Qu\'ebec \`a Montr\'eal)%
\thanks{Supported by a CRM-ISM Postdoctoral Fellowship}
}
\date{\today}
\begin{document}
\setcounter{tocdepth}{2}

\maketitle

\begin{abstract}
We consider the problem of finding the number of matrices over a finite field with a certain rank and with support that avoids a subset of the entries. These matrices are a $q$-analogue of permutations with restricted positions (i.e., rook placements). For general sets of entries these numbers of matrices are not polynomials in $q$ (Stembridge 98); however, when the set of entries is a Young diagram, the numbers, up to a power of $q-1$, are polynomials with nonnegative coefficients (Haglund 98). 

In this paper, we give a number of conditions under which these numbers are polynomials in $q$, or even polynomials with nonnegative integer coefficients.  We extend Haglund's result to complements of skew Young diagrams, and we apply this result to the case when the set of entries is the Rothe diagram of a permutation. In particular, we give a necessary and sufficient condition on the permutation for its Rothe diagram to be the complement of a skew Young diagram up to rearrangement of rows and columns. We end by giving conjectures connecting invertible matrices whose support avoids a Rothe diagram and Poincar\'e polynomials of the strong Bruhat order.
\end{abstract}


\section{Introduction}

We study certain $q$-analogues of permutations with restricted positions, or equivalently of placements of non-attacking rooks.  The $q$-analogue of permutations we work with is invertible $n\times n$ matrices over the finite field $\F$ with $q$ elements, as in \cite[Ch.\ 1]{EC1}. Then the analogue of permutations with restricted positions is invertible matrices over $\F$ with some entries required to be zero.

Specifically, given a subset $S$  of $\{1,2,\ldots,n\} \times \{1,2,\ldots,n\}$, let $\mat_q(n,S,r)$ be the number of $n\times n$ matrices over $\F$ with rank $r$, none of whose nonzero entries lie in $S$.  This is clearly an analogue (in the plain English meaning) of the problem of counting permutations whose permutation matrix has no $1$ in the position of any entry of $S$, but actually much more can be said.  In \cite[Prop.\ 5.1]{LLMPSZ} it was shown that $\mat_q(n,S,r)/(q-1)^r$ is in fact an \emph{enumerative} $q$-analogue of permutations with restricted positions; that is, its value, modulo $(q-1)$, counts the placements of $r$ non-attacking rooks on the complement of $S$. 

The function $\mat_q(n,S,r)$ can exhibit a variety of different behaviors, as seen in the following three examples.
\begin{examples}
\begin{compactenum}      
\item When $S = \varnothing$, $\mat_q(n,\varnothing,n)$ is the number of $n\times n$ invertible matrices over $\F$, which is 
\[
 (q^{n}-1)(q^{n}-q)\cdots (q^n-q^{n-1})=q^{\binom{n}{2}}(q-1)^n \prod_{i=1}^{n} (1+q+\cdots+q^{i-1}).
\]
The term $\prod_{i=1}^n(1+q+\cdots+q^{i-1})$ in the product is a polynomial with positive coefficients, and in fact is the generating series for permutations in $\mathfrak{S}_n$ by number of inversions.

\item  When $n=3$ and $S$ is the diagonal $\{(1,1),(2,2),(3,3)\}$ we have
\[\mat_q(3,\{(1,1),(2,2),(3,3)\},3)=(q-1)^3(q^3+2q^2-q).\]
The number of invertible $n\times n$ matrices for general $n$ over $\F$ with zero diagonal was computed in \cite[Prop. 2.2]{LLMPSZ}; as in this example, it is of the form $(q - 1)^n f(q)$ for a polynomial $f$ with both positive and negative  coefficients.


\item When $n=7$, Stembridge \cite{Stem} found a set $F$ with $28$ elements (shown in Figure~\ref{fig:Fano}) such that $\mat_q(7,F,7)$ is given by a quasi-polynomial in $q$, that is, by two distinct polynomials depending on whether $q$ is even or odd. The set $F$ is the complement of the incidence matrix of the {\em Fano plane}.
\end{compactenum}
\end{examples}

\begin{figure}\begin{center}
\raisebox{.75in}{$
\left[\begin{array}{ccccccc}
a_{{1\textcolor{red}{1}}}&a_{{12}}&0&0&0&0&a_{{17}}
\\ a_{{2\textcolor{red}{1}}}&0&a_{{23}}&0&0&a_{{26}}&0
\\ a_{{3\textcolor{red}{1}}}&0&0&a_{{34}}&a_{{35}}&0&0
\\ 0&a_{{42}}&a_{{43}}&0&a_{{45}}&0&0
\\ 0&a_{{52}}&0&a_{{54}}&0&a_{{56}}&0
\\ 0&0&a_{{63}}&a_{{64}}&0&0&a_{{67}}
\\ 0&0&0&0&a_{{75}}&a_{{76}}&a_{{77}}\end{array}\right]
$}
\hspace{1in}
\begin{tikzpicture}[scale=0.75,thick]
                \node [style=bv] (1) at (0, 3.464) [label=above:$1$] {};
                \node [style=bv] (3) at (1, 1.732) [label=above:$4$] {}; 
                \node [style=bv] (4) at (0, 1.155) [label=left:$7$] {};
                \node [style=bv] (5) at (-2, 0) [label=below:$3$] {};
                \node [style=bv] (6) at (0, 0) [label=below:$6$] {};
                \node [style=bv] (7) at (2, 0) [label=below:$5$]  {};
                \draw (0,1.155) circle (1.155cm);
                \draw[red] [-] (1) -- (5);
                \node [style=bv] (2) at (-1, 1.732) [label=above:$2$] {};
                \draw [-] (1) -- (7); 
                \draw [-] (1) -- (6);
                \draw [-] (7) -- (5);
                \draw [-] (5) -- (3);
                \draw [-] (7) -- (2);
\end{tikzpicture}
\end{center}
\caption{A representative matrix counted in $\mat_q(7,F,7)$ where $F$ is the complement of the point-line incidence matrix of the Fano plane, shown at right. Stembridge \cite{Stem} showed this to be the smallest example of the form $\mat_q(n, S, n)$ that is not a polynomial in $q$.}
\label{fig:Fano}
\end{figure}

From the examples above we see that $\mat_q(n,S,r)$ is not necessarily a polynomial in $q$, and if it is a polynomial in $q$ it might or might not be of the form $(q-1)^rf(q)$ where $f(q)$ is a polynomial with nonnegative integer coefficients.  Then a natural question to ask is the following:

\begin{question} 
What families of sets $S$ are there such that $\mat_q(n,S,r)/(q-1)^r$ is (i) not a polynomial in $q$, (ii) a polynomial in $q$, or (iii) a polynomial in $q$ with nonnegative integer coefficients? 
\end{question}

In the remainder of this introduction, we give a summary of our progress towards answering this question.

\subsection*{Outline and summary of results}

In Section~\ref{sec:def}, we give the definitions and notation that will be used throughout the paper including the definition and some properties of $q$-rook numbers.  

In Section~\ref{sec:general conditions}, we address general conditions on $r$ and $S$ under which the function $\mat_q(n,S,r)$ is always a polynomial in $q$.  We show that if $r = 1$ then $\mat_q(n, S, 1)$ is a polynomial in $q$ for any set $S$, though not necessarily with nonnegative coefficients.  (It is an open question whether there is a set $S$ such that $\mat_q(n, S, 2)$ is non-polynomial in $q$.)  Our main result of this section is to extend work of Stembridge \cite{Stem} to give reductions to compute $\mat_q(n, S, r)$ in terms of smaller instances of similar problems when $S$ has a row or column with very few or very many entries.

In the rest of the paper, we discuss special families of sets $S$ such that $\mat_q(n,S,r)/(q-1)^r$ is a polynomial in $q$ with nonnegative integer coefficients. Haglund \cite{Hag} showed that if the set $S$ is a {\em straight shape} then $\mat_q(n,\overline{S},r)/(q-1)^r$ is a polynomial with nonnegative integer coefficients. Our second main result, proved in Section~\ref{sec:NE}, is to extend this to complements of {\em skew shapes}.

\begin{corollary*} For any skew shape $S_{\lambda/\mu}$,
\[\mat_q(n,\overline{S_{\lambda/\mu}},r) = (q-1)^r f(q),\]
where $f(q)$ is a polynomial with nonnegative integer coefficients.
\end{corollary*}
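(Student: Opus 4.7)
My plan is to extend Haglund's argument for straight shapes by induction on $|\mu|$. The base case $\mu=\emptyset$ is exactly Haglund's theorem; the inductive step removes a single box from $\mu$.

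Concretely: fix a removable corner $(i,j)$ of $\mu$, i.e., a box such that $\mu':=\mu-e_i$ (with $j=\mu_i$) is again a partition. Every $n\times n$ matrix with support in $S_{\lambda/\mu'}$ either has its $(i,j)$-entry equal to zero, in which case its support automatically lies in $S_{\lambda/\mu}$, or has that entry nonzero. This yields the identity
\[
\mat_q(n,\overline{S_{\lambda/\mu'}},r) \;=\; \mat_q(n,\overline{S_{\lambda/\mu}},r) + N_{ij}(r),
\]
where $N_{ij}(r)$ counts rank-$r$ matrices with support in $S_{\lambda/\mu'}$ and $a_{ij}\neq 0$. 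By pivoting on the nonzero entry $a_{ij}$---clearing row $i$ and column $j$ by row/column operations in the spirit of the reductions from Section~\ref{sec:general conditions}---one should reduce $N_{ij}(r)$ to $(q-1)$ times a count of rank-$(r-1)$ matrices with support in a smaller restricted shape obtained by deleting row $i$ and column $j$. A careful check should show this smaller configuration is equivalent (after relabeling rows and columns) to a skew shape on an $(n-1)\times(n-1)$ grid with strictly smaller $|\mu|$, so that the induction hypothesis applies to both terms on the right.

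The main obstacle is that the identity above expresses $\mat_q(n,\overline{S_{\lambda/\mu}},r)$ as a \emph{difference}, and a naive induction would not preserve nonnegativity of coefficients. To avoid this, I would aim to prove a stronger inductive statement of the form
\[
\mat_q(n,\overline{S_{\lambda/\mu}},r) \;=\; (q-1)^r \sum_P q^{\mathrm{stat}(P)},
\]
where $P$ runs over an explicit combinatorial set of $r$-rook placements on $S_{\lambda/\mu}$ and $\mathrm{stat}$ is a nonnegative integer-valued statistic. In the straight-shape case, Haglund provides such a formula using a statistic based on left-to-right maxima (the $\lrmax$ operator declared in the preamble suggests this is the natural framework). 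The delicate task will be to choose a statistic on skew-shape rook placements that both specializes correctly to Haglund's in the straight case and is compatible with the reduction above; once such a bijective/statistic description is in hand, the cancellation in the identity becomes manifest and the desired nonnegativity follows automatically.
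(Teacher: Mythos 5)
There are two genuine gaps here, and the second one is the entire content of the statement. First, the pivoting step does not work as described. The removable corner $(i,j)=(i,\mu_i)$ is a NW inner corner of $S_{\lambda/\mu'}$, and using a nonzero $a_{ij}$ to clear row $i$ and column $j$ requires adding multiples of row $i$ (supported on columns $j,\dots,\lambda_i$) to rows $k>i$ whose allowed support only reaches column $\lambda_k\le\lambda_i$, and adding multiples of column $j$ to columns $j'>j$ whose allowed support starts lower down; either operation pushes nonzero entries outside $S_{\lambda/\mu'}$. So $N_{ij}(r)$ does not reduce to $(q-1)q^{a}$ times a count over a smaller \emph{skew shape}: after elimination the residual constraint is not a support condition on an $(n-1)\times(n-1)$ skew board. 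This is exactly the obstruction the paper discusses in Section~\ref{sec:obstruction} for why naive row/column reductions fail in general. Second, you correctly identify that your recursion produces a \emph{difference}, so nonnegativity cannot follow from the induction as set up, and you defer the fix to an unspecified statistic on skew rook placements that is "compatible with the reduction." That statistic is precisely what must be exhibited; without it you have a proof outline, not a proof, and it is not clear the outline can be completed along these lines.

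For comparison, the paper does not induct on $|\mu|$ at all. It isolates the {\sf NE} Property (which every skew shape has), and in Theorem~\ref{NEthm} performs a single Gaussian elimination in north-east order directly on any board $B$ with that property: the {\sf NE} Property guarantees elimination never leaves the board, the pivots form an $r$-rook placement $C$ on $B$, and each placement is reached from exactly $(q-1)^r q^{\#B-r-\inv_{\sf NE}(C,B)}$ matrices. Summing gives $\mat_q(n,\overline{B},r)=(q-1)^r q^{\#B-r}R_r^{\sf(NE)}(B,q^{-1})$, which is manifestly $(q-1)^r$ times a polynomial with nonnegative integer coefficients since $\inv_{\sf NE}(C,B)\le \#B-r$. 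In other words, the statistic you were hoping to find is $\#B-r-\inv_{\sf NE}(C,B)$, and the right way to obtain it is a one-shot elimination argument on the whole skew board rather than a corner-by-corner induction.
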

In fact, we show that this is true for an even larger class of shapes than skew shapes, namely those that have what we call the {\em North-East Property}.  Also, because $\mat_q(n,S,r)$  is invariant under permuting rows and columns we have that $\mat_q(n,\overline{S},r)/(q-1)^r$ is a polynomial with nonnegative integer coefficients for any set $S$ that is a straight or skew shape after permuting rows and columns. 

Another natural family of diagrams is the collection of {\bf Rothe diagrams} of permutations, which appear in the study of Schubert calculus. The Rothe diagram $R_w$ of a permutation $w$ is a subset of $\{1,2,\ldots,n\} \times \{1,2,\ldots,n\}$ whose cardinality is equal to the number of inversions of $w$; it is given by 
\[
R_w = \{(i,j) \mid 1\leq i, j\leq n,\,\, w(i)>j,\,\, w^{-1}(j)>i\}.
\] 
See Figure~\ref{fig:exRothe} for some examples of Rothe diagrams. Lascoux and Sch\"{u}tzenberger showed in \cite{LS} that the Rothe diagram $R_w$ of a permutation $w$ is a straight shape up to permutation of rows and columns if and only if $w$, written as a word $w_1w_2\cdots w_n$, avoids the permutation pattern $2143$ (i.e., there is no sequence $i<j<k<l$ such that $w_j<w_i<w_l<w_k$). Our third main result is to give an analogous criterion for the case of complements of skew shapes.

\begin{theorem3*} 
The Rothe diagram $R_w$ of a permutation $w$ is, up to permuting its rows and columns, the complement of a skew shape if and only if $w$ can be decomposed as $w = a_1a_2\ldots a_kb_1b_2\ldots b_{n-k}$ where $a_i<b_j$ for all $i$ and $j$, and both $a_1a_2\ldots a_k$ and $b_1b_2\ldots b_{n-k}$ are $2143$-avoiding.
\end{theorem3*}

We also show that this condition is equivalent to the statement that $w$ avoids the nine patterns  $24153$, $25143$, $31524$, $31542$, $32514$, $32541$, $42153$, $52143$, and $214365$, and we express the generating series for these permutations in terms of the generating series for $2143$-avoiding permutations.

By Corollary~\ref{skewcor}, if $w$ satisfies the condition above then $\mat_q(n,R_w,r)/(q-1)^r$ is a polynomial with nonnegative integer coefficients.
Surprisingly, computer calculations for $n\leq 7$ and $0\leq r\leq n$ \cite{Code} suggest that $\mat_q(n,R_w,r)/(q-1)^r$ is a polynomial with nonnegative integer coefficients for \emph{all} permutations $w$ (see Conjecture~\ref{conj:rothe}). Moreover, computer calculations also suggest that when  $w$ avoids the permutation patterns $1324, 24153, 31524$, and $426153$ we have that $\mat_q(n,R_w,n)/(q-1)^n$ is (up to a power of $q$) the {\em Poincar\'e polynomial} $P_w(q) = \sum_{u\geq w} q^{\inv(u)}$, where the sum is over all permutations $u$ of $n$ above $w$ in the {\em strong Bruhat order} (see Conjecture~\ref{conj:PoinRothe}). Interestingly, these four patterns have appeared in related contexts \cite{GR2, AP, Sj, HLSS}.

Supplementary code for calculating $\mat_q(n,S,r)$ and other related objects and data generated by this code to test the conjectures in Section~\ref{sec:poincare} are available at the following website:
\begin{center}
\url{http://sites.google.com/site/matrixfinitefields/}
\end{center}

\subsection*{Acknowledgements} 
We would like to express thanks to the following individuals for their contributions: Alexander Postnikov, for pointing us in the direction of Rothe diagrams; Richard Stanley and Steven V Sam, for helpful suggestions and discussions; John Stembridge, for making his Maple {\tt reduce}  package \cite{reduce} publicly available;  Sara Billey and Brendan Pawlowski, for mentioning the example in Remark~\ref{rem:stansymmfcn}; and the anonymous referee for several helpful comments including the observation at the end of Remark~\ref{rem:rank1t}.  AJK is grateful to the PRIMES program at the MIT Mathematics Department, where this research was done.  JBL and AHM would like to dedicate this paper to the memories of Richard Geller  and Nicol\'as del Castillo.

\section{Definitions} \label{sec:def}

We denote $[n]=\{1,2,\ldots,n\}$. The {\bf support} of a matrix $A$ is the set of indices $(i,j)$ of the nonzero entries $a_{ij}\neq 0$.  Fix integers $n$ and $r$ such that $n \geq 1$ and $n \geq r \geq 0$, and let $S$ be a subset of $[n]\times [n]$.  We define $\mat_q(n,S,r)$ to be the number of $n \times n$ matrices over $\F$ with rank $r$ and support contained in $\overline{S}$, the complement of $S$.  That is, $\mat_q(n, S, r)$ counts matrices $A$ of rank $r$ such that if $(i,j)\in S$ then $a_{ij} = 0$.  We consider the problem of computing $\mat_q(n,S,r)$.  

We now define several special types of diagrams that will be important to us in what follows.  Examples of these diagrams are given in Figure~\ref{fig:strtskshapes}.  We say that $S \subseteq [n]\times [n]$ is a {\bf straight shape} if its elements form a Young diagram of a partition. (Throughout this paper we use {\em English notation} and matrix coordinates for partitions.) Thus, to every integer partition $\lambda$ with at most $n$ parts and with largest part at most $n$ (i.e., to each sequence of integers $(\lambda_1,\lambda_2,\ldots,\lambda_n)$ such that $n \geq \lambda_1\geq \lambda_2 \geq \cdots \geq \lambda_n \geq 0$) there is an associated set $S = S_\lambda=\{(i,j) \mid 1\leq j\leq \lambda_i\}$. We denote by $|\lambda|$ the {\bf size} $\lambda_1+\lambda_2+\cdots$ of the shape $\lambda$. This is also the number of entries in $S_{\lambda}$. Similarly, if $\lambda$ and $\mu$ are partitions such that $S_{\mu}\subseteq S_{\lambda}$ then we say that the set $S_{\lambda}\setminus S_{\mu}$ is a {\bf skew shape} and we denote it by $S_{\lambda/\mu}$.  Lastly, we say that a set $S\subseteq [n]\times [n]$ has the {\bf North-East ({\sf NE}) Property} if for all $i, i', j, j' \in [n]$ such that $i'<i$ and $j<j'$ we have that if $(i,j), (i',j)$, and $(i,j')$ are in $S$, then so is $(i',j')$. Note that for any partitions $\lambda$ and $\mu$, $S_{\lambda}, \overline{S_{\lambda}}$, and $S_{\lambda/\mu}$ have the {\sf NE} Property. But $\overline{S_{\lambda/\mu}}$ in general does not have this property.

We denote by $\mathfrak{S}_n$ the group of permutations on $[n]$. We write permutations as words $w=w_1w_2\cdots w_n$ where $w_i$ is the image of $w$ at $i$. Let $\inv(w)$ denote the number of {\bf inversions} $\#\{(i,j) \mid i<j, w_i>w_j\}$ of $w$. We also identify each permutation $w$ with its {\bf permutation matrix}, the $n\times n$ $0$-$1$ matrix with $1$s in positions $(i,w_i)$. 

We think of the $1$s in a permutation matrix as $n$ non-attacking rooks on $[n]\times [n]$. In this case, the number of inversions of the permutation is exactly the number of elements in $[n]\times [n]$ that are not directly below/south (in the same column) or to the right/east (in the same row) of any placed rook. We generalize this as follows. Given a subset $B$ of $[n]\times [n]$ (sometimes called a \emph{board}) and a rook placement $C$ of $r$ non-attacking rooks on $B$, the {\bf SE-inversion number} $\inv_{{\sf SE}}(C,B)$ is the number of elements in $B$ {\em not} directly south (in the same column) or to the  east (in the same row) of any placed rook. Then the $r$th  ({\sf SE}) {\bf $q$-rook number} of Garsia and Remmel \cite{GR} is 
\[
R^{\sf (SE)}_r(B,q) = \sum_{C} q^{\inv_{{\sf SE}}(C,B)},
\]
where the sum is over all rook placements $C$ of $r$ non-attacking rooks on $B$. We define  the north east inversion number $\inv_{\sf NE}(C,B)$ and rook polynomial $R^{\sf (NE)}_r(B,q)$ analogously.

\begin{proposition}[\cite{GR}]
Given an integer partition $\lambda=(\lambda_1,\lambda_2,\ldots,\lambda_n)$ such that $n\geq \lambda_1\geq \lambda_2 \geq \cdots \geq \lambda_n\geq 0$, set $S_{\lambda} = \{(i,j) \mid 1\leq i \leq n, 1\leq i \leq \lambda_j\}$.  The Garsia-Remmel $q$-rook number $R_n^{\sf (SE)}(S_{\lambda},q)$ is
\begin{equation}\label{GRhooknum}
R_n^{\sf (SE)}(S_{\lambda},q) = \prod_{i=1}^n [\lambda_{n-i+1}-i+1]_q,
\end{equation}
where $[m]_q=1+q+q^2+\cdots +q^{m-1}$.
\end{proposition}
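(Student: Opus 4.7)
The plan is to prove the product formula by induction on $n$, following the classical Garsia--Remmel strategy of conditioning on the position of the rook in the shortest row.

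For the base case $n=1$, the board $S_{(\lambda_1)}$ is a single row of $\lambda_1$ cells. A rook placed at $(1,c)$ cancels itself and the $\lambda_1 - c$ cells east of it, leaving the $c-1$ cells west of it uncancelled, so
\[
R_1^{\sf (SE)}(S_{(\lambda_1)},q) = \sum_{c=1}^{\lambda_1} q^{c-1} = [\lambda_1]_q,
\]
matching the formula.

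For the inductive step, I will fix $\lambda$ with $\lambda_n \geq 1$ (if $\lambda_n = 0$ both sides vanish) and condition on the column $c \in \{1,\ldots,\lambda_n\}$ of the rook in row $n$. The key geometric observation is that $c \leq \lambda_n \leq \lambda_j$ for every $j < n$, so column $c$ of $S_\lambda$ meets every row; removing row $n$ and column $c$ from $S_\lambda$ therefore produces a Ferrers board $B'$ of shape $(\lambda_1 - 1, \lambda_2 - 1, \ldots, \lambda_{n-1} - 1)$, independent of $c$. Writing $C'$ for the restriction of a placement $C$ to rows $<n$ (viewed as an $(n-1)$-rook placement on $B'$), the main technical step is the factorization
\[
\inv_{{\sf SE}}(C, S_\lambda) = (c - 1) + \inv_{{\sf SE}}(C', B').
\]
Granting this,
\[
R_n^{\sf (SE)}(S_\lambda, q) = \sum_{c=1}^{\lambda_n} q^{c-1} \cdot R_{n-1}^{\sf (SE)}(B', q) = [\lambda_n]_q \cdot \prod_{i=1}^{n-1} [\lambda_{n-i} - i]_q
\]
by the inductive hypothesis applied to $B'$, and reindexing $j = i+1$ in the product combines these factors to give $\prod_{i=1}^n [\lambda_{n-i+1} - i + 1]_q$, as desired.

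The hard part is verifying the factorization, which I would establish by a case analysis. Partition the cells of $S_\lambda$ into four disjoint classes: (a) cells of $B'$, (b) cells in row $n$ other than the rook, (c) cells in column $c$ other than the rook, and (d) the rook cell $(n,c)$ itself. For class (a), the row-$n$ rook shares neither row nor column with such a cell, so its SE-cancellation status is the same in $C$ as in $C'$; these cells contribute $\inv_{{\sf SE}}(C', B')$ in total. For class (b), the $\lambda_n - c$ cells east of the row-$n$ rook are cancelled by it, while no rook of $C'$ lies in row $n$, so the remaining $c-1$ cells survive and contribute exactly $c-1$. Classes (c) and (d) are entirely cancelled by the row-$n$ rook itself (every cell of class (c) is in its column and the rook cell is canceled by convention), so they contribute nothing. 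Summing these contributions yields the factorization, and the induction closes. The main subtlety is tracking the paper's SE-cancellation convention consistently when handling classes (b) and (c).
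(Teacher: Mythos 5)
First, a point of reference: the paper does not actually prove this proposition --- it is quoted from Garsia--Remmel \cite{GR} --- so there is no internal proof to compare yours against, and I am judging the argument on its own. Your skeleton is the standard one and is sound: induct on $n$, condition on the column $c$ of the rook in the shortest row, observe that deleting that row and column $c$ yields the Ferrers board $(\lambda_1-1,\dots,\lambda_{n-1}-1)$ independently of $c$ (because $c\le\lambda_n\le\lambda_j$), and reindex the product. The gap is in the case analysis behind the factorization $\inv_{\sf SE}(C,S_\lambda)=(c-1)+\inv_{\sf SE}(C',B')$, which uses the wrong cancellation directions for the statistic as the paper literally defines it. Under that definition a rook cancels the cells \emph{south} of it in its column and east of it in its row; the row-$n$ rook sits in the \emph{bottom} row, so it cancels nothing in your class (c) (those cells are north of it), and each class (b) cell west of it lies south of the rook of $C'$ standing in its column (in a full placement every column carries a rook), hence is cancelled rather than contributing to $c-1$. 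Concretely, for $\lambda=(2,2)$ and the identity placement (rooks at $(1,1)$ and $(2,2)$, so $c=2$) one has $\inv_{\sf SE}=0$, while your factorization predicts $(c-1)+\inv_{\sf SE}(C',B')=1$. So the induction does not close for the literal {\sf SE} statistic.

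What your bookkeeping actually matches is the {\sf NE} statistic (rook cancels north in its column and east in its row): there the bottom-row rook does wipe out the rest of column $c$, the surviving bottom-row cells are exactly the $c-1$ cells to its west, and the cells of $B'$ are unaffected by it, so your argument is a correct proof that $R_n^{\sf(NE)}(S_\lambda,q)=\prod_i[\lambda_{n-i+1}-i+1]_q$. I should add that the mismatch appears to lie partly in the statement itself: for full placements the paper's own argument for Proposition~\ref{prop:nrksskew} gives $\inv_{\sf SE}(C,S_\lambda)=\inv(w)$, so $R_n^{\sf(SE)}(S_\lambda,q)=\sum q^{\inv(w)}$ over $w$ with $w_i\le\lambda_i$; for $\lambda=(2,1)$ this is $q$ (the unique placement leaves $(1,1)$ uncancelled), not $[1]_q[1]_q=1$. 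The stated product is the {\sf NE} value, equivalently the orientation needed to make Theorem~\ref{jh} and Corollary~\ref{SENErook} consistent. So you have essentially proved the correct ({\sf NE}) form of the formula, but as written your proof announces one convention and computes with another, and the step handling classes (b) and (c) is false for the convention you claim to be using; you should either prove the {\sf NE} statement explicitly and address the passage between the two conventions, or redo the bookkeeping (and the choice of which row to peel) for the genuine {\sf SE} statistic.
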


\begin{remark}\label{SENE}
We will see as a corollary of Theorems~\ref{jh} and~\ref{NEthm} that for a straight shape $\lambda$, $R_r^{\sf (SE)}(S_{\lambda},q)= R_r^{\sf (NE)}(S_{\lambda},q)$. However, this is not true for all skew shapes. For example, if $\lambda/\mu = 4432/31$, we have $R_3^{\sf (SE)}(S_{4432/31},q)=1+6q^2+5q^3+3q^4+2q^5+q^6$ and $R_3^{\sf (NE)}(S_{4432/31},q)=2q+8q^2 + 7q^3+q^4$. But for skew shapes in the case of $n$ rooks we do have an analogous relation, as the following result shows.
\end{remark}

\begin{proposition} \label{prop:nrksskew}
For a skew shape $S_{\lambda/\mu} \subseteq [n] \times [n]$ we have
\[
R_n^{\sf (SE)}(S_{\lambda/\mu},q) =q^{\binom{n}{2}-|\mu|}\cdot R_n^{\sf(NE)}(S_{\lambda/\mu},q^{-1}).
\]
\end{proposition}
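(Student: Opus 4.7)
The plan is to establish, for every placement $C$ of $n$ non-attacking rooks on $S_{\lambda/\mu}$, the pointwise identity
\[
\inv_{\sf SE}(C, S_{\lambda/\mu}) + \inv_{\sf NE}(C, S_{\lambda/\mu}) = \binom{n}{2} - |\mu|.
\]
Granting this, each term $q^{-\inv_{\sf NE}(C)}$ in $R_n^{\sf (NE)}(S_{\lambda/\mu}, q^{-1})$ becomes $q^{\inv_{\sf SE}(C) - \binom{n}{2} + |\mu|}$, and multiplying by $q^{\binom{n}{2} - |\mu|}$ produces exactly $R_n^{\sf (SE)}(S_{\lambda/\mu}, q)$, yielding the proposition.

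To prove the pointwise identity, write $w$ for the permutation with $C = \{(i, w_i) : i \in [n]\}$. Because $C$ places a rook in every row and every column, each cell $(i,j) \in S_{\lambda/\mu}$ falls into exactly one of three classes relative to the rook in row $i$: (a) $j = w_i$, the rook itself; (b) $j > w_i$, east of the rook; or (c) $j < w_i$, west of the rook. Cells of types (a) and (b) are covered by the rook in row $i$ under both the SE and NE conventions (the row-component of ``covering'' is the same in both orientations), so they contribute to neither statistic. For a type (c) cell the rook in row $i$ does not cover it under either convention, and whether it is counted depends solely on the rook in column $j$: it contributes to $\inv_{\sf SE}$ exactly when $w^{-1}(j) > i$ (the column's rook is strictly south of the cell, so cannot cover it from above) and to $\inv_{\sf NE}$ exactly when $w^{-1}(j) < i$. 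Since $(i,j)$ is not itself a rook we have $w^{-1}(j) \neq i$, so exactly one of these alternatives occurs, and each type (c) cell contributes $1$ to the sum $\inv_{\sf SE}(C) + \inv_{\sf NE}(C)$.

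It remains to count type (c) cells. In row $i$, since $(i, w_i) \in S_{\lambda/\mu}$ forces $\mu_i < w_i \leq \lambda_i$, the type (c) cells are exactly those $(i,j)$ with $\mu_i < j < w_i$, of which there are $w_i - \mu_i - 1$. Summing over rows,
\[
\sum_{i=1}^n (w_i - \mu_i - 1) \;=\; \binom{n+1}{2} - |\mu| - n \;=\; \binom{n}{2} - |\mu|,
\]
as required. The only real subtlety is keeping the case analysis straight, in particular the uniform treatment of the rook cells themselves and of cells east of their row's rook under both the SE and NE conventions; once these are handled the identity is immediate, and crucially it fails for $r<n$ (as the remark on $\lambda/\mu = 4432/31$ shows) because with fewer than $n$ rooks some cells have neither a row-rook nor a column-rook forcing coverage.
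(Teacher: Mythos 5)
Your proof is correct and follows essentially the same route as the paper: a termwise comparison of the two inversion statistics over placements of $n$ non-attacking rooks, reducing everything to the identity $\inv_{\sf SE}(C)+\inv_{\sf NE}(C)=\binom{n}{2}-|\mu|$. The paper obtains this by noting separately that $\inv_{\sf SE}(C)=\inv(w)-|\mu|$ and $\inv_{\sf NE}(C)=\binom{n}{2}-\inv(w)$ for the associated permutation $w$, whereas you prove the sum directly by the cell-by-cell classification; both are valid and the difference is only organizational.
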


\begin{proof}

For each rook placement of $n$ rooks on $S_{\lambda/\mu}$, the number of SE-inversions is equal to the number of inversions of the associated permutation $w$ minus the size of $\mu$. On the other hand, the number of NE-inversions of this rook placement on $S_{\lambda/\mu}$ is $\binom{n}{2}$ minus the number of inversions of $w$. The result follows.
\end{proof}




\begin{figure}
$$
\begin{array}{ccc}
\text{(i)} \quad  S_{(4,3,2)} & \text{(ii)} \quad   S_{(5,5,4,3,1)/(2,2,1)} & \text{(iii)} \quad  S\\[2mm]
\left[ \begin {array}{ccccc} 0&0&0&0&a_{{15}}\\ 0&0
&0&a_{{24}}&a_{{25}}\\ 0&0&a_{{33}}&a_{{34}}&a_{
{35}}\\ a_{{41}}&a_{{42}}&a_{{43}}&a_{{44}}&a_{
{45}}\\ a_{{51}}&a_{{52}}&a_{{53}}&a_{{54}}&a_{
{55}}\end {array} \right],
&
\left[ \begin {array}{ccccc} 
a_{{11}}&a_{{12}}&0&0&0\\ 
a_{{21}}&a_{{22}}&0&0&0\\ 
a_{{31}}&0&0&0&a_{{35}}\\ 
0&0&0&a_{{44}}&a_{{45}}\\ 
0&a_{{52}}&a_{{53}}&a_{{54}}&a_{{55}}
\end {array} \right],
&
\left[ \begin {array}{ccccc} a_{{11}}&a_{{12}}&a_{{13}}&0&0
\\ a_{{21}}&0&0&0&0\\ 0&0&0&0&0
\\ a_{41}&0&0&0&a_{{45}}\\ a_{51}&a_{52}&0&a_{{5
4}}&a_{{55}}\end {array} \right] 
\\
\text{(iv)}\quad \overline{S_{(4,3,2)}} &
\text{(v)} \quad \overline{S_{(5,5,4,3,1)/(2,2,1)}} & \text{(vi)} \quad \overline{S}\\[2mm]
\left[ \begin {array}{ccccc} a_{{11}}&a_{{12}}&a_{{13}}&a_{{14}}&0
\\ a_{{21}}&a_{{22}}&a_{{23}}&0&0
\\ a_{{31}}&a_{{32}}&0&0&0\\ 0&0
&0&0&0\\ 0&0&0&0&0\end {array} \right], 
&
\left[ \begin {array}{ccccc} 
0&0&a_{{13}}&a_{{14}}&a_{{15}}\\ 
0&0&a_{{23}}&a_{{24}}&a_{{25}}\\ 
0&a_{{32}}&a_{{33}}&a_{{34}}&0\\ 
a_{{41}}&a_{{42}}&a_{{43}}&0&0\\ 
a_{{51}}&0&0&0&0
\end {array} \right], &
\left[ \begin {array}{ccccc} 0&0&0&a_{{14}}&a_{{15}}
\\ 0&a_{{22}}&a_{{23}}&a_{{24}}&a_{{25}}
\\ a_{{31}}&a_{{32}}&a_{{33}}&a_{{34}}&a_{{35}}
\\ 0&a_{{42}}&a_{{43}}&a_{{44}}&0
\\ 0&0&a_{{53}}&0&0\end {array}
 \right] 
\end{array}
$$
\caption{Representative matrices from $\mat_q(5,S,r)$ when $S$ is (i) a straight shape, (ii) a skew shape, (iii) a set with the {\sf NE} Property; and their respective complements (iv),(v),(vi).}
\label{fig:strtskshapes}
\end{figure}

\section{General polynomiality results}\label{sec:general conditions}

In this section, we give some general conditions under which $\mat_q(n, S, r)$ is a polynomial.  In Subsection~\ref{sec:rank 1}, we show that for any $n$ and $S$, the function $\mat_q(n, S, 1)$ is polynomial in $q$.  In Subsection~\ref{sec:few or many zeroes}, we give reduction formulas for computing $\mat_q(n, S, r)$ in terms of smaller instances when $S$ has a row or column with either very few or very many entries.  

Throughout this section we work with rectangular matrices of any dimensions rather than just square matrices.  Thus, in this section, for integers $m$, $n$ and $r$ and a subset $S$ of $[m] \times [n]$, we denote by $\mat_q(m \times n, S, r)$ the number of $m \times n$ matrices of rank $r$ over $\F$ whose support avoids $S$.

\subsection{Polynomial formula for the rank-one case \texorpdfstring{$\mat_q(n,S,1)$}{mat(q;n,S,1)}} \label{sec:rank 1}

In Figure~\ref{fig:Fano} we showed an example by Stembridge~\cite{Stem} of a set $S \subseteq [7] \times [7]$ such that $\mat_q(7,S,7)$ is not a polynomial in $q$.  In this paper, we mainly focus on studying certain families of sets $S$ where $\mat_q(n,S,r)$ is a polynomial in $q$. But before looking at particular sets $S$, we consider the rank $r=1$ case for an arbitrary set $S$.

\begin{proposition}
For any $m$ and $n$ and any set $S\subseteq [m]\times [n]$, $\mat_q(m \times n,S,1)$ is a polynomial in $q$.
\end{proposition}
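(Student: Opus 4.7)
The plan is to parametrize rank-one matrices by their outer-product factorization and then sum over the possible row and column supports. Every rank-one matrix $A \in \F^{m \times n}$ can be written as $A = uv^T$ with $u \in \F^m$ and $v \in \F^n$ both nonzero, and this representation is unique up to the $\F^*$-action $(u,v) \mapsto (cu, c^{-1}v)$. If $I \subseteq [m]$ and $J \subseteq [n]$ are the sets of indices where $u$ and $v$ are nonzero respectively, then the support of $A$ is exactly $I \times J$, so requiring the support of $A$ to avoid $S$ is equivalent to $(I \times J) \cap S = \emptyset$.

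Next, I would fix such a pair $(I,J)$ and count the rank-one matrices $A$ with row support $I$ and column support $J$. There are $(q-1)^{|I|}$ choices of $u$ supported on $I$ and $(q-1)^{|J|}$ choices of $v$ supported on $J$, and the $\F^*$-action on these pairs is free and preserves the supports. Hence each orbit has size $q-1$, yielding $(q-1)^{|I|+|J|-1}$ distinct rank-one matrices per valid pair $(I,J)$. Summing over all admissible pairs gives
\[
\mat_q(m \times n, S, 1) = \sum_{\substack{\emptyset \neq I \subseteq [m],\ \emptyset \neq J \subseteq [n] \\ (I \times J) \cap S = \emptyset}} (q-1)^{|I|+|J|-1},
\]
which is manifestly a polynomial in $q$ as a finite sum of polynomials.

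There is no serious obstacle here: the only step requiring genuine care is verifying that the $\F^*$-action on pairs with prescribed supports is free, which amounts to observing that if $uv^T = (cu)(c^{-1}v)^T$ with $u,v$ both nonzero, then $c$ is uniquely determined. Note that the resulting polynomial need not have nonnegative integer coefficients when expanded in $q$ (as opposed to in $q-1$), which is compatible with the negative-coefficient phenomenon already visible in the diagonal example from the introduction.
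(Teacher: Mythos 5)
Your proof is correct. It takes essentially the same counting strategy as the paper but with a finer decomposition: the paper partitions rank-one matrices only by the set $T$ of nonzero rows and counts the common row vector directly inside the coordinate subspace of allowed columns, yielding the term $(q^{a_S(T)}-1)(q-1)^{\#T-1}$, whereas you partition by both the row support $I$ and the column support $J$ and count outer products $uv^T$ modulo the free $\F^*$-action. The two formulas agree after expanding $q^{a_S(T)}-1=\sum_{\varnothing\neq J}(q-1)^{\#J}$ over nonempty subsets $J$ of the $a_S(T)$ allowed columns. Your refinement buys something concrete: the answer is manifestly a nonnegative integer combination of powers of $q-1$, which immediately verifies, in the rank-one case, the observation at the end of Remark~\ref{rem:rank1t} that substituting $t=q-1$ into \eqref{eq:rank1case} produces nonnegative coefficients --- a fact that requires the extra binomial expansion to extract from the paper's form of the sum.
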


\begin{proof}
Fix $m$, $n$ and $S$.  We count matrices with a given collection of nonzero rows.  Given a nonempty subset $T \subseteq [m]$ of rows, let $a_S(T)$ be the number of columns with no entries which are both in one of the rows of $T$ and in $S$.  Then there are exactly $(q^{a_S(T)} - 1)(q - 1)^{\# T - 1}$ matrices of rank $1$ over $\F$ whose support avoids $S$ and whose nonzero rows are exactly those in $T$.  It follows immediately that
\begin{equation} \label{eq:rank1case}
\mat_q(m \times n, S, 1) = \sum_{\substack{T \subseteq [m] \\ \text{nonempty}}} (q^{a_S(T)} - 1)(q - 1)^{\# T - 1}
\end{equation}
is a polynomial in $q$.
\end{proof}

\begin{example} \label{negcoeffex}
Take the $4\times 4$ shape $S = \{(i,i) \mid 1 \leq i \leq 4\}$.  Then
\begin{align*}
\mat_q(4 \times 4,S,1) & = \sum_{k = 1}^4 \binom{4}{k} (q^{4 - k} - 1)(q - 1)^{k - 1} \\
& = (q - 1) \cdot 2(7q^2 - 2q + 1).
\end{align*}
(In fact one can show that if $S$ is the diagonal $\{(i,i) \mid 1 \leq i\leq n\}$ then $\mat_q(n\times n,S,1) = \frac{1}{q-1}((2q-1)^n - 2q^n+1)$.)
\end{example}

\begin{remark} \label{rem:rank1t}
In later sections of this paper, we show that for certain diagrams $S$ (straight shapes, skew shapes, and conjecturally Rothe diagrams of permutations), the function $\mat_q(n, S, r)/(q - 1)^r$ is not only a polynomial in $q$ but also has nonnegative coefficients.  However, this is not the case for matrices of rank $1$: although each summand is a power of $q - 1$ times a polynomial with positive coefficients, the powers of $q-1$ differ.  So, as in Example~\ref{negcoeffex}, negative coefficients can turn up for certain choices of $S$. Interestingly, if we substitute $t=q-1$ in \eqref{eq:rank1case} we obtain a polynomial in $t$ with nonnegative coefficients. Is this true more generally?
\end{remark}

\subsection{Reduction formulas when \texorpdfstring{$S$}{S} has rows with few or many entries}\label{sec:few or many zeroes}

In \cite[Thm.\ 8.2]{Stem}, Stembridge gave some structural restrictions on a minimal set $S$ such that $\mat_q(n, S, n)$ is non-polynomial.  In particular, he showed that in such a minimal example, every row contains at least three entries of $\overline{S}$ and at least two entries of $S$.  In this section, we push his results slightly further: we show that for any rank $r$, if either $S$ or $\overline{S}$ has a row with at most two entries then we can express $\mat_q(m \times n, S, r)$ as a linear combination with polynomial coefficients of similar expressions for matrices of strictly smaller size.  (Of course, the same arguments apply to columns as well as to rows.)   
Furthermore, we explain why this approach cannot be used in the case where $S$ contains three entries in some row.  This does not settle the question of whether there are some $m$ and $n$ and a set $S \subseteq [m] \times [n]$ with only three entries per row such that $\mat_q(m \times n, S, r)$ is non-polynomial in $q$, though we conjecture that such examples exist; in Stembridge's example \cite{Stem} (see Figure~\ref{fig:Fano}) the set $S$ has four entries per row and $\overline{S}$ has three.

For convenience, throughout this section we write expressions like $\mat_q(a \times b, S, r)$ without worrying whether $S \subseteq [a] \times [b]$, where properly we should write $\mat_q( a \times b, ([a] \times [b])\cap S, r)$.

\subsubsection{Reduction when $S$ has at most two entries in some row}

We wish to show that if $S$ has two or fewer entries in some row then $\mat_q(m \times n, S, r)$ can be reduced to a sum of polynomial multiples of similar but simpler expressions.  We begin with a useful proposition that we use in the proof of Theorem~\ref{thm:two zeroes} to do case analysis.

\begin{proposition}\label{prop:rank-nullity}
Suppose that $M = \begin{bmatrix}{\bf v}_1 & {\bf v}_2 & \cdots & {\bf v}_n\end{bmatrix}$ is an $m \times n$ matrix over $\F$ of rank $r$ and that the submatrix $\begin{bmatrix} {\bf v}_{n - k + 1} & \cdots & {\bf v}_n\end{bmatrix}$ has rank $r'$.  The number of tuples ${\bf w} = (w_1, w_2, \ldots, w_{n - k}) \in \F^{n - k}$ such that 
\[
\rk 
\begin{bmatrix}
{\bf v}_1 & \cdots & {\bf v}_{n - k} & {\bf v}_{n - k + 1} & \cdots & {\bf v}_n \\
w_1 & \cdots & w_{n - k} &       0       & \cdots &  0
\end{bmatrix}
 = r
\]
is $q^{r - r'}$.  For the other $q^{n - k} - q^{r - r'}$ tuples, this matrix has rank $r + 1$.
\end{proposition}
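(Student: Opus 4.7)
The plan is to argue via the row span of $M$ and then invoke rank-nullity. The rank of the enlarged $(m+1)\times n$ matrix is either $r$ or $r+1$: it is at least $r$ because the matrix contains $M$ as a submatrix, and at most $r+1$ because it has only one more row than $M$. So the question reduces to counting the tuples $\mathbf{w}$ for which the rank stays at $r$, which happens exactly when the appended row $(w_1,\ldots,w_{n-k},0,\ldots,0)$ lies in the row span $R\subseteq \F^n$ of $M$.

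Next I would introduce the projection $\pi\colon \F^n\to \F^k$ onto the last $k$ coordinates and study its restriction to $R$. A vector of the required form lies in $R$ if and only if it lies in $\ker(\pi|_R)$, so the count we want is $|\ker(\pi|_R)|$. The image $\pi(R)$ is the row span of the submatrix consisting of the last $k$ columns of $M$, namely $[{\bf v}_{n-k+1} \;\cdots\; {\bf v}_n]$. Since row rank equals column rank, $\dim \pi(R)=r'$, and since $\dim R=r$, rank-nullity gives $\dim \ker(\pi|_R)=r-r'$, so $|\ker(\pi|_R)|=q^{r-r'}$.

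Finally I would observe that the remaining $q^{n-k}-q^{r-r'}$ choices of $\mathbf{w}$ necessarily produce rank $r+1$ by the initial dichotomy. There is no real obstacle here: the argument is a direct application of rank-nullity once one recognizes that the condition on $\mathbf{w}$ is a linear one, namely membership in $\ker(\pi|_R)$. The only small care needed is to verify that $\pi(R)$ is indeed the row span of the last $k$ columns of $M$, which follows because projecting a linear combination of rows is the same as taking the corresponding linear combination of the projected rows.
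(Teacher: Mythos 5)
Your argument is correct and is essentially identical to the paper's proof: both reduce the count to the kernel of the projection of the row space onto the last $k$ coordinates and apply rank–nullity, using that the image of that projection is the row span of $\begin{bmatrix}{\bf v}_{n-k+1} & \cdots & {\bf v}_n\end{bmatrix}$ and hence has dimension $r'$. Your write-up just makes explicit a few steps (the rank dichotomy, row rank equals column rank) that the paper leaves implicit.
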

\begin{proof}
This is just the rank-nullity theorem: vectors $w$ that do not increase the rank are those that (when augmented by $k$ $0$s) lie in the row space of $M$.  The portion of the row space with last $k$ coordinates equal to $0$ is exactly the kernel of the projection onto these last $k$ coordinates.  The image of this projection has dimension $r'$, so the kernel has dimension $r - r'$, as desired.
\end{proof}

Now we use this to prove the main result of this section.  

\begin{theorem}\label{thm:two zeroes}
Suppose that $S \subseteq [m] \times [n]$ contains at most two entries in the $m$th row.  Then $\mat_q(m \times n, S, r)$ is equal to a linear combination of similar expressions for smaller matrices with coefficients in $\ZZ[q]$.
\end{theorem}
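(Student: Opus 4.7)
The plan is to condition on the top $m-1$ rows of a matrix counted by $\mat_q(m \times n, S, r)$ and use Proposition~\ref{prop:rank-nullity} to count the valid choices for the $m$th row. Write $T = \{j \in [n] : (m, j) \in S\}$, so $|T| \leq 2$ by hypothesis, and set $S' = S \cap ([m-1] \times [n])$. For each $(m-1) \times n$ matrix $M'$ whose support avoids $S'$, let $s := \rk M'$ and let $r'$ be the rank of the submatrix of $M'$ formed by the columns in $T$. After permuting columns so that $T$ is rightmost, Proposition~\ref{prop:rank-nullity} counts the number of bottom rows ${\bf w}$ supported off $T$ that make the full $m \times n$ matrix have rank $r$: it is $q^{r-r'}$ when $s = r$, $q^{n-|T|} - q^{r-1-r'}$ when $s = r-1$, and zero otherwise. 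Writing $N_{s, r'}$ for the number of such $M'$ with rank $s$ and with column-$T$-submatrix of rank $r'$, we obtain
\[
\mat_q(m \times n, S, r) = \sum_{r'=0}^{|T|}\Bigl[q^{r-r'} N_{r, r'} + (q^{n-|T|} - q^{r-1-r'}) N_{r-1, r'}\Bigr].
\]

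The remaining task is to express each $N_{s, r'}$ as a $\ZZ[q]$-linear combination of $\mat_q$-values on matrices of strictly smaller dimension. For $|T| \leq 1$ this is essentially immediate, since the condition ``column $c$ is identically zero'' is equivalent to deleting column $c$, yielding an $(m-1) \times (n-1)$-problem; the other $N_{s, r'}$ are then obtained by subtraction from the total $\mat_q((m-1) \times n, S', s)$. For $|T| = 2$ with $T = \{c_1, c_2\}$, the case $r' = 0$ reduces by deleting both columns to an $(m-1) \times (n-2)$-problem; the subcases of $r' = 1$ in which exactly one of $c_1, c_2$ is zero reduce analogously to $(m-1) \times (n-1)$-problems; and $N_{s, 2}$ is obtained by subtraction at the end.

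The only step of genuine content is the subcase of $|T| = 2$, $r' = 1$ in which columns $c_1$ and $c_2$ of $M'$ are both non-zero (hence non-trivial scalar multiples of one another). There is then a unique $\alpha \in \F^*$ with $M'$-column-$c_2 = \alpha \cdot M'$-column-$c_1$, and scaling column $c_2$ of $M'$ by $\alpha^{-1}$ preserves the rank and the zero pattern of $M'$; consequently the count is independent of $\alpha$, and summing over the $q-1$ choices of $\alpha$ gives $(q-1)$ times the count in the case $c_2 = c_1$. Identifying these two equal columns then reduces that count to an $(m-1) \times (n-1)$-problem on the forbidden set obtained from $S'$ by deleting column $c_2$ and enlarging the forbidden rows in column $c_1$ to $T_1 \cup T_2$, where $T_i = \{k : (k, c_i) \in S'\}$. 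This column-merging step, with the $(q-1)$ it contributes, is where the main obstacle lies and is precisely the reason that polynomial (rather than constant) coefficients in $\ZZ[q]$ are needed in the reduction.
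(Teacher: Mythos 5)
Your proposal is correct and follows essentially the same route as the paper: condition on the first $m-1$ rows, apply Proposition~\ref{prop:rank-nullity} to count admissible bottom rows, refine by the rank of the forbidden-column submatrix, and handle the nonzero-parallel-columns subcase by scaling one column (contributing the factor $q-1$) and merging the two forbidden sets into one column, which is exactly the paper's $S_2$ construction. Your organization via the statistic $r'$ and the counts $N_{s,r'}$ is a slightly more uniform packaging of the paper's explicit case list, but the content is identical.
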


\begin{corollary}
If $m$, $n$ and $S$ are chosen minimal (in the sense of row- or column-removal) so that $\mat_q(m \times n, S, r)$ is not polynomial in $q$ then $S$ contains at least three entries in each row and column.
\end{corollary}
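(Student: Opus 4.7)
The plan is to apply Proposition~\ref{prop:rank-nullity} to add the $m$th row to an $(m-1)\times n$ matrix. Let $s\in\{0,1,2\}$ denote the number of entries of $S$ in row $m$, at positions $(m,j_1),\ldots,(m,j_s)$, and let $S'=S\cap([m-1]\times[n])$. Every matrix counted by $\mat_q(m\times n,S,r)$ decomposes as an $(m-1)\times n$ submatrix $M'$ (of some rank $r'\in\{r-1,r\}$, avoiding $S'$) together with a final row $\mathbf{w}$ vanishing in positions $j_1,\ldots,j_s$. After permuting columns so that $j_1,\ldots,j_s$ appear at the end, Proposition~\ref{prop:rank-nullity} shows that the number of admissible $\mathbf{w}$ depends only on $r'$ and on $\rho:=\rk(M'|_{\{j_1,\ldots,j_s\}})$: it equals $q^{r-\rho}$ if $r'=r$ and $q^{n-s}-q^{r-1-\rho}$ if $r'=r-1$. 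Writing $A(r',\rho)$ for the number of $(m-1)\times n$ matrices of rank $r'$ avoiding $S'$ whose column-submatrix on $\{j_1,\ldots,j_s\}$ has rank $\rho$, we get
\[
\mat_q(m\times n,S,r)=\sum_{\rho=0}^{s}\bigl[q^{r-\rho}A(r,\rho)+(q^{n-s}-q^{r-1-\rho})A(r-1,\rho)\bigr],
\]
so it suffices to express each $A(r',\rho)$ as a $\ZZ[q]$-combination of $\mat_q$-values on matrices with strictly fewer rows.

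When $s=0$ only $\rho=0$ occurs and $A(r',0)=\mat_q((m-1)\times n,S',r')$. When $s=1$, the quantity $A(r',0)$ is the count of matrices with column $j_1$ identically zero, which equals $\mat_q((m-1)\times(n-1),S'',r')$ where $S''$ is $S'$ with column $j_1$ removed; and $A(r',1)=\mat_q((m-1)\times n,S',r')-A(r',0)$.

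The substantive case is $s=2$. Here $A(r',0)=\mat_q((m-1)\times(n-2),S''',r')$ with $S'''$ obtained from $S'$ by deleting both columns $j_1,j_2$, and $A(r',0)+A(r',1)+A(r',2)=\mat_q((m-1)\times n,S',r')$; however, these two relations do not determine three unknowns. The key trick is to count, for each scalar $c\in\F$, the $(m-1)\times n$ matrices of rank $r'$ avoiding $S'$ whose column $j_2$ equals $c$ times column $j_1$. For $c=0$ this is $\mat_q((m-1)\times(n-1),\cdot,r')$ with column $j_2$ removed. For each of the $q-1$ nonzero $c$, column $j_2$ is determined by column $j_1$, and the demand that column $j_2$ avoid its entries of $S'$ augments the zero-constraints on column $j_1$ to include all rows where $S'$ meets either column $j_1$ or column $j_2$; the count thus equals $\mat_q((m-1)\times(n-1),S^{**},r')$, independent of $c$. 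Summing over $c\in\F$ and keeping track of multiplicities---matrices with both of columns $j_1,j_2$ zero are counted $q$ times, while matrices with column $j_1\ne 0$ and columns $j_1,j_2$ linearly dependent are counted exactly once---produces the missing third linear equation. Solving then determines each $A(r',\rho)$ as a $\ZZ[q]$-combination of $\mat_q$-values on $(m-1)\times n$, $(m-1)\times(n-1)$, and $(m-1)\times(n-2)$ matrices.

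The main obstacle is the $s=2$ case: a straightforward column-deletion argument cannot separate matrices whose two distinguished columns are linearly dependent but both nonzero from those where they are independent, and so leaves one fewer equation than unknown. The scalar-multiple trick above provides precisely the missing equation. It also suggests why the strategy does not readily extend to $s\ge 3$: one would then need to stratify $M'$ by the rank of a larger column-submatrix, and the space of dependence relations can no longer be captured by a single scalar parameter.
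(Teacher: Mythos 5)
Your proposal is correct and follows essentially the same route as the paper: remove the last row, apply Proposition~\ref{prop:rank-nullity} after stratifying by the rank of the submatrix on the distinguished columns, and resolve the troublesome rank-one stratum in the two-entry case via the merged constraint set (your $S^{**}$ is the paper's $S_2$, and your sum over scalars $c$ is just the paper's direct enumeration of the zero/nonzero/parallel sub-strata, which carries the same $(q-1)$ factor). The only nit is that in your multiplicity bookkeeping the matrices with column $j_1=0$ and column $j_2\neq 0$ are counted zero times in the sum over $c$ and must be added back, but that count is again a difference of column-deleted $\mat_q$-values, so the system still closes.
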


The rest of this subsection is devoted to the proof of Theorem~\ref{thm:two zeroes}.

\begin{proof}
Consider the $m$th row of the matrix-to-be: it contains zero, one or two forced $0$s (i.e., elements of $S$), and either lies in a dimension-$r$ space spanned by the first $m - 1$ rows or lies outside their $(r-1)$-dimensional span.  We separately compute the number of matrices contributed in each of these three cases.
\begin{compactenum}[1.]
\item[Case 1:] The set $S$ contains no elements in the $m$th row.  We have two possibilities: first, it might be that the first $m - 1$ rows of the matrix span a space of dimension $r$ and the last row lies in this space.  Then we would have $\mat_q((m - 1) \times n, S, r)$ choices for the first $m - 1$ rows and $q^r$ choices for the last row, for a total contribution of $\mat_q((m - 1) \times n, S, r) \cdot q^r$.  Second, it might be that the first $m - 1$ rows span a space of dimension $r - 1$ and the last row lies outside this space.  Then we would have $\mat_q((m - 1) \times n, S, r - 1)$ choices for the first $m - 1$ rows and $q^n - q^{r - 1}$ choices for the last row, for a total contribution of $\mat_q((m - 1) \times n, S, r - 1) \cdot (q^n - q^{r-1})$.  Thus, the total contribution in this case is
\[
\mat_q((m - 1) \times n, S, r) \cdot q^r + \mat_q((m - 1) \times n, S, r - 1) \cdot (q^n - q^{r-1}).
\]
Observe that the two instances of the function $\mat_q$ that appear in this expression involve matrices strictly smaller than $m \times n$.

\item[Case 2:] The set $S$ contains one element in the $m$th row, without loss of generality the entry $(m, n)$.  We have two cases depending on the dimension of the space spanned by the $n$th column, i.e., whether this column is zero or not.
\begin{enumerate}
\item We count matrices in which the $n$th column is the zero vector.  In this case, we may choose the first $m - 1$ rows to span a space of dimension $r$ in $\mat_q((m - 1)\times(n - 1), S, r)$ ways and choose the last row in $q^r$ ways, or we may choose the first $m - 1$ rows to span a space of dimension $r - 1$ in $\mat_q((m - 1)\times(n - 1), S, r - 1)$ ways and choose the last row in $q^{n - 1} - q^{r - 1}$ ways.
\item We count matrices in which the $n$th column is not the zero vector.  In this case, we may choose the first $m - 1$ rows to span a space of dimension $r$ in $\mat_q((m- 1)\times n, S, r) - \mat_q((m - 1)\times(n - 1), S, r)$ ways and (by Proposition~\ref{prop:rank-nullity}) choose the last row in $q^{r - 1}$ ways, or we may choose the first $m - 1$ rows to span a space of dimension $r - 1$ in $\mat_q((m - 1) \times n, S, r - 1) - \mat_q((m - 1)\times(n - 1), S, r - 1)$ ways and choose the last row in $q^{n - 1} - q^{r - 2}$ ways.
\end{enumerate}
The total contribution from these subcases is
\begin{multline*}
\mat_q((m - 1)\times(n - 1), S, r) \cdot q^r + 
\mat_q((m - 1)\times(n - 1), S, r - 1) \cdot (q^{n - 1} - q^{r - 1}) + \\
+ \big(\mat_q((m-1)\times n, S, r) - \mat_q((m - 1)\times(n - 1), S, r)\big)\cdot q^{r - 1} + \\
+ \big(\mat_q((m-1) \times n, S, r - 1) - \mat_q((m - 1)\times(n - 1), S, r - 1) \big)\cdot (q^{n - 1} - q^{r - 2}).
\end{multline*}

\item[Case 3:]  The set $S$ contains two elements in the $m$th row, without loss of generality the elements $(m, n - 1)$ and $(m, n)$.  We have three cases depending on the dimension of the space spanned by the $(n - 1)$th and $n$th columns.
\begin{enumerate}
\item We count matrices in which the last two columns span a space of dimension $0$, i.e., both columns are all zero.  In this case, we may choose the first $m - 1$ rows of the matrix to span a space of dimension $r$ in $\mat_q((m - 1) \times (n - 2), S, r)$ ways and choose the last row to lie in this space in $q^r$ ways, or we may choose the first $m - 1$ rows to span a space of dimension $r - 1$ in $\mat_q((m - 1) \times (n - 2), S, r - 1)$ ways and choose the last row in $q^{n - 2} - q^{r - 1}$ ways.
 
\item We count matrices in which the last two columns span a space of dimension $1$.  There are three possible ways this can come about: the last column may be all zero and the next-to-last column nonzero, the next-to-last column may be all zero and the last column nonzero, or both columns may be nonzero but parallel.  Let 
$S_1$ be the set that results if we remove the next-to-last column from $S$ and replace each element $(i, n)$ in $S$ with $(i, n - 1)$, and let $S_2$ be the set that results if we remove the last two columns from $S$ and add a new entry $(i, n - 1)$ whenever either $(i, n - 1)$ or $(i, n)$ appeared in $S$.  With this notation, the matrices we desire to count fall into the following six classes:
\begin{enumerate}
\item We may choose the first $m - 1$ rows so that they span a space of dimension $r$, the last column is zero and the next-to-last column is nonzero in $\mat_q((m - 1) \times (n - 1), S
, r) - \mat_q((m - 1) \times (n - 2), S
, r)$ ways, and by Proposition~\ref{prop:rank-nullity} we may extend each of these to an $m \times n$ matrix of rank $r$ in $q^{r - 1}$ ways.
\item We may choose the first $m - 1$ rows so that they span a space of dimension $r - 1$, the last column is zero and the next-to-last column is nonzero in $\mat_q((m - 1) \times (n - 1), S
, r - 1) - \mat_q((m - 1) \times (n - 2), S
, r - 1)$ ways, and by Proposition~\ref{prop:rank-nullity} we may extend each of these to a matrix of rank $r$ in $q^{n - 2} - q^{r - 2}$ ways.
\item We may choose the first $m - 1$ rows so that they span a space of dimension $r$, the next-to-last column is zero and the last column is nonzero in $\mat_q((m - 1) \times (n - 1), S_1, r) - \mat_q((m - 1) \times (n - 2), S_1, r)$ ways, and by Proposition~\ref{prop:rank-nullity} we may extend each of these to a matrix of rank $r$ in $q^{r - 1}$ ways.
\item We may choose the first $m - 1$ rows so that they span a space of dimension $r - 1$, the next-to-last column is zero and the last column is nonzero in $\mat_q((m - 1) \times (n - 1), S_1, r - 1) - \mat_q((m - 1) \times (n - 2), S_1, r - 1)$ ways, and by Proposition~\ref{prop:rank-nullity} we may extend each of these to a matrix of rank $r$ in $q^{n - 2} - q^{r - 2}$ ways.
\item We may choose the first $m - 1$ rows so that they span a space of dimension $r$ and the last two columns are nonzero and parallel in $(q - 1)(\mat_q((m - 1) \times (n - 1), S_2, r) - \mat_q((m - 1) \times (n - 2), S_2, r))$ ways, and by Proposition~\ref{prop:rank-nullity} we may extend each of these to a matrix of rank $r$ in $q^{r - 1}$ ways.
\item We may choose the first $m - 1$ rows so that they span a space of dimension $r$ and the last two columns are nonzero and parallel in $(q - 1)(\mat_q((m - 1) \times (n - 1), S_2, r - 1) - \mat_q((m - 1) \times (n - 2), S_2, r - 1))$ ways, and by Proposition~\ref{prop:rank-nullity} we may extend each of these to a matrix of rank $r$ in $q^{n - 2} - q^{r - 2}$ ways.
\end{enumerate}

\item We count matrices in which the last two columns span a space of dimension $2$.  To compute the number of these in which the first $m - 1$ rows span a space of dimension $r$, subtract from $\mat_q((m - 1) \times n, S, r)$ the number of matrices in which the last two columns span a space of dimension less than $2$; this number is computed in cases (a), (b)i, (b)iii and (b)v above.  By Proposition~\ref{prop:rank-nullity}, we may extend each of these to a matrix of rank $r$ in $q^{r - 2}$ ways.  Alternatively, the first $m - 1$ rows may span a space of dimension $r - 1$, and the number of ways in which this happens is the result of subtracting the appropriate values computed in cases (a), (b)ii, (b)iv and (b)vi above.  By Proposition~\ref{prop:rank-nullity}, we may extend each of these to a matrix of rank $r$ in $q^{n - 2} - q^{r - 3}$ ways.
\end{enumerate}

As before, every instance of the function $\mat_q$ in each subcase is applied on matrices of size strictly smaller than $m \times n$.  (We omit the large, uninformative expression that is the total contribution from the cases above.)
\end{compactenum}
Finally, it's easy to check that these cases are exhaustive and that each yields an application of $\mat_q$ on matrices of size smaller than $m \times n$, and that they are combined with coefficients that are polynomials in $q$, as desired.
\end{proof}

\subsubsection{Reductions of this sort can't work if $S$ has three entries per row}\label{sec:obstruction}

On first glance, it appears that the method of proof of Theorem~\ref{thm:two zeroes} can be extended to show that the function $\mat_q(m \times n, S, r)$ is a polynomial in $q$ for any choice of $m$, $n$, $S$ and $r$.  However, as the example of Stembridge \cite{Stem} (see Figure~\ref{fig:Fano}) shows, this is not the case.  In this section, we briefly explain why this recursive approach breaks down for matrices with three or more zeroes per row.

Suppose we attempt to recurse as in the proof of Theorem~\ref{thm:two zeroes} while removing a row with three forced zero entries, e.g., for the set $S$ such that 
\[
\begin{bmatrix}
0     & a_{12} & a_{13} & a_{14} & 0     & 0     \\
0     & a_{22} & a_{23} & 0     & a_{25} & 0     \\
a_{31} & 0     & a_{33} & 0     & 0     & a_{36} \\
a_{41} & a_{42} & 0     & 0     & 0     & a_{46} \\
a_{51} & 0     & 0     & a_{54} & a_{55} & 0     \\
0     & 0     & 0     & a_{64} & a_{65} & a_{66}
\end{bmatrix}
\]
is a representative matrix counted in $\mat_q(6,S,r)$.
For instance, let's remove the bottom row.  Then we wish to count the number of matrices of the form
\[
M' = \begin{bmatrix}
0     & a_{12} & a_{13} & a_{14} & 0     & 0     \\
0     & a_{22} & a_{23} & 0     & a_{25} & 0     \\
a_{31} & 0     & a_{33} & 0     & 0     & a_{36} \\
a_{41} & a_{42} & 0     & 0     & 0     & a_{46} \\
a_{51} & 0     & 0     & a_{54} & a_{55} & 0    
\end{bmatrix}
\]
of each rank, refined by the dimension of the space spanned by the first three columns.  As in Case 3(b) of the proof of Theorem~\ref{thm:two zeroes}, we further refine by the geometric relationship between these columns; it is straightforward to check that if these columns span a space of dimension $0$ or $1$, or if two of them are linearly dependent, then the induction goes through.  The only remaining case is that they span a space of dimension $2$ and are pairwise linearly independent.\footnote{The case that they are independent will simply be the complement of all other cases, so it is nonpolynomial in a minimal example if and only if our selected case is nonpolynomial.}  In this case, we may take an appropriate linear combination of the second and third columns to eliminate the first column, in which case our problem appears at first glance to reduce to the problem of counting matrices of the form
\[
M'' = 
\begin{bmatrix}
a_{12} & a_{13} & a_{14} & 0     & 0     \\
a_{22} & a_{23} & 0     & a_{25} & 0     \\
0     & a_{33} & 0     & 0     & a_{36} \\
a_{42} & 0     & 0     & 0     & a_{46} \\
0     & 0     & a_{54} & a_{55} & 0
\end{bmatrix}
\]
in which the first two columns are linearly independent; we have already shown (in Case 3(c) of the proof of Theorem~\ref{thm:two zeroes}) that under an appropriate inductive hypothesis this yields a polynomial answer in $q$.  The ``catch'' is that the apparently valid reduction is actually wrong: in our particular example, the zeroes in positions $(1, 1)$ and $(2, 1)$ of $M'$, together with the elimination step, impose the condition that the upper-left $2 \times 2$ minor of $M''$ is $0$.  This condition cannot be reduced to the requirement that certain entries be equal to $0$, so any inductive approach of this nature would have to take a substantially stronger inductive hypothesis that allows us to consider restrictions on larger minors.  While this more general question seems potentially interesting, it is quite broad and it seems likely that polynomiality will only hold under very restrictive conditions on the set of selected minors.

\subsubsection{Reduction when $\overline{S}$ has at most two entries in a row}

In this section, we show the complementary result of Theorem~\ref{thm:two zeroes} and give a reduction for $\mat_q(m \times n, S, r)$ when $\overline{S}$ contains at most two entries in some row.

\begin{theorem}\label{thm:two free entries}
Suppose that $S \subseteq [m] \times [n]$ and that $\overline{S}$ contains at most two entries in the $m$th row.  Then $\mat_q(m \times n, S, r)$ is equal to a linear combination of similar expressions for smaller matrices with coefficients in $\ZZ[q]$.
\end{theorem}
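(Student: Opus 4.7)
My plan is to mirror the proof of Theorem~\ref{thm:two zeroes}, case-splitting on the number $k$ of free positions (elements of $\overline{S}$) in the $m$th row; by hypothesis $k \in \{0, 1, 2\}$.

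For $k = 0$, the $m$th row vanishes in every counted matrix, yielding $\mat_q(m \times n, S, r) = \mat_q((m-1) \times n, S, r)$. For $k = 1$, WLOG the free entry is at $(m, n)$, so the last row has the form $(0, \ldots, 0, w_n)$; splitting on $w_n = 0$ versus $w_n \neq 0$ and using (a column-permuted variant of) Proposition~\ref{prop:rank-nullity} to show that, for $w_n \neq 0$, the row $(0, \ldots, 0, w_n)$ lies in the row span of the top $(m-1) \times n$ submatrix iff the first $n-1$ columns of that submatrix have rank strictly less than its full rank, I obtain
\[ \mat_q(m \times n, S, r) = \mat_q((m-1) \times n, S, r) + (q - 1)\, q^{(m-1) - s_n}\, \mat_q((m-1) \times (n-1), S, r - 1),\]
where $s_n = |S \cap ([m-1] \times \{n\})|$, a reduction to strictly smaller instances.

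For $k = 2$, WLOG at positions $(m, n-1)$ and $(m, n)$, the last row has the form $(0, \ldots, 0, w_{n-1}, w_n)$. I partition into four subcases based on whether each of $w_{n-1}, w_n$ is zero. The three subcases with at least one zero each reduce immediately via Cases $k = 0$ and $k = 1$ applied to one of the augmented sets $S \cup \{(m, n-1)\}$, $S \cup \{(m, n)\}$, or $S \cup \{(m, n-1), (m, n)\}$ (all with fewer free entries in row $m$). For the remaining both-nonzero subcase, scaling columns $n-1$ and $n$ by nonzero field elements is a rank- and support-preserving bijection, reducing the count to $(q - 1)^2$ times the number $N$ of matrices with last row exactly $(0, \ldots, 0, 1, 1)$. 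I would then apply the rank-preserving column operation $\mathrm{col}_n \leftarrow \mathrm{col}_n - \mathrm{col}_{n-1}$, which transforms the last row into the Case $k = 1$ shape $(0, \ldots, 0, 1, 0)$ but introduces linear linking constraints $a'_{i, n} = -a'_{i, n-1}$ at the rows $i$ in $B := \{i \leq m-1 : (i, n-1) \notin S,\ (i, n) \in S\}$ (at those rows the original constraint $(i, n) \in S$ translates to a relation between the transformed entries). Inclusion-exclusion over subsets of $B$, conditioning on which linked entries in column $n-1$ are zero, expresses $N$ as an alternating sum of Case $k = 1$ counts for $S$ augmented by subsets of $\{(i, n-1) : i \in B\}$; each such count further reduces to $\mat_q$ over matrices of size $(m-1) \times n$ and $(m-1) \times (n-1)$ by the Case $k = 1$ formula above.

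The main obstacle is the bookkeeping in the both-nonzero subcase of $k = 2$: the linking constraints left behind by the column operation must be unwound via inclusion-exclusion on the ``problematic'' set $B$, and the resulting alternating sum of $\mat_q$ values with augmented support sets must be tracked carefully to produce coefficients in $\ZZ[q]$. This parallels the elaborate geometric subcasing of Case 3(b) in the proof of Theorem~\ref{thm:two zeroes}, where the rank relationship between the last two columns similarly required the most delicate part of the argument.
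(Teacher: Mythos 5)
Your overall architecture matches the paper's: split on the number $k\in\{0,1,2\}$ of free entries in row $m$, and your $k=0$ and $k=1$ reductions, as well as the first three subcases of $k=2$, agree with the paper's proof (the paper derives the $k=1$ formula by row-reducing with the pivot at $(m,n)$ rather than by your row-span argument, but the resulting count $(q-1)q^{a}\mat_q((m-1)\times(n-1),S,r-1)$ is the same). The genuine gap is in the both-nonzero subcase of $k=2$. After your column operation $\mathrm{col}_n\leftarrow\mathrm{col}_n-\mathrm{col}_{n-1}$, each row $i\in B$ carries the linking constraint $a'_{i,n}=-a'_{i,n-1}$, and this cannot be unwound by inclusion--exclusion over which linked entries vanish. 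The indicator of the line $\{(s,-s)\}\subseteq\F^2$ is not a $\ZZ$-linear combination of the indicators of $\F^2$, $\{0\}\times\F$, $\F\times\{0\}$, and $\{(0,0)\}$: any such combination takes equal values at $(1,-1)$ and $(1,1)$, which are distinct points once $q>2$. Concretely, for every $Z\subseteq B$ in your alternating sum, the rows of $B\setminus Z$ still have two nonzero, linearly linked entries, and the number of rank-$r$ matrices subject to such a linear relation between two entries is not any of the support-avoidance quantities $\mat_q(\cdot\,,S\cup(\text{subset}),\cdot)$ you are summing. So the recursion does not close up.

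The paper sidesteps this entirely by using \emph{row} operations instead of a column operation: with last row $(0,\ldots,0,\alpha,\beta)$, $\alpha,\beta\neq 0$, subtract $(a_{i,n}/\beta)$ times row $m$ from each row $i<m$. This clears column $n$ above row $m$ and replaces $a_{i,n-1}$ by $a_{i,n-1}-(\alpha/\beta)a_{i,n}$; since each row is modified using only its own two entries in columns $n-1,n$ (the last row vanishes elsewhere), the support conditions merge cleanly rather than becoming linked: the new $(i,n-1)$ entry is forced to zero if and only if both $(i,n-1)$ and $(i,n)$ lie in $S$, is free otherwise, and the map on entries is $q$-to-one exactly on the rows where both were free. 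This gives the single closed term $(q-1)^2 q^{d}\,\mat_q((m-1)\times(n-1),S'',r-1)$ with no inclusion--exclusion, where $S''$ merges the last two columns of $S$ and $d$ counts the rows with neither entry in $S$. Your argument is repaired by replacing the column operation with this row reduction (or, more laboriously, by redoing the subcase via the rank-of-the-last-two-columns analysis used in Case 3 of Theorem~\ref{thm:two zeroes}).
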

\begin{proof}
As before, we split into cases depending on the number of entries of $\overline{S}$ in the $m$th row.
\begin{compactenum}[1.]
\item[Case 1.] The set $\overline{S}$ contains no entries in the $m$th row.  In this case the $m$th row is all zero and we can simply remove it, so $\mat_q(m \times n, S, r) = \mat_q((m - 1) \times n, S, r)$.

\item[Case 2.] The set $\overline{S}$ contains one entry in the $m$th row, without loss of generality the entry $(m, n)$.  In this case, the matrices of rank $r$ with zeroes at the positions marked by $S$ fall into two categories: those for which the $(m, n)$ entry is also $0$, of which there are $\mat_q((m - 1) \times n, S, r)$, and those for which the $(m, n)$ entry is nonzero.  In the latter case we may row-reduce, using the $m$th row to eliminate the $n$th column, and we find that there are $(q - 1)q^a \mat_q((m - 1) \times (n- 1), S, r - 1)$ such matrices, where $a$ is the number of entries of $\overline{S}$ among $\{(1, n), \ldots, (m - 1, n)\}$.  Thus in total we have 
\[
\mat_q((m - 1) \times n, S, r) + (q - 1)q^a \mat_q((m - 1) \times (n- 1), S, r - 1)
\]
matrices in this case.

\item[Case 3.] Suppose that the last row of $\overline{S}$ contains two entries in the $m$th row, without loss of generality the entries $(m, n-1)$ and $(m, n)$.  We count matrices $M$ of rank $r$ whose entries at positions marked by $S$ are equal to $0$, refining by whether the entries $(m, n - 1)$ and $(m, n)$ are also equal to $0$.
\begin{enumerate}
\item If both entries are zero, the number of matrices is just $\mat_q((m - 1) \times n, S, r)$.
\item If the $(m, n)$ entry is nonzero and the $(m, n - 1)$ entry is zero the we are essentially in Case 2 above: the entry $(m, n)$ may be chosen in $(q - 1)$ ways, and we may use the $m$th row to eliminate the $n$th column, which gives a factor of $q^b$, where $b$ is the number of entries of $\overline{S}$ among $\{(1, n), \ldots, (m - 1, n)\}$.  The rest of the matrix may be filled in in $\mat_q((m - 1) \times (n - 1), S, r - 1)$ ways.  So in total we have a contribution of $(q - 1)q^b \mat_q((m - 1) \times (n - 1), S, r - 1)$ in this case.
\item Similarly, if the $(m, n - 1)$ entry is nonzero and the $(m, n)$ entry is zero, we may apply the same technique to eliminate the $(n - 1)$th column, etc.  If we set $c$ to be the number of entries of $\overline{S}$ in $\{(1, n - 1), \ldots, (m - 1, n - 1)\}$ and let $S'$ be the result of removing the $(n - 1)$th column from $S$ and shifting every entry in the $n$th column left, then we have in this case a contribution of $(q - 1)q^c \mat_q((m - 1) \times (n - 1), S', r - 1)$ matrices.
\item Finally, if both the $(m, n - 1)$ and $(m, n)$ entries are nonzero (which may happen in $(q - 1)^2$ ways), we use $(m, n)$ entry to kill nonzero entries (i.e., entries of $\overline{S}$) in the $n$th column.  In this case, nonzero entries get ``transferred'' from the $n$th column to the $(n-1)$th; we pick up an extra factor of $q$ every time there is already an entry of $\overline{S}$ in same row in the $(n - 1)$th column.  Let $S''$ is the set that we get by removing the $(n - 1)$th and $n$th columns from $S$ and adding a new column that has an entry in row $i$ whenever $\{(i, n - 1), (i, n)\} \subseteq S$, and let $d$ be the number of $i \in [m - 1]$ such that neither $(i, n - 1)$ nor $(i, n)$ is in $S$.  In this case, we have a total contribution of $(q - 1)^2 q^d \mat_q((m - 1) \times(n - 1), S'', r - 1)$ matrices.
\end{enumerate}
The total number of matrices in this case is the sum of the expressions in the four subcases.
\end{compactenum}
Each of the expressions appearing above yields a linear combination of applications of $\mat_q$ on matrices of size smaller than $m \times n$, and the coefficients are polynomials in $q$, as desired.
\end{proof}

This result cannot be extended to the case of three nonzero entries in some row, again by the example of \cite{Stem} (see Figure~\ref{fig:Fano}); attempts to follow the same method of proof as in Theorem~\ref{thm:two free entries} meet an obstruction similar to that of Section~\ref{sec:obstruction}.

\begin{corollary}
If $m$, $n$ and $S$ are chosen minimal (in the sense of row- or column-removal) so that $\mat_q(m \times n, S, r)$ is not polynomial in $q$ then $\overline{S}$ contains at least three entries in each row and column.
\end{corollary}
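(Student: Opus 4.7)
The plan is to derive this corollary directly from Theorem~\ref{thm:two free entries} (together with the obvious symmetric column version) by a contrapositive/minimality argument. Suppose that $(m, n, S)$ is minimal with respect to row- and column-removal among triples such that $\mat_q(m \times n, S, r)$ is not a polynomial in $q$. Here ``minimal'' means: for every proper submatrix obtained by deleting some subset of rows and columns, and for the corresponding induced restriction of $S$, the resulting function is a polynomial in $q$.

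Assume for contradiction that some row of $\overline{S}$ contains at most two entries. Since $\mat_q(m \times n, S, r)$ is invariant under permuting the rows and columns of the ambient grid, we may assume that this row is the $m$th. Then Theorem~\ref{thm:two free entries} applies and expresses $\mat_q(m \times n, S, r)$ as a $\ZZ[q]$-linear combination of values of $\mat_q(m' \times n', S', r')$ for triples with $m' < m$ or $n' < n$ (inspection of the three cases of the proof shows every term involves a matrix of strictly smaller size). By the minimality hypothesis each of these smaller values is a polynomial in $q$, so the linear combination is as well, contradicting the non-polynomiality of $\mat_q(m \times n, S, r)$.

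The same argument applied to columns—either by invoking the transpose symmetry $\mat_q(m \times n, S, r) = \mat_q(n \times m, S^{t}, r)$ and then using Theorem~\ref{thm:two free entries}, or equivalently by noting that Theorem~\ref{thm:two free entries} has an obvious column analogue with an identical proof—shows that $\overline{S}$ must also contain at least three entries in every column. Combining the two cases yields the corollary.

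There is no genuine obstacle here: the corollary is essentially a restatement of Theorem~\ref{thm:two free entries} under the hypothesis of minimality. The only thing worth double-checking is that every $\mat_q$-term appearing in the reduction of Theorem~\ref{thm:two free entries} is genuinely applied to a matrix of smaller dimensions (so that the minimality hypothesis can be invoked on each term), but this is immediate from the case analysis in that proof, where every instance of $\mat_q$ on the right-hand side has either $m - 1$ rows or $n - 1$ columns.
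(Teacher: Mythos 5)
Your argument is correct and is exactly the deduction the paper intends: the corollary is stated as an immediate consequence of Theorem~\ref{thm:two free entries}, and your contrapositive/minimality argument (using row/column permutation invariance and the transpose symmetry for the column case) is the same reasoning. Nothing further is needed.
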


Theorems~\ref{thm:two zeroes} and~\ref{thm:two free entries} allow for the efficient recursive computation of $\mat_q(m \times n, S, r)$ for a large class of sets $S$ when $m$ and $n$ are small (though characterizing precisely which sets $S$ seems hard). The code implemented in \cite{Code} includes these recursions. Unfortunately, for moderately large $m$ and $n$, most sets $S \subseteq [m] \times [n]$ do not admit reductions by our theorems. 

\section{Formula for \texorpdfstring{$\mat_q(n,\overline{B},r)$}{mat(q;n,complement(B),r)} when \texorpdfstring{$B$}{B} has {\sf NE} Property}\label{sec:NE}

In \cite{Hag}, Haglund proved the following result.

\begin{theorem}[{\cite[Thm. 1]{Hag}}]  \label{jh}
For every straight shape $S_{\lambda}$ we have  
\[
\mat_q(n,\overline{S_{\lambda}},r) = (q-1)^r q^{|\lambda|-r}R_r^{\sf (SE)}(S_{\lambda},q^{-1}).
\]
\end{theorem}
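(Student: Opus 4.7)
The plan is to give a bijective counting argument that associates to each rank-$r$ matrix $M$ over $\F$ with support contained in $S_\lambda$ a canonical rook placement $C(M)$ of $r$ non-attacking rooks on $S_\lambda$, and then count the preimage of each rook placement. Summing the preimage sizes over all placements will yield the formula.

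To define $C(M)$, perform Gaussian elimination while scanning the cells of $S_\lambda$ in a carefully chosen reading order, selected so that the resulting statistic on the pivot placement matches $\inv_{\sf SE}$. At each cell $(i,j)$, test whether the value of the entry after reducing by the pivots placed so far is nonzero. If so, declare $(i,j)$ to be a new pivot, add it to $C(M)$, and perform a Schur-complement-style elimination to clear the affected entries east of $(i,j)$ in row $i$ and south of $(i,j)$ in column $j$. Since $M$ has rank $r$, exactly $r$ pivots will be identified; they lie in distinct rows and columns and entirely within $S_\lambda$, so $C(M)$ is a valid $r$-rook placement on $S_\lambda$.

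For the fiber of the map $M \mapsto C(M)$ over a fixed rook placement $C$, I claim that a matrix $M$ with $C(M) = C$ corresponds to independent choices of a nonzero value at each of the $r$ rook cells (factor $(q-1)^r$) and an arbitrary value in $\F$ at each cell of $S_\lambda$ that is strictly south of some rook in its column or strictly east of some rook in its row (the cells to which the elimination transmits degrees of freedom). By the definition of $\inv_{\sf SE}(C, S_\lambda)$, there are exactly $|\lambda| - r - \inv_{\sf SE}(C, S_\lambda)$ such cells, giving the factor $q^{|\lambda| - r - \inv_{\sf SE}(C, S_\lambda)}$. The remaining non-rook cells --- those not south or east of any rook --- are forced to be zero by the elimination procedure.

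The main obstacle I anticipate is identifying the correct scanning order for the Gaussian elimination, and then verifying that the preimage count really does decompose into the rook, free, and forced cell contributions described above. The {\sf NE} Property of the straight shape $S_\lambda$ is essential here: it guarantees that the order can be chosen so that forced cells always lie to the northwest of the pivots responsible for forcing them, enabling consistent elimination without conflict between pivots. Summing the preimage size $(q-1)^r q^{|\lambda|-r-\inv_{\sf SE}(C, S_\lambda)}$ over all rook placements $C$ of $r$ rooks on $S_\lambda$ then gives $(q-1)^r q^{|\lambda|-r} R_r^{\sf (SE)}(S_\lambda, q^{-1})$, as required. The structural role played by the {\sf NE} Property here is exactly what suggests the generalization pursued in the rest of Section~\ref{sec:NE}.
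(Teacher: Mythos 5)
There is a genuine gap here, and it is directional. The elimination-plus-fiber-counting strategy you describe is the right technique in spirit --- it is exactly how the paper proves the generalization, Theorem~\ref{NEthm} (the paper does not reprove Theorem~\ref{jh} itself; it cites Haglund and only recovers the {\sf SE} statement for straight shapes afterwards, via Corollary~\ref{SENErook}). But your elimination clears to the \emph{south} and east of each pivot, and on the top-left-justified board $S_\lambda$ this cannot be done while keeping the support inside $S_\lambda$: to kill an entry $(i'',j)$ with $i''>i$ you must add a multiple of row $i$ to the (generally shorter) row $i''$, which creates nonzero entries in columns $\lambda_{i''}<j''\le\lambda_i$ lying outside the board. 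Concretely, take $\lambda=(2,1)$ and $M=\left[\begin{smallmatrix} a & b\\ c& 0\end{smallmatrix}\right]$ with $a,b,c\neq 0$: if $(1,1)$ is taken as a pivot and used to clear $(2,1)$, then the entry $(2,2)\notin S_{\lambda}$ becomes $-bc/a\neq 0$, and the second pivot of this rank-$2$ matrix lands off the board, so $C(M)$ is not a rook placement on $S_\lambda$. No reading order repairs this; the obstruction is the geometry of the board. Your appeal to the {\sf NE} Property is exactly backwards: that property is the closure condition licensing \emph{north}-and-east clearing, while the corresponding ``SE'' closure condition fails for straight shapes (e.g.\ $(1,1),(2,1),(1,2)\in S_{(2,1)}$ but $(2,2)\notin S_{(2,1)}$).

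The same example shows your fiber count is quantitatively wrong: for $\lambda=(2,1)$ and $r=2$, the unique placement $C=\{(1,2),(2,1)\}$ has no cell of $S_\lambda$ strictly south of a rook in its column or strictly east of a rook in its row, so your decomposition predicts a fiber of size $(q-1)^2$, whereas the number of rank-$2$ matrices supported on $S_{(2,1)}$ is $(q-1)^2q$ (the $(1,1)$ entry is free). The elimination that does work on $S_\lambda$ --- take as pivot the southmost unused nonzero entry of each column, clear north by row operations (short row added into long row), then clear east by column operations once the column holds only its pivot --- is precisely the proof of Theorem~\ref{NEthm}, and the statistic it produces is $\inv_{\sf NE}$, not $\inv_{\sf SE}$. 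To reach the stated {\sf SE} form you must either reorient the board as Garsia--Remmel and Haglund do (so that south-and-east clearing is the direction that stays on the board) or separately prove $R_r^{\sf (NE)}(S_{\lambda},q)=R_r^{\sf (SE)}(S_{\lambda},q)$ for straight shapes; that identity is not automatic (Remark~\ref{SENE} shows it fails for skew shapes), and the paper itself only obtains it by comparing Theorem~\ref{NEthm} with Haglund's result. I recommend proving the {\sf NE} version directly and treating the {\sf NE}/{\sf SE} comparison as a separate combinatorial lemma.
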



We now extend this result (using the same proof technique) to all shapes with the {\sf NE} Property, that is, with the property that for any $i' < i$, $j < j'$, if $(i, j)$, $(i', j)$ and $(i, j')$ belong to $B$, then $(i', j')$ does as well.

\begin{theorem} \label{NEthm}
Fix any $n$ and $r$ and any set $B \subseteq [n] \times [n]$ with the {\sf NE} Property.  The number of $n\times n$ matrices over $\F$ of rank $r$ whose support is contained in $B$ is 
\begin{equation} \label{NEcase}
\mat_q(n,\overline{B},r) = (q-1)^r q^{\#B-r}R_r^{\sf (NE)}(B,q^{-1}). 
\end{equation}
\end{theorem}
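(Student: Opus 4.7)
The plan is to imitate Haglund's bijective proof of Theorem~\ref{jh} by running a Gaussian elimination algorithm on each rank-$r$ matrix $A$ with support in $B$. Process the rows from bottom to top: on reaching row $i$, first subtract appropriate multiples of the already-processed rows $i+1,\ldots,n$ to kill the entries of row $i$ that lie in their pivot columns; then (if the resulting reduced row is nonzero) declare the leftmost nonzero position $(i,j_i)$ to be a new pivot, and finally use that pivot to clear the entries above it in column $j_i$. The pivots produced in this way form a non-attacking rook placement $C$ on $B$ of size $r$, which decomposes the count $\mat_q(n,\overline{B},r)$ into a sum over such placements.

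The NE Property is exactly the hypothesis needed to guarantee that these row operations preserve support in $B$. When I subtract a multiple of row $i$ from an upper row $i'$ to kill $(i',j_i)$, the only columns $k$ at which row $i'$ is actually modified are those where the current state of row $i$ is nonzero, so $k \geq j_i$ with $(i,k) \in B$. Checking that $(i',k) \in B$ amounts to invoking the NE Property on the three cells $(i,j_i), (i,k), (i',j_i) \in B$ (with $i'<i$ and $j_i \leq k$), whose conclusion is exactly $(i',k) \in B$.

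For each fixed rook placement $C$, I would count the matrices $A$ assigned to $C$ via the bijection obtained by reversing the algorithm. The data parametrizing $A$ consists of: the $r$ pivot values of the reduced matrix (each nonzero, giving $(q-1)^r$ in total), the original entries of $A$ at the cells strictly above some pivot in its column (the ``killed'' cells), and the reduced-matrix values at the cells of $B$ that lie in a pivot row east of that row's pivot but are not themselves killed (the ``free'' cells). A positional case analysis partitions $B$ into four types---pivots, killed, free, and forced-zero cells---and identifies the forced-zero cells with the cells of $B$ not NE-canceled by any rook of $C$, i.e., with the NE inversions. Hence the number of matrices assigned to $C$ equals $(q-1)^r q^{\#B-r-\inv_{\sf NE}(C,B)}$, and summing over $C$ produces the formula \eqref{NEcase}.

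The main obstacle is this case analysis: in each positional subcase I must confirm that ``$(i,k)$ is not NE-canceled by $C$'' is equivalent to ``$A$ must have $(i,k)$-entry equal to zero in order for the algorithm to output $C$,'' and that the killed and free parameters are genuinely independent (this follows from the invertibility of the elimination steps). This matching of the algorithm's algebraic constraints with the statistic $\inv_{\sf NE}(C,B)$ is the crux that converts the cell-by-cell factor count into the closed-form expression on the right-hand side of \eqref{NEcase}.
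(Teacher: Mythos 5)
Your proposal is correct and follows essentially the same route as the paper: Gaussian elimination reducing each matrix with support in $B$ to a non-attacking rook placement, the {\sf NE} Property invoked in exactly the same way to show the support stays inside $B$ at every step, and the fiber over a placement $C$ counted as $(q-1)^r q^{\#B-r-\inv_{\sf NE}(C,B)}$. The differences are cosmetic --- the paper eliminates column-by-column (bottom to top, left to right) using both row and column operations so the fully reduced matrix is literally the rook placement, whereas you eliminate row-by-row with row operations only --- apart from one small imprecision you should fix: at a cell of $B$ not NE-canceled by $C$ it is the \emph{reduced} matrix, not the original $A$, whose entry is forced to vanish (the corresponding entry of $A$ is merely determined by the free parameters), though this does not affect the factor count.
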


\begin{proof}
Choose a matrix 
  $A$ counted in $\mat_q(n, \overline{B},r)$, that is, whose support 
  is in $B$, and perform Gaussian elimination in the following (north-east)
  order: traverse each column from bottom to top, starting with the leftmost (i.e., first) column. 
  When you come to a nonzero entry (i.e., a pivot), use it to eliminate the 
  entries to its north in the same column and to its east in the same row.  See Figure~\ref{NEelim} for an example of this stage of the elimination process.
  Then move on to the next column and repeat until there is at most one nonzero entry in every row and column. 

By the {\sf NE} Property, at each stage of the elimination process just described we obtain another matrix counted in $\mat_q(n,
  \overline{B},r)$. After elimination, the positions of the pivots are a placement of $r$ non-attacking rooks on $B$.

\begin{figure}
\begin{center}
\includegraphics[height=3.8cm]{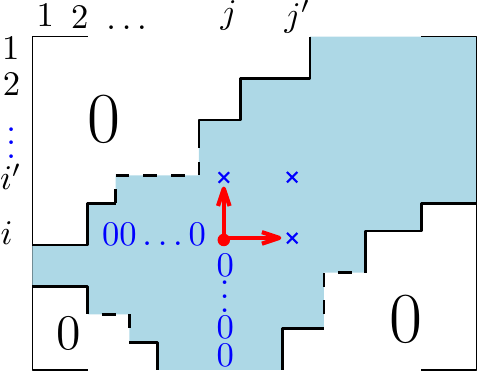}
\caption{NE elimination on a representative matrix counted in $\mat_q(n, \overline{B},r)$ with a pivot on $(i,j)$ where $B$ has the {\sf NE} Property.}
\label{NEelim}
\end{center}
\end{figure}

Given a fixed placement of $r$ non-attacking rooks on $B$, let $a$ be the number of cells in $B$ that are directly north or directly east of a rook. There are $(q-1)^rq^a$ matrices of rank $r$ whose support is in $B$ that give this placement after the elimination procedure described above. It is not hard to see that $a=\#B-r-\inv_{\sf NE}(C,B)$.  
Thus, summing over all placements or $r$ non-attacking rooks, we obtain
\[
\mat_q(n,\overline{B},r)=(q-1)^rq^{\#B-r}\sum_C(q^{-1})^{\inv_{\sf NE}(C,B)} =(q-1)^rq^{\#B-r}R_r^{\sf(NE)}(B,q^{-1}),
\]
as desired.
\end{proof}

Note that {\em a priori} it is not clear that the expression on the right-hand side of Equation~\eqref{NEcase} is a polynomial. However, this expression is a polynomial for the following reason: for any rook placement, there cannot be any more inversions than there are empty cells without rooks in them.  There are $\#B$ cells unoccupied by zeros, and, of these, $r$ have rooks in them.  So the maximum value of $\inv_{\sf NE}(C,B)$ is $\#B-r$.  Since this is the power of $q$ at the beginning of the formula, there will not be any $q^{-1}$ terms, and $\mat_q(n,\overline{B},r)$ is a polynomial.

\begin{example}
For $n=4$ and $r=4$, the set $B=([4]\times [4]) \setminus \{(1,1),(3,4),(4,1),(4,3),(4,4)\}$ has the {\sf NE} Property (as in Figure~\ref{exNE}(a)) and there are three placements of four rooks on $B$ (as in Figure~\ref{exNE}(b)). The number of NE-inversions of these placements are $0,\,1$ and $1$ respectively. Thus 
\begin{align*}
\mat_q(4,\overline{B},4) &=(q-1)^4q^{11-4}(1+2q^{-1})\\ 
&= (q-1)^4(q^7+2q^6).
\end{align*}

\begin{figure}
\begin{center}
\includegraphics[height=3.3cm]{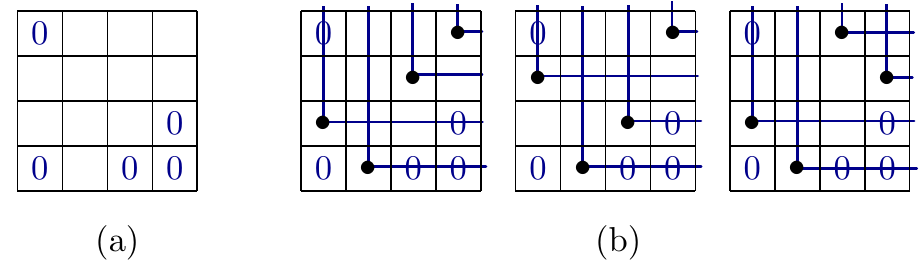}
\caption{(a) Set $B$ with the {\sf NE} Property. (b) Example of computing $\mat_q(n,\overline{B},r)$ when $B$ has the {\sf NE} Property. There are three placements of four rooks in $B$ with 0, 1 and 1  NE-inversions respectively. By Theorem~\ref{NEthm}, $\mat_q(4,\overline{B},4)=(q-1)^4q^{11-4}(1+2q^{-1})$.}
\label{exNE}
\end{center}
\end{figure}
\end{example}

We give two corollaries of Theorem~\ref{NEthm}.  First, since a straight shape $S_{\lambda}$ has the {\sf NE} Property, by comparing Haglund's result and Theorem~\ref{NEthm} we see that the {\sf (NE)} and {\sf (SE)} $q$-rook numbers of $S_{\lambda}$ agree.
\begin{corollary} \label{SENErook}
For any straight shape $S_{\lambda}$ we have
\[
R_r^{\sf (NE)}(S_{\lambda},q)= R_r^{\sf (SE)}(S_{\lambda},q).
\]
\end{corollary}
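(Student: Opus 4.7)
The plan is to simply compare the two formulas for $\mat_q(n,\overline{S_\lambda},r)$. First I would observe that every straight shape $S_\lambda$ has the {\sf NE} Property: if $(i,j),(i',j),(i,j') \in S_\lambda$ with $i' < i$ and $j < j'$, then $\lambda_{i'} \geq \lambda_i \geq j'$, so $(i',j') \in S_\lambda$. Thus Theorem~\ref{NEthm} applies with $B = S_\lambda$, and since $\#S_\lambda = |\lambda|$, it yields
\[
\mat_q(n,\overline{S_\lambda},r) = (q-1)^r q^{|\lambda|-r} R_r^{\sf (NE)}(S_\lambda, q^{-1}).
\]

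On the other hand, Haglund's Theorem~\ref{jh} gives the same left-hand side as $(q-1)^r q^{|\lambda|-r} R_r^{\sf (SE)}(S_\lambda, q^{-1})$. Setting the two right-hand sides equal and cancelling the common factor $(q-1)^r q^{|\lambda|-r}$ (which is a nonzero rational function of $q$), I obtain $R_r^{\sf (NE)}(S_\lambda, q^{-1}) = R_r^{\sf (SE)}(S_\lambda, q^{-1})$ as an identity of Laurent polynomials in $q$. Since both $R_r^{\sf (NE)}(S_\lambda, t)$ and $R_r^{\sf (SE)}(S_\lambda, t)$ are polynomials in a formal variable $t$, equality after the substitution $t = q^{-1}$ is equivalent to equality as polynomials, so substituting $t = q$ gives the claim.

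There is really no obstacle here: the corollary is an immediate consequence of having two different formulas for the same counting quantity, one in terms of each $q$-rook number. The only thing worth being careful about is the substitution step — ensuring we can legitimately pass between $R_r^{\sf (\bullet)}(S_\lambda, q^{-1})$ and $R_r^{\sf (\bullet)}(S_\lambda, q)$ — but this is routine since both sides are genuine polynomials in the indeterminate, and an identity that holds after any nonzero substitution of a formal variable holds identically.
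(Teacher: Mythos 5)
Your proof is correct and follows the same route as the paper: the corollary is obtained there exactly by noting that $S_\lambda$ has the {\sf NE} Property and comparing Theorem~\ref{jh} with Theorem~\ref{NEthm}. Your extra care about cancelling the prefactor and passing from the substitution $q^{-1}$ back to the formal variable is a welcome, if routine, elaboration of the same argument.
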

(Recall that in general the {\sf (NE)} and {\sf (SE)} $q$-rook numbers of a general board do not agree; see for example Remark~\ref{SENE}.)

Second, since any skew shape $S_{\lambda/\mu}$ has the {\sf NE} Property, we have the following corollary:
\begin{corollary} \label{skewcor}
For any skew shape $S_{\lambda/\mu}$,
\[\mat_q(n,\overline{S_{\lambda/\mu}},r)=(q-1)^r f(q),\]
where $f(q)$ is a polynomial with nonnegative integer coefficients.
\end{corollary}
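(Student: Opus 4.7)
The plan is to derive this as an immediate corollary of Theorem~\ref{NEthm}. The first step is to verify that every skew shape $S_{\lambda/\mu}$ satisfies the {\sf NE} Property; this observation is already recorded in Section~\ref{sec:def}, but the verification is short enough to be worth spelling out. Fix $i' < i$ and $j < j'$ and suppose that $(i,j)$, $(i',j)$, and $(i,j')$ all lie in $S_{\lambda/\mu}$. From $(i,j') \in S_{\lambda/\mu}$ we have $j' \le \lambda_i \le \lambda_{i'}$, so $(i',j') \in S_\lambda$; from $(i',j) \in S_{\lambda/\mu}$ we have $\mu_{i'} < j < j'$, so $(i',j') \notin S_\mu$. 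Hence $(i',j') \in S_{\lambda/\mu}$, as required.

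With the {\sf NE} Property in hand, Theorem~\ref{NEthm} applied to $B = S_{\lambda/\mu}$ yields
\[
\mat_q(n, \overline{S_{\lambda/\mu}}, r) \;=\; (q-1)^r \cdot q^{\#S_{\lambda/\mu} - r} \cdot R_r^{\sf (NE)}(S_{\lambda/\mu}, q^{-1}).
\]
Setting $f(q) := q^{\#S_{\lambda/\mu} - r} \cdot R_r^{\sf (NE)}(S_{\lambda/\mu}, q^{-1})$, it remains only to show that $f(q)$ is a polynomial in $q$ with nonnegative integer coefficients.

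This is essentially built into the definition of the {\sf NE} $q$-rook number. The polynomial $R_r^{\sf (NE)}(S_{\lambda/\mu}, q) = \sum_C q^{\inv_{\sf NE}(C, S_{\lambda/\mu})}$ is a sum of pure monomials indexed by rook placements, so it manifestly has nonnegative integer coefficients. The only thing to check is that the substitution $q \mapsto q^{-1}$ followed by multiplication by $q^{\#S_{\lambda/\mu} - r}$ does not introduce negative powers of $q$. But as noted in the paragraph following Theorem~\ref{NEthm}, each {\sf NE}-inversion of a placement $C$ of $r$ non-attacking rooks on $S_{\lambda/\mu}$ is witnessed by one of the $\#S_{\lambda/\mu} - r$ unoccupied cells, so $\inv_{\sf NE}(C, S_{\lambda/\mu}) \le \#S_{\lambda/\mu} - r$ and every monomial of $f(q)$ has nonnegative exponent. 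There is no real obstacle here: all the substantive work was done in Theorem~\ref{NEthm}, and the corollary reduces to packaging together the {\sf NE} Property for skew shapes with the manifest positivity of the {\sf NE} $q$-rook polynomial.
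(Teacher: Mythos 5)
Your proposal is correct and follows the paper's own route exactly: the paper also obtains this corollary by noting that skew shapes have the {\sf NE} Property and invoking Theorem~\ref{NEthm}, with the nonnegativity of $f(q)$ coming from the bound $\inv_{\sf NE}(C,B)\le \#B-r$ recorded in the paragraph after that theorem. Your explicit verification of the {\sf NE} Property for $S_{\lambda/\mu}$ just fills in a detail the paper leaves as an observation in Section~\ref{sec:def}.
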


\begin{example}
For $\lambda/\mu=4432/31$, we have
\begin{align*}
\mat_q(4\times 4,\overline{S_{4432/31}},3) & = (q-1)^3q^{9-3}(2q^{-1}+8q^{-1} + 7q^{-3} + q^{-4})\\
& = (q-1)^3 q^2(q+1)(2q^2+6q+1).
\end{align*}
In general, for skew shapes $S_{\lambda/\mu}$ there is no product formula for $\mat_q(n,\overline{S_{\lambda/\mu}},r)$ analogous to~\eqref{GRhooknum}, even when $r=n$.
\end{example}

\section{Studying \texorpdfstring{$\mat_q(n,S,r)$}{mat(q;n,S,r)} when \texorpdfstring{$S$}{S} is a Rothe diagram}\label{sec:rothe}

Given a permutation $w\in \mathfrak{S}_n$  written as a word $w=w_1w_2\cdots w_n$ where $w_i$ is the image of $w$ at $i$, the {\bf Rothe diagram} $R_w$ is the set
\[
R_w = \{(i,j) \mid 1\leq i, j\leq n,\,\, w(i)>j,\,\, w^{-1}(j)>i\}.
\]  
Equivalently $R_w$ is the set of elements in $[n]\times [n]$ that do not lie directly south or directly east of entries $(i,w_i)$ of the permutation matrix of $w$. See Figure~\ref{fig:exRothe} for some examples of Rothe diagrams. Note that $\#R_w$ is the number of inversions of $w$, that is, the number of pairs $(i,j)$ such that $i<j$ but $w_i>w_j$. Also, $R_w$ has the following property: if $(i,j)$ and $(k,\ell)$ are in $R_w$ and $i>k$, $j<\ell$ then the entry $(k, j)$ is also in $R_w$. We call this the {\bf Le property} of Rothe diagrams.\footnote{The name ``Le'' was invented in \cite[Sec.\ 6]{AP} in a context where the three entries in question formed a backwards letter ``L''; here we keep this terminology even though for $R_w$ the three entries form instead the letter ``$\Gamma$''.}

\begin{figure}
\[
\begin{array}{ccc}
w=41523 & w=21534 & w=31524\\
\left[ \begin {array}{ccccc} 0&0&0&\textcolor{red}{\underline{a_{{14}}}}&a_{{15}}
\\ \textcolor{red}{\underline{a_{{21}}}}&a_{{22}}&a_{{23}}&a_{{24}}&a_{{25}}
\\ a_{{31}}&0&0&a_{{34}}&\textcolor{red}{\underline{a_{{35}}}}
\\ a_{{41}}&\textcolor{red}{\underline{a_{{42}}}}&a_{{43}}&a_{{44}}&a_{{45}}
\\ a_{{51}}&a_{{52}}&\textcolor{red}{\underline{a_{{53}}}}&a_{{54}}&a_{{55}}
\end {array} \right]
&
\left[ \begin {array}{ccccc} 0&\textcolor{red}{\underline{a_{{12}}}}&a_{{13}}&a_{{14}}&a_{{15}
}\\ \textcolor{red}{\underline{a_{{21}}}}&a_{{22}}&a_{{23}}&a_{{24}}&a_{{25}
}\\ a_{{31}}&a_{{32}}&0&0&\textcolor{red}{\underline{a_{{35}}}}
\\ a_{{41}}&a_{{42}}&\textcolor{red}{\underline{a_{{43}}}}&a_{{44}}&a_{{45}}
\\ a_{{51}}&a_{{52}}&a_{{53}}&\textcolor{red}{\underline{a_{{54}}}}&a_{{55}}
\end {array} \right] 
&
\left[ \begin {array}{ccccc} 0&0&\textcolor{red}{\underline{a_{{13}}}}&a_{{14}}&a_{{15}}
\\ \textcolor{red}{\underline{a_{{21}}}}&a_{{22}}&a_{{23}}&a_{{24}}&a_{{25}}
\\ a_{{31}}&0&a_{{33}}&0&\textcolor{red}{\underline{a_{{35}}}}
\\ a_{{41}}&\textcolor{red}{\underline{a_{{42}}}}&a_{{43}}&a_{{44}}&a_{{45}}
\\ a_{{51}}&a_{{52}}&a_{{53}}&\textcolor{red}{\underline{a_{{54}}}}&a_{{55}}
\end {array} \right] 
\end{array}
\]
\caption{Representative matrices counted by $\mat_q(5,R_w,r)$ where $R_w$ is a Rothe diagram and $w$ is (i) $41523$ (vexillary), (ii) $21534$ (skew-vexillary), (iii) $w=31524$ (not skew-vexillary). The entries $a_{i\,w_i}$ are in \textcolor{red}{\underline{red}}.}
\label{fig:exRothe}
\end{figure}

The main conjecture for Rothe diagrams, which has been verified for $n\leq 7$ using the results in Section~\ref{sec:general conditions} \cite{Code}, is the following:

\begin{conjecture} \label{conj:rothe}
If $R_w$ is the Rothe diagram of a permutation $w$ in $\mathfrak{S}_n$ and $0\leq r\leq n$ then $\mat_q(n,R_w,r)/(q-1)^r$ is a polynomial in $q$ with nonnegative integer coefficients.
\end{conjecture}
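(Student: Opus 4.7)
The plan is to adapt the Gaussian elimination argument of Theorems~\ref{jh} and~\ref{NEthm} to Rothe diagrams, exploiting the Le property of $R_w$ in place of the NE property. Since $\overline{R_w}$ does not generally satisfy the NE property --- as one sees for $w = 31524$ --- Theorem~\ref{NEthm} cannot be applied directly, so a new elimination order tailored to $w$ is required. One natural candidate is to pivot successively at the positions $(i, w(i))$, using row and column operations to reduce each counted matrix to a \emph{canonical} form supported on a small subset of $\overline{R_w}$, and then enumerate matrices by the number that reduce to each canonical form. The Le property --- if $(i, j), (k, \ell) \in R_w$ with $k < i$ and $j < \ell$, then $(k, j) \in R_w$ --- provides the combinatorial constraint on four-corner configurations in $\overline{R_w}$ that one would exploit to verify that the elimination preserves support.

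After all pivots are processed, for a rank-$r$ matrix only $r$ nonzero entries should remain, corresponding to a choice of $r$ of the $n$ pivot positions. The count of matrices reducing to each configuration should then split into a product of $(q-1)^r$ (for the free nonzero scalars at the active pivots) times a power of $q$ (for the free parameters along the eliminated rows and columns), yielding a total of $(q-1)^r$ times a polynomial with nonnegative integer coefficients. An alternative strategy is to proceed by induction using Theorems~\ref{thm:two zeroes} and~\ref{thm:two free entries}: because $(n, j) \in R_w$ requires $w^{-1}(j) > n$, the last row and column of every Rothe diagram are empty, so Case~1 of Theorem~\ref{thm:two zeroes} applies immediately and reduces the count to $\mat_q$-values on smaller matrices. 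One checks that the coefficients $q^r$ and $q^n - q^{r-1}$ arising from this reduction, after division by $(q-1)^r$, become $q^r$ and $q^{r-1}[n-r+1]_q$, respectively, so the inductive step itself preserves the desired positivity.

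The principal obstacle is that neither approach works out of the box. For the elimination, a direct south-clearing using the pivot $(i, w(i))$ can fail when $w(k) > w(i)$ for some $k > i$: for instance, with $w = 2143$, pivoting at $(1, 2)$ and eliminating the entry at $(3, 2)$ via row operations modifies the entry at $(3, 3) \in R_w$, which must remain zero. Any working elimination procedure must therefore carefully interleave row and column operations --- perhaps in an order dictated by $w^{-1}$, processing pivots in increasing order of $w(i)$ --- to avoid such collisions, and verifying correctness will require a nontrivial combinatorial argument using the Le property. For the inductive approach, the diagrams arising after row or column removal are no longer themselves Rothe diagrams, so an enlarged class of diagrams closed under the reductions of Section~\ref{sec:general conditions} must be identified; characterizing this class and proving the conjectural positivity throughout it seems to be the crux. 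Tools from the combinatorics of matrix Schubert varieties, bumpless pipe dreams, or the totally nonnegative Grassmannian (where the Le property originates) may suggest the right framework.
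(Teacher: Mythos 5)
This statement is Conjecture~\ref{conj:rothe}, and the paper does not prove it: it is stated as an open conjecture, verified by computer only for $n \le 7$, and established only in the special case of skew-vexillary permutations (Corollary~\ref{cor:skewvex}, via Corollary~\ref{skewcor} and Theorem~\ref{NEthm}). Your proposal is likewise not a proof; you concede as much when you write that ``neither approach works out of the box,'' and what you have written is a research plan with every hard step left open. Concretely: the elimination strategy requires an order of pivoting under which clearing a pivot's row and column never creates a nonzero entry inside $R_w$; your own example $w = 2143$ shows the naive order fails, no working order is exhibited, and since $\overline{R_w}$ lacks the {\sf NE} Property in general (e.g.\ $w = 31524$), Theorem~\ref{NEthm} genuinely does not apply. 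Moreover, pivoting ``at the positions $(i, w(i))$'' is not even well-defined for an arbitrary matrix counted by $\mat_q(n, R_w, r)$, since such a matrix need not be nonzero at those positions, and for $r < n$ one must also decide which $n - r$ pivots to abandon.

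The inductive strategy has the same status. It is true that the last row and column of $R_w$ are empty, so Case~1 of Theorem~\ref{thm:two zeroes} applies once, and the coefficients $q^r$ and $q^{r-1}[n-r+1]_q$ you compute do preserve positivity for that one step. But the diagram obtained after removing a row or column is in general no longer a Rothe diagram, and its rows may contain three or more entries of both $S$ and $\overline{S}$, at which point neither Theorem~\ref{thm:two zeroes} nor Theorem~\ref{thm:two free entries} applies; Section~\ref{sec:obstruction} explains why the recursion then runs into conditions on $2\times 2$ minors that cannot be encoded as support conditions. Even where the reductions do apply, they involve differences of $\mat_q$ values, so positivity of the pieces does not transfer to the combination without further argument. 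Identifying a class of diagrams containing all Rothe diagrams, closed under the reductions, on which positivity can be proved is exactly the open problem, not a step you can defer. In short, the proposal contains no proof of the conjecture --- and none exists in the paper to compare it against.
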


In Subsection~\ref{properties} we give properties of Rothe diagrams that help in calculating $\mat_q(n,R_w,r)$. 
In Subsection~\ref{skew} we study Conjecture~\ref{conj:rothe} for the family of permutations $w$ such that $R_w$ is the complement of a skew shape (after permuting rows and columns).  The conjecture holds for such permutations by Corollary~\ref{skewcor}. In Theorem~\ref{thm:indecomp}, we characterize these permutations.  


\subsection{Properties of \texorpdfstring{$\mat_q(n,S,r)$}{mat(q;n,S,r)} when \texorpdfstring{$S$}{S} is a Rothe diagram} \label{properties}

In this section we give some simple properties of $\mat_q(n,S,r)$ when $S=R_w$ is the Rothe diagram of a permutation $w$. These properties are useful to simplify the size of computations involved in empirically confirming conjectures about $\mat_q(n,R_w,r)$ like Conjectures~\ref{conj:rothe} and \ref{conj:PoinRothe}.

If the permutation $w$ is the word $w_1w_2\cdots w_n$, the {\bf reverse} of $w$ is the permutation $re(w)=w_nw_{n-1}\cdots w_1$. The {\bf complement} of $w$ is the permutation $c(w)=u_1u_2\cdots u_n$ where $u_i=n+1-i-w_i$. In addition, the {\bf reverse complement} of $w$ is the permutation $rc(w)=v_1v_2\cdots v_n$ where $v_i=n+1-w_{n+1-i}$. Lastly, the {\bf left-to-right maxima} of $w$ are the values $w_i$ such that $w_i > w_j$ for all $j$ such that $1\leq j < i$. 

\begin{proposition}
Given a permutation $w$ in $\mathfrak{S}_n$ and its Rothe diagram $R_w$, we have
\begin{compactenum} [\rm (i)]
\item $\mat_q(n,R_w,r)=\mat_q(n,R_{w^{-1}},r)$ and
\item $\mat_q(n, R_w,r) = \mat_q(n, R_{rc(w)},r)$.
\end{compactenum}
\end{proposition}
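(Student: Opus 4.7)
The plan is to deduce both identities from the invariance of $\mat_q(n,S,r)$ under matrix transposition and under permutation of the rows and columns of the ambient $[n]\times[n]$ grid. Each of these operations preserves rank and acts bijectively on the set of supports contained in $\overline{S}$, so applying any of them to $S$ leaves $\mat_q(n,S,r)$ unchanged.

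For (i), I would first check directly from the definition that $R_{w^{-1}}$ is the transpose of $R_w$. Indeed, $(a,b)\in R_{w^{-1}}$ unwinds to the conditions $w^{-1}(a)>b$ and $w(b)>a$, which are exactly the conditions characterizing $(b,a)\in R_w$. Thus $R_{w^{-1}}=\{(j,i):(i,j)\in R_w\}$, and transposing $n\times n$ matrices gives the desired rank- and support-preserving bijection between matrices counted by $\mat_q(n,R_w,r)$ and those counted by $\mat_q(n,R_{w^{-1}},r)$.

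For (ii), I plan to combine (i) with the further claim that $R_{rc(w)}$ is obtained from $R_{w^{-1}}$ by a pure permutation of rows and columns, without any additional transposition. Using the identities $rc(w)(i)=n+1-w(n+1-i)$ and $rc(w)^{-1}(j)=n+1-w^{-1}(n+1-j)$, substitution into the definition of the Rothe diagram gives
\[
R_{rc(w)}=\{(n+1-w^{-1}(b),\ n+1-w(a)) : (b,a)\in R_{w^{-1}}\},
\]
which exhibits $R_{rc(w)}$ as the image of $R_{w^{-1}}$ under the row permutation $b\mapsto n+1-w^{-1}(b)$ composed with the column permutation $a\mapsto n+1-w(a)$. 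Invoking (i) and the row/column invariance of $\mat_q$ then closes the argument. The only real obstacle here is keeping the indices straight in the computation of $R_{rc(w)}$; checking the formula on a small instance (e.g.\ $w=21534$, where $rc(w)=23154$) offers a useful sanity check before running the general calculation.
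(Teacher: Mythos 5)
Your proposal is correct and follows essentially the same route as the paper: part (i) via the observation that $R_{w^{-1}}$ is the transpose of $R_w$, and part (ii) via tracking how each element of $R_w$ (equivalently, each inversion of $w$) is carried to the element $(n+1-w^{-1}(j),\,n+1-w(i))$ of $R_{rc(w)}$, which realizes $R_{rc(w)}$ as a row-and-column rearrangement of the transpose of $R_w$. Your computation is a slightly more explicit version of the paper's argument, but the content is the same.
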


\begin{proof}
It is easy to see that for any permutation $w$, the diagram $R_{w^{-1}}$ is the transpose of $R_w$, and the first statement follows immediately. We now consider the second statement.

Fix a permutation $w$ with Rothe diagram $R_w$. Each element $(i,j)$ of $R_w$ corresponds to the inversion of $w$ formed by the entries with matrix coordinates $(i,w_i)$ and $(w^{-1}_j,j)$.  In $rc(w)$, these elements of $w$ are transformed to $(n + 1 - i, n + 1 - w_i)$ and $(n + 1 - w^{-1}_j, n + 1 - j)$ and still form an inversion; in $R_{rc(w)}$, this inversion corresponds to the element with coordinates $(n+1-w^{-1}_j,n+1-w_i)$. It follows immediately that the diagram $R_{rc(w)}$ is the result of taking the transpose of $R_w$, rearranging rows and columns by multiplying on both sides by the permutation matrix of $w$, and rotating the result by $180^{\circ}$.
\end{proof}

Next we characterize the indices of the rows and columns of $[n]\times [n]$ entirely contained in $\overline{R_w}$. This is useful for computation because it is easy to express $\mat_q(n,S,r)$ in terms of values of $\mat_q$ for sets obtained by removing  rows or columns that contain no elements of $S$.

\begin{proposition}\label{LRMcol}
The $k$th column (row) of $[n]\times [n]$ is contained in $\overline{R_w}$ if and only if $k$ is a left-to-right maximum of $w$ (of $w^{-1}$).
\end{proposition}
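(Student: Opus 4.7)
The plan is to unpack the definition of $R_w$ directly. Recall that $(i,j) \in R_w$ iff $w(i) > j$ and $w^{-1}(j) > i$, so $(i,j) \notin R_w$ iff $w(i) \leq j$ or $w^{-1}(j) \leq i$. I will treat the column statement first, and then derive the row statement using the fact that $R_{w^{-1}}$ is the transpose of $R_w$.

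For the column statement, fix $k$ and set $m = w^{-1}(k)$. The condition that the entire $k$th column lies in $\overline{R_w}$ says that for every $i \in [n]$, either $w(i) \leq k$ or $w^{-1}(k) \leq i$. For $i \geq m$ the second inequality is automatic, so the condition is really that $w(i) \leq k$ for all $i < m$, and since $w$ is a permutation with $w(m) = k$, this is equivalent to $w(i) < k$ for all $i < m$. This is precisely the statement that $k = w(m)$ is a left-to-right maximum of the word $w_1 w_2 \cdots w_n$. The reverse implication is immediate by reading the chain of equivalences backward.

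For the row statement, I would note that $(i,j) \in R_w$ iff $(j,i) \in R_{w^{-1}}$, so the $k$th row of $\overline{R_w}$ is contained in $\overline{R_w}$ iff the $k$th column of $\overline{R_{w^{-1}}}$ is contained in $\overline{R_{w^{-1}}}$; applying the column statement to $w^{-1}$ gives the desired characterization in terms of left-to-right maxima of $w^{-1}$.

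There is no real obstacle here: the argument is a short unpacking of the definition of $R_w$. The only minor care needed is the passage from $w(i) \leq k$ to $w(i) < k$ (using injectivity of $w$ and $w(m) = k$), and the symmetry argument for rows, both of which are routine.
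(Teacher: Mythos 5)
Your proof is correct and takes essentially the same approach as the paper, which simply states that the proposition follows from the definitions of $R_w$ and of left-to-right maxima; you have just written out that definitional unpacking explicitly, including the routine reduction from $w(i)\leq k$ to $w(i)<k$ and the transpose argument for rows.
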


\begin{proof}
This follows from the definitions of $R_w$ and of the left-to-right maxima.
\end{proof}

\subsection{Skew-vexillary permutations} \label{skew}

A permutation $w$ is {\bf vexillary} if its Rothe diagram, up to a permutation of its rows and columns, is the diagram of a partition. Call this partition $\lambda(w)$. Then by Haglund's Theorem \ref{jh}, for vexillary permutations $w$ we have that
\[
\mat_q(n, R_w,r) = \mat_q(n,S_{\lambda(w)},r)=(q-1)^rq^{n^2-\inv(w)-r}R_r^{\sf (NE)}(\overline{S_{\lambda(w)}},q^{-1}).
\]
 It is well-known that $w$ is vexillary if and only if $w$ avoids $2143$ \cite{LS}, i.e., there is no sequence $1\leq i<j<k<l\leq n$ in $w$ such that $w_j<w_i<w_l<w_k$.

Next we give a characterization of permutations whose Rothe diagram, up to a permutation of rows and columns, is the complement of a skew shape. For such a permutation $w$, we have by Corollary~\ref{skewcor} that $\mat_q(n,R_w,r)/(q-1)^r$ is a polynomial with nonnegative integer coefficients. So Conjecture~\ref{conj:rothe} holds for these permutations.

For the proof we need the following definition: we say that a skew shape $S_{\lambda/\mu}$ in $[n]\times [n]$ is {\bf non-overlapping}  if there is no row nor column that contains entries from both $S_\mu$ and $\overline{S_\lambda}$.

\begin{theorem} \label{thm:indecomp}
The Rothe diagram of $w=w_1w_2\cdots w_n$ is, up to permuting its rows and columns, the complement of a skew shape if and only if $w$ can be decomposed as $a_1a_2\cdots a_k b_1b_2\cdots b_{n-k}$ where $a_i<b_j$ and each of $a_1a_2\cdots a_k $ and $b_1b_2\cdots b_{n-k}$  is $2143$-avoiding.  
\end{theorem}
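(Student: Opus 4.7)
The forward direction is mostly bookkeeping. Given $w = a_1\cdots a_k b_1\cdots b_{n-k}$ with $a_i < b_j$ and both pieces $2143$-avoiding, the value condition forces $\{a_1,\ldots,a_k\}=[k]$, so the permutation matrix of $w$ is block-diagonal on $[k]$ and $[n]\setminus[k]$. A direct check from the definition $R_w=\{(i,j):w(i)>j,\,w^{-1}(j)>i\}$ shows that no cell of $R_w$ can lie in the off-diagonal blocks, so $R_w=R_a\sqcup R_b$ (with the latter shifted). By Lascoux--Sch\"utzenberger, each piece is a straight shape up to row/column permutation within its block; then reversing the row and column orders of the second block turns its straight shape into the complement (within the block) of a straight shape, and the resulting diagram is the complement of a non-overlapping skew shape $S_{\lambda/\mu}$, with $\mu$ living in the first block and $\overline{S_\lambda}$ in the second.

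For the reverse direction, suppose $R_w=\overline{S_{\lambda/\mu}}$ up to row/column permutation via some $\sigma,\tau$. I would first argue that without loss of generality $S_{\lambda/\mu}$ is non-overlapping: if any row or column of $\overline{S_{\lambda/\mu}}$ contained cells of both $S_\mu$ and $\overline{S_\lambda}$, then pulling that structure back through $\sigma,\tau$ and using the Le property of Rothe diagrams would either produce a contradiction or let us re-express the same $R_w$ as the complement of a smaller non-overlapping skew shape. Once we are in the non-overlapping case, $\overline{S_{\lambda/\mu}}$ splits cleanly: $S_\mu$ lives on some rows $R_\mu$ and columns $C_\mu$, disjoint from $\overline{S_{\tilde\lambda}}$ on rows $R_\lambda$ and columns $C_\lambda$, with the cross blocks empty. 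Pulling back then yields a partition $[n]=I_1\sqcup I_2$, $[n]=J_1\sqcup J_2$ with $R_w\subseteq (I_1\times J_1)\cup(I_2\times J_2)$, the first block a straight shape and the second the complement of a straight shape (both up to further row/column permutation).

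The main obstacle is then to promote this block decomposition of $R_w$ to a block decomposition of the permutation matrix of $w$---that is, to show $w(I_1)=J_1$, so that after reordering with $I_1=J_1=[k]$ we get $w=ab$ with $a_i<b_j$. This is subtle because $R_w$ can admit spurious block decompositions: empty rows of $R_w$, which by Proposition~\ref{LRMcol} are precisely the rows at which $w$ achieves a left-to-right maximum, could a priori be assigned to either side of the partition. I would handle this by distributing the empty rows and columns of $R_w$ using the left-to-right-maxima structure of $w$, and then use the characterizing inequalities for cells of $R_w$ to verify that the emptiness of the off-diagonal blocks forces $\{w_i:i\in I_1\}=[k]$. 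Once the decomposition $w=ab$ is established, the two blocks of $R_w$ are exactly $R_a$ and $R_b$; since $R_a$ is a straight shape up to row/column permutation and $R_b$ is the complement of one (hence again a straight shape, by reversing rows and columns within the block), Lascoux--Sch\"utzenberger gives that both $a$ and $b$ are $2143$-avoiding.
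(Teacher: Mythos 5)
Your ``if'' direction is correct and is essentially the paper's argument: block-diagonality of $R_w$, Lascoux--Sch\"utzenberger applied to each block, and a $180^\circ$ rotation of the second block to turn its straight shape into the complement of one.

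The ``only if'' direction, however, has a genuine gap: the two steps you flag with ``I would argue'' and ``I would handle this by'' are exactly where the content of the proof lies, and neither is carried out. First, the non-overlapping reduction is not a ``without loss of generality'' that might require re-expressing $R_w$ as the complement of a different, smaller skew shape --- it is automatic. Since $w_1$ is a left-to-right maximum of $w$ and $w^{-1}(1)$ is one of $w^{-1}$, Proposition~\ref{LRMcol} shows that column $w_1$ and row $w^{-1}(1)$ of $[n]\times[n]$ lie entirely in $\overline{R_w}$; after rearrangement the skew shape therefore contains a full column and a full row, which forces it to be non-overlapping. Your disjunction ``either a contradiction or a smaller non-overlapping skew shape'' is not an argument for this. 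Second, and more seriously, the promotion of the block decomposition of $R_w$ to a decomposition of $w$ itself is asserted rather than proved, and the difficulty is not only where to put the empty rows and columns. One must show that every entry $(i,w_i)$ of the permutation matrix can be consistently assigned to one side, i.e., that all cells of $R_w$ lying in row $i$ or in column $w_i$ land on the same piece ($S_\mu$ or $\overline{S_\lambda}$) of the rearranged diagram. This is precisely where the paper invokes the Le property: if $(i,j)$ and $(k,w_i)$ are both in $R_w$ then so is $(k,j)$, and non-overlappingness then forces all three cells to have the same color. From that one deduces that no inversion of $w$ crosses between the two classes and that the two colored classes are separated in both position and value, which is what permits cutting $w$ into a prefix $a$ and suffix $b$ with $a_i<b_j$. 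Note also that emptiness of the off-diagonal blocks does not by itself ``force'' $\{w_i : i\in I_1\}=[k]$ for the given partition (for the identity permutation one may take $I_1=J_1=\{1,3\}$ and all your block conditions hold), so a genuine re-choosing argument --- in the paper, the passage to the decomposition $u_1c_1u_2c_2u_3$ built from uncolored and colored elements --- is needed, not merely a verification from the defining inequalities of $R_w$.
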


\begin{proof}
First we prove the ``if'' direction. This argument is illustrated in Figure~\ref{fig:skewvex1}.  Suppose that $w$ can be decomposed into $a=a_1a_2\cdots a_k$ and $b=b_1b_2\cdots b_{n-k}$ as in the theorem statement.  Then the Rothe diagram $R_w$ is block-diagonal, i.e., it consists of some entries in the upper-left $k \times k$ block and some in the lower-right $(n-k)\times (n-k)$ block, with no entries in the upper-right $k\times (n-k)$ block or lower-left $(n-k) \times k$ block.  Furthermore, note that the  upper-left and lower-right subdiagrams are identical to the Rothe diagrams of the permutations order-isomorphic to $a_1a_2\cdots a_k$ and $b_1b_2\cdots b_{n-k}$, respectively.  

Since both of these permutations are $2143$-avoiding, and their Rothe diagrams in the upper-left and lower-right corners do not share any rows or columns in common, they can be rearranged independently to form two separate straight shapes.  We may then rotate the straight shape corresponding to $R_b$ by $180^\circ$ via permuting rows and columns (without changing the rearranged upper-left corner) to get a straight shape in the upper-left corner and an upside-down straight shape in the lower-right. This is the outside of a skew shape, as desired.

\begin{figure}
\begin{center}
\includegraphics[height=3cm]{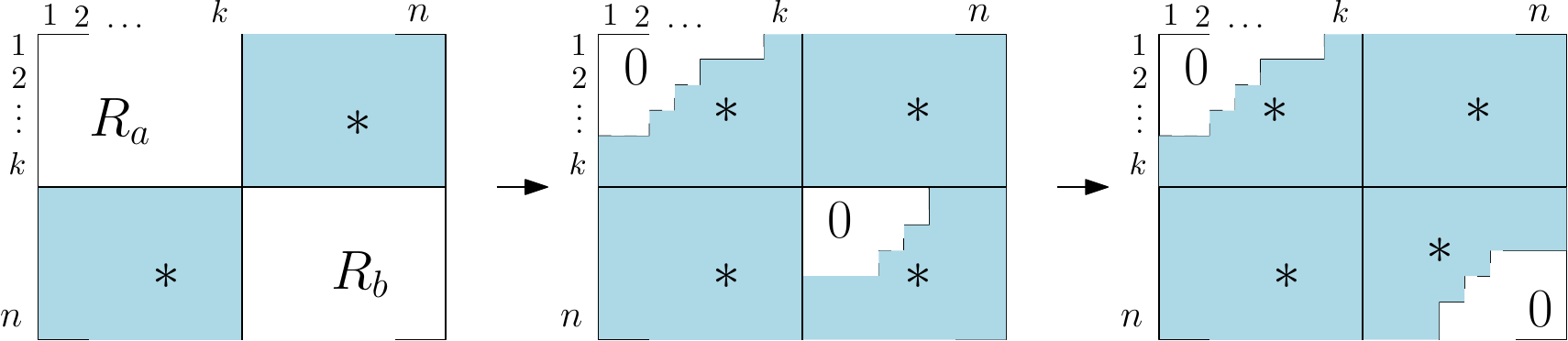}
\end{center}
\caption{If $w$ can be decomposed as $a_1a_2\cdots a_k b_1b_2\cdots b_{n-k}$ where $a_i<b_j$ and both $a=a_1a_2\cdots a_k$ and $b=b_1b_2\cdots b_{n-k}$ are $2143$ avoiding then $\overline{R_w}$ can be rearranged into a skew shape.}
\label{fig:skewvex1}
\end{figure}

Second, we prove the ``only if'' direction of the theorem. Suppose that the diagram $R_w$, when rearranged, forms the complement of a skew shape $S_{\lambda/\mu}$. This skew shape contains the column that was previously (i.e., before rearrangement) given by $\{(j,w_1) \mid j \geq 1\}$. Likewise, it contains the row that was previously given by  $\{(w^{-1}_1,j) \mid j \geq 1\}$. It follows that the skew shape $S_{\lambda/\mu}$ is non-overlapping. After rearrangement, every entry of $R_w$ either belongs to $S_{\mu}$ or $\overline{S_{\lambda}}$. We use this partition of the elements of $R_w$ to identify the appropriate decomposition of $w$.

We color an entry of $R_w$ blue if it belongs to $S_{\mu}$ after rearrangement, otherwise we color it red. We show the following claim: for every entry $w_i$ of $w$, the elements of $R_w$ in the same row or column as $(i,w_i)$ are either all blue or all red.

Since $S_{\lambda/\mu}$ is non-overlapping, the entries of $R_w$ in each row have the same color, and likewise for columns. If there is an entry $(i,w_i)$ with elements  $(i,j)$ and $(k,w_i)$ of $R_w$ then by the Le property of Rothe diagrams $(k,j)$ is also in $R_w$. Therefore all three entries have the same color, and the claim follows.

By the argument of the preceding paragraph,  we may color the elements of $w$ as follows: if $(i,w_i)$ is in the same row or column as a red entry of $R_w$ then we color $w_i$ red, whereas if $(i,w_i)$ is in the same row or column as a blue entry of $R_w$ then we color $w_i$ blue, and otherwise we leave $w_i$ uncolored. We observe a few properties of the colored and uncolored elements of the permutation. First, inversions of $w$ can only happen between elements of the same color. Second, $w_i$ is uncolored if and only if $w_i$ is not involved in any inversions. And third, the subword of the blue (respectively, red) elements of $w$ is $2143$-avoiding. This is because by definition, the entries of $R_w$ in the same row or column as $(i,w_i)$ for blue (respectively, red) $w_i$ are exactly the entries in $S_{\mu}$ (respectively, $\overline{S_{\lambda}}$) after rearrangement. This is equivalent to saying that the subword of the blue (respectively, red) elements of $w$ is vexillary and thus $2143$-avoiding. 

From the three observations above it follows that the permutation $w$ decomposes as $u_1c_1u_2c_2u_3$ where (i) the $u_i$ are (possibly empty) blocks of uncolored elements, $c_1$ is the block of elements of one color of $w$, and $c_2$ is the block of elements of the other color of $w$; (ii) the entries of each block are smaller than the entries of the following blocks, and (iii) the blocks $c_1$ and $c_2$ are $2143$-avoiding. Finally, if we set $a_1a_2\cdots a_k = u_1c_1$ and $b_1b_2\cdots b_{n-k}=u_2c_2u_3$ we get a desired decomposition of $w$ where $a_i<b_j$ and  $a_1a_2\cdots a_k$ and $b_1b_2\cdots b_{n-k}$ are $2143$-avoiding.
\end{proof}

We call the above permutations {\bf skew-vexillary}\footnote{Note that in the literature \cite[Prop. 2.3]{BJS} there is another meaning of the term ``skew-vexillary permutation'' which does not seem to be related to our definition.} and we denote by $\lambda/\mu(w)$ the skew shape whose complement is the rearrangement of $R_w$.\footnote{The ``function'' $\lambda/\mu(w)$ is not actually well-defined most of the time since you can switch the upper-left and lower-right corners by permuting rows and columns. Luckily nothing we use it for depends on this choice.} 

\begin{corollary}\label{cor:skewvex}
By Theorem~\ref{NEthm}, if $w$ is skew-vexillary then $\mat_q(n, R_w, r)/(q-1)^r$ is equal to $q^{n^2-\inv(w)-r}R_r^{\sf (NE)}(S_{\lambda/\mu(w)},q^{-1})$, a polynomial with nonnegative integer coefficients. In particular, Conjecture~\ref{conj:rothe} holds for skew-vexillary permutations. 
\end{corollary}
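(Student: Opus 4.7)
The plan is to assemble the ingredients of Theorem~\ref{thm:indecomp}, Theorem~\ref{NEthm}, and the general row/column-permutation invariance of $\mat_q(n,S,r)$ noted in the introduction. First, by Theorem~\ref{thm:indecomp}, a skew-vexillary permutation $w$ is exactly one whose Rothe diagram $R_w$, after some permutation of its rows and columns, coincides with $\overline{S_{\lambda/\mu(w)}}$ for a skew shape $S_{\lambda/\mu(w)}$. Equivalently, $\overline{R_w}$ rearranges to the skew shape $S_{\lambda/\mu(w)}$ itself.

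Next, since $\mat_q(n,S,r)$ depends only on the multiset of rows and columns occupied by $S$, the rearrangement above gives
\[
\mat_q(n, R_w, r) \;=\; \mat_q\bigl(n,\, \overline{S_{\lambda/\mu(w)}},\, r\bigr).
\]
Every skew shape has the \textsf{NE} Property (as recorded in Section~\ref{sec:def}), so Theorem~\ref{NEthm} applies with $B = S_{\lambda/\mu(w)}$ and yields
\[
\mat_q\bigl(n,\, \overline{S_{\lambda/\mu(w)}},\, r\bigr) \;=\; (q-1)^r \, q^{\,\#S_{\lambda/\mu(w)} - r}\, R_r^{\sf(NE)}\!\bigl(S_{\lambda/\mu(w)},\, q^{-1}\bigr).
\]

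I would then compute the exponent by noting $\#R_w = \inv(w)$ (the cardinality of the Rothe diagram is the number of inversions), and rearrangement preserves cardinality, so $\#S_{\lambda/\mu(w)} = \#\overline{R_w} = n^2 - \inv(w)$. Substituting gives the claimed formula for $\mat_q(n,R_w,r)/(q-1)^r$. For the nonnegativity of coefficients, one may either cite Corollary~\ref{skewcor} directly, or repeat the observation at the end of the proof of Theorem~\ref{NEthm} that $\inv_{\sf NE}(C,B) \leq \#B - r$ for any $r$-rook placement $C$ on $B$, which guarantees that the product $q^{\#B-r} R_r^{\sf(NE)}(B, q^{-1})$ has no negative powers of $q$ and hence is a polynomial in $q$ with nonnegative integer coefficients.

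There is no real obstacle: the entire content of the corollary is to package the previously proved Theorems~\ref{thm:indecomp} and~\ref{NEthm}, together with the row/column invariance of $\mat_q$, into a single explicit formula. The only subtlety worth flagging in the write-up is that $\lambda/\mu(w)$ is only defined up to a choice of which $2143$-avoiding factor becomes the ``upper-left'' straight shape and which becomes the ``lower-right'' one, but since $\mat_q(n, R_w, r)$ and the rook polynomial $R_r^{\sf(NE)}(S_{\lambda/\mu(w)}, q^{-1})$ are both invariant under such rearrangements, the formula is well-defined.
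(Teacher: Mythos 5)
Your proposal is correct and is exactly the argument the paper intends: the corollary is stated as an immediate consequence of Theorem~\ref{thm:indecomp} (rearranging $R_w$ into $\overline{S_{\lambda/\mu(w)}}$), the row/column-permutation invariance of $\mat_q$, and Theorem~\ref{NEthm} with $\#S_{\lambda/\mu(w)} = n^2 - \inv(w)$, with nonnegativity coming from the bound $\inv_{\sf NE}(C,B) \leq \#B - r$ noted after Theorem~\ref{NEthm} (equivalently, Corollary~\ref{skewcor}). Your remark about the ambiguity in the choice of $\lambda/\mu(w)$ matches the paper's own footnote on that point.
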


If $w$ is a skew-vexillary permutation then every subpermutation of $w$ is, as well.  This implies that skew-vexillarity may be rephrased as a pattern-avoidance condition.  We do this now.

\begin{proposition}\label{prop:patterns}
The permutation $w \in \mathfrak{S}_n$ can be decomposed as $w = a_1 \cdots a_k b_{1} \cdots b_{n - k}$ such that $a_i < b_j$ for all $i$ and $j$ and the permutations $a_1 \cdots a_k$ and $b_{1} \cdots b_{n - k}$ avoid $2143$ if and only if $w$ avoids the nine patterns $24153$, $25143$, $31524$, $31542$, $32514$, $32541$, $42153$, $52143$ and $214365$.
\end{proposition}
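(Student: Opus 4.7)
My plan is to reduce each direction to a finite case check combined with a structural argument based on the finest direct-sum decomposition of $w$.

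For the \emph{only-if} direction, I first observe that admitting a valid decomposition is hereditary under pattern-containment: if $w = a_1 \cdots a_k\, b_1 \cdots b_{n-k}$ with $a_i<b_j$ and both halves $2143$-avoiding, and $w'$ is any subpermutation of $w$, then partitioning the chosen positions of $w'$ according to which block of $w$ they lie in gives an analogous decomposition of $w'$. Thus it suffices to verify that none of the nine listed patterns admits a valid decomposition. For each pattern I enumerate the prefixes whose value set equals $\{1,\ldots,k\}$ (these are the only candidate non-trivial splits), check that in each such split at least one of the halves contains $2143$, and verify the pattern itself contains $2143$ (ruling out the trivial splits $k=0$ and $k=n$). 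For instance, $214365$ admits only the candidate splits $k=2$ and $k=4$, giving $a=21,\, b=4365\sim 2143$ and $a=2143,\, b=65\sim 21$ respectively, and contains $2143$ at positions $1,2,3,4$.

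For the \emph{if} direction I prove the contrapositive. Assume $w$ admits no valid decomposition, and let $w = c_1 \oplus \cdots \oplus c_r$ be the finest decomposition into indecomposable blocks. Since values strictly increase across blocks, every $2143$-instance in $w$ either lies within one $c_j$ or has its ``$21$'' in some $c_p$ and its ``$43$'' in a later $c_q$; any block carrying such a ``$21$'' necessarily has length $\ge 2$. If $w$ avoids $2143$ the trivial decomposition works, so assume $w$ contains $2143$. Count the blocks containing an inversion. If this count is $\ge 3$, picking one inversion from each of three such blocks yields six positions whose values standardize to $214365$. If the count is $\le 2$ and no $c_j$ contains $2143$, then splitting at any boundary $k$ with $p \le k < q$ (where $c_p, c_q$ are the at-most-two inversion-blocks) places exactly one inversion-block in each half, so each half avoids $2143$---a valid decomposition, contradicting our hypothesis. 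The only remaining case is that some $c_j$ itself contains $2143$, which reduces the problem to the final lemma: every indecomposable permutation of length $\ge 5$ containing $2143$ contains one of the eight length-five listed patterns.

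To prove this last lemma, the base case $n=5$ is by direct enumeration of length-five indecomposable permutations (those with $w_1 \ne 1$, $w_5 \ne 5$, $\{w_1,w_2\} \ne \{1,2\}$, and $\{w_1,w_2,w_3\} \ne \{1,2,3\}$): among these, exactly the eight listed contain $2143$. For $n \ge 6$, given any $2143$-instance at positions $p_1<p_2<p_3<p_4$ with values $v_2<v_1<v_4<v_3$, a direct check mirroring the base-case enumeration shows that adjoining a fifth position $p_0$ produces one of the eight patterns precisely when either $p_0 \in [1,p_2-1]\setminus\{p_1\}$ with $w(p_0)>v_4$, or $p_0 \in [p_3+1,n]\setminus\{p_4\}$ with $w(p_0)<v_1$. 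The plan is to show such a $p_0$ must exist by exploiting indecomposability at levels $k=p_2$ and $k=p_3-1$, which force some value in the prefix $\{w(1),\ldots,w(p_2)\}$ to exceed $p_2$ and some value in the suffix to lie below $p_3$; if one of these ``escaping'' values exceeds $v_4$ or falls below $v_1$ we are done, and otherwise we replace the current $2143$-instance with a related one (tightening $v_1$ or $v_4$) and iterate. I anticipate the main obstacle will be giving a clean termination argument for this iteration, since the candidate replacements must be shown to preserve the $2143$-structure and to strictly improve one of the bounds each step.
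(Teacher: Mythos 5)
Your overall architecture matches the paper's: heredity of the decomposition property plus indecomposability of the eight length-five patterns (and a direct check of $214365$) for one direction, and a reduction to indecomposable blocks for the other. Your handling of the decomposable case by counting inversion-carrying blocks is correct and, if anything, cleaner than the paper's induction, and your observation that the eight length-five patterns are exactly the configurations obtained from a $2143$-instance by adjoining one extra point that either precedes $p_2$ with value above $v_4$ or follows $p_3$ with value below $v_1$ is correct (all eight position-range/value-range combinations check out). Note that the paper simply asserts the key fact you are trying to prove --- that the minimal indecomposable $2143$-containing permutations are the eight of length five --- so you are attempting something the paper leaves to the reader.

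However, your proof of that key lemma has a genuine gap, and it is not merely the termination issue you flag. The iteration you propose --- replace the current instance whenever a prefix value escaping above $p_2$, or a suffix value escaping below $p_3$, lies strictly between $v_1$ and $v_4$ --- can halt without producing a valid $p_0$ even though one exists. Concretely, take $w=315264$ (indecomposable) with the $2143$-instance at positions $(1,2,5,6)$ and values $(3,1,6,4)$. The windows $[1,p_2-1]\setminus\{p_1\}$ and $[p_3+1,n]\setminus\{p_4\}$ are both empty, the only prefix value exceeding $p_2=2$ is $v_1=3$ itself, and the only suffix value below $p_3=5$ is $v_4=4$ itself, so your procedure makes no move; yet $w$ contains $31524$ (positions $1,2,3,4,6$). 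The missing ingredient is an analysis of the positions strictly between $p_2$ and $p_3$: after fixing an instance that maximizes $v_1$ and then minimizes $v_4$, one can show every position $\le p_2$ carries a value $\le v_1$ and every position $\ge p_3$ a value $\ge v_4$, then repeatedly pull the inner pair $(p_2,p_3)$ inward using middle positions whose values fall outside $(v_1,v_4)$; when no such middle positions remain, a counting argument forces $\{w(1),\dots,w(p_2)\}=[p_2]$, contradicting indecomposability. Without an argument of this kind (or an explicit finite verification replacing it), your ``if'' direction is incomplete.
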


\begin{proof}
Call the decomposition in question an ``SV-decomposition'' (for {\bf\color{red} S}kew-{\bf\color{red} V}exillary).  First, we show that if $w$ contains any of the nine patterns listed in the statement of the theorem, it does not have an SV-decomposition.

Let $p$ be any of the eight patterns of length $5$; it's easy to check that $p$ is indecomposable, i.e., we cannot write $p = uv$ with $u$, $v$ nonempty and $u_i < v_j$ for all $i$, $j$.  Thus, if we write $w = ab$ with $a_i < b_j$ we must have either $p$ contained in $a$ or $p$ contained in $b$.  Since $p$ contains $2143$, it follows that either $a$ or $b$ contains $2143$, so this decomposition is not SV, as desired.

Now consider the case of the pattern $214365$.  Any decomposition of $w$ decomposes $214365$, and it's easy to see that in any of the four decompositions of $214365$, one piece or the other contains a copy of $2143$.  This completes the proof that any permutation containing the given patterns has no SV-decomposition.

Now consider the converse.  Suppose that $w$ is not SV-decomposable.  There are two cases.

If $w$ is indecomposable and contains $2143$, then $w$ contains a minimal indecomposable permutation that contains $2143$.  The minimal $2143$-containing indecomposable permutations are precisely the eight permutations of length $5$ that we consider.

Finally, we show by induction that every decomposable but not SV-decomposable permutation contains one of the nine patterns.  Choose a  such $w$, and write $w = ab$ with $a_i < b_j$.  Without loss of generality, in this decomposition we have that $a$ contains $2143$.  If $b$ has a descent, it follows immediately that $w$ contains $214365$.  Otherwise, $w = a_1 \cdots a_k (k + 1)(k + 2) \cdots n$.  Observe that a permutation of this form has an SV-decomposition if and only if the shorter permutation $a = a_1 \cdots a_k$ has an SV-decomposition; thus, $a$ has no SV-decomposition.  If $a$ is indecomposable, we have by the preceding paragraph that $a$ contains one of the nine patterns; if $a$ is decomposable, we have the same result by induction.

Putting the two cases together, every permutation that is not SV-decomposable contains at least one of the nine patterns, as desired.
\end{proof}

\begin{remark} \label{rem:stansymmfcn}
Vexillary permutations have many other interesting properties (see for example  \cite[Sections 2.6.5 and 2.8.1]{LM}). For example, the {\em Stanley symmetric function} \cite[Section 2]{StCox} $F_w$ of a vexillary permutation $w$ equals the Schur function $s_{\lambda(w)}$. Since a skew-vexillary permutation $w$ with skew shape $\lambda/\mu(w)$ is SV-decomposable then $F_w$ is the product $s_{[n]\times [n] \backslash \lambda}s_{\mu}$ of Schur functions. Do other properties of vexillary permutations carry over to skew-vexillary permutations? 
\end{remark}

\subsubsection{An enumerative aside}

Any structurally interesting class of permutations calls out to be enumerated.  Vexillary permutations were enumerated by West \cite{W} (who showed they are in bijection with $1234$-avoiding permutations, which had been enumerated earlier by Gessel \cite{IG}). We now enumerate skew-vexillary permutations in terms of the generating function for vexillary permutations. For convenience, we denote by $\mathfrak{S}_n(2143)$ the set of vexillary permutations of length $n$. Also, given  two permutations $u \in \mathfrak{S}_i$ and $v \in \mathfrak{S}_j$, we denote by $w = u \oplus v$ the permutation in $\mathfrak{S}_{i + j}$ defined by $w_t = u_t$ for $t \in [i]$ and $w_{t + i} = i + v_t$ for $t \in [j]$.

\begin{theorem} \label{thm:genseries}
Let $V(x)$ be the ordinary generating function for $2143$-avoiding permutations and let $SV(x)$ be the ordinary generating function for skew-vexillary permutations.  We have
\[
SV(x) = (1 - x)V(x)^2 - V(x) + \frac{1}{1 - x}.
\]
\end{theorem}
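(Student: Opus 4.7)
My plan is to enumerate vexillary and skew-vexillary permutations using their decomposition into $\oplus$-indecomposable summands. Recall that every permutation $w$ has a unique expression $w = u_1 \oplus \cdots \oplus u_r$ in which no $u_i$ admits a nontrivial direct-sum decomposition, and every decomposition $w = a \oplus b$ corresponds to a choice of $j \in \{0, 1, \ldots, r\}$ with $a = u_1 \oplus \cdots \oplus u_j$ and $b = u_{j+1} \oplus \cdots \oplus u_r$.

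The key lemma I would establish is: a direct sum $u_1 \oplus \cdots \oplus u_r$ of $\oplus$-indecomposable vexillary permutations is vexillary if and only if at most one $u_i$ has length $\ge 2$. For the forward direction, every indecomposable permutation of length $\ge 2$ contains a descent (its first value exceeds $1$, and $1$ appears later), and descents in two distinct summands $u_p$ and $u_q$ with $p<q$ immediately produce a $2143$ pattern since the entries of $u_q$ all exceed those of $u_p$. For the converse, if $w = 1^a \oplus u \oplus 1^b$ with $u$ indecomposable vexillary, then any $2143$ pattern in $w$ requires two disjoint descents, both of which must be internal to the $u$-block, contradicting vexillarity of $u$.

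Combining this lemma with Theorem~\ref{thm:indecomp}, a permutation is skew-vexillary if and only if it is a direct sum of $\oplus$-indecomposable vexillary permutations of which at most two have length $\ge 2$. Letting $I(x)$ denote the ordinary generating function for $\oplus$-indecomposable vexillaries of length $\ge 2$, and classifying (skew-)vexillary permutations by the number (zero, one, or two) of summands of length $\ge 2$ with all other summands equal to the trivial permutation $1$, I obtain
\[
V(x) = \frac{1}{1-x} + \frac{I(x)}{(1-x)^2}, \qquad SV(x) = \frac{1}{1-x} + \frac{I(x)}{(1-x)^2} + \frac{I(x)^2}{(1-x)^3}.
\]

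To finish, I would eliminate $I(x)$. From the first formula, $\frac{I(x)}{(1-x)^2} = V(x) - \frac{1}{1-x}$; squaring and multiplying by $1-x$ gives $\frac{I(x)^2}{(1-x)^3} = (1-x)V(x)^2 - 2 V(x) + \frac{1}{1-x}$, and substituting into the formula for $SV(x)$ produces the desired identity. The main conceptual hurdle is proving the characterization lemma for vexillary direct sums; the subsequent generating-function computation is routine.
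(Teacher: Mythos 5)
Your proposal is correct and follows essentially the same route as the paper: both arguments rest on the observation that a vexillary permutation has at most one $\oplus$-indecomposable summand of length at least $2$ (the paper phrases this via two inversions in distinct summands forming a $2143$, you via descents), leading to the identical decompositions $V(x) = \frac{1}{1-x} + \frac{I(x)}{(1-x)^2}$ and $SV(x) = V(x) + \frac{I(x)^2}{(1-x)^3}$ and the same elimination of $I(x)$. The only cosmetic difference is that you state the structural fact as an explicit lemma about direct sums of indecomposables, which the paper leaves implicit.
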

\begin{proof}
Let $V(x) = 1 + x + 2x^2 + 6x^3 + 23x^4 + \ldots$ be the \emph{ordinary} generating function for $2143$-avoiding permutations and let $I(x) = x^2 + 3x^3 + 13x^4 + \ldots$ be the generating function for indecomposable $2143$-avoiding permutations of length $2$ or more.  Suppose $w \in \mathfrak{S}_n(2143)$ can be written $w = u_1 \oplus u_2$ where $u_1$ and $u_2$ are nonempty permutations.  If both $u_1$ and $u_2$ contain an inversion, the four entries from the two inversions form an instance of $2143$ in $w$, a contradiction.  Thus, one of $u_1$ and $u_2$ must be the identity permutation.  It follows immediately that every $w \in \mathfrak{S}_n(2143)$ other than the identity can be written as $w = \id_i \oplus u \oplus \id_k$ for identity permutations $\id_i$ and $\id_k$ (possibly of length $0$) and an indecomposable $2143$-avoiding permutation $u$.  Thus,
\[
V(x) = \frac{1}{1 - x} + \frac{I(x)}{(1 - x)^2}
\]
and so 
\[
I(x) = (1 - x)^2 V(x) + x - 1.
\]

Now suppose that $w \in \mathfrak{S}_n$ has the property that $w = u \oplus v$ where $u$ and $v$ are $2143$-avoiding permutations (possibly of length $0$).  It follows from the preceding analysis that either $w$ is $2143$-avoiding itself or we can write $w = \id_i \oplus u' \oplus \id_j \oplus v' \oplus \id_k$ where the $\id$s are identity permutations (possibly of length $0$) and $u'$ and $v'$ are indecomposable $2143$-avoiding permutations of length $2$ or more.  Thus, the generating function $SV(x)$ for such permutations is given by 
\begin{align*}
SV(x) & = V(x) + \frac{I(x)^2}{(1 - x)^3} \\
& = (1 - x) V(x)^2 - V(x) + \frac{1}{1 - x},
\end{align*}
as desired.
\end{proof}

If $w$ is skew-vexillary then $\mat_q(n,R_w,r)/(q-1)^r$ is of the form $q^{n^2-\inv(w)-r}\sum_{\text{some } u \in \mathfrak{S}_n} q^{-\inv(u)}$ by Corollary~\ref{cor:skewvex}. Another polynomial with this form is the {\em Poincar\'e polynomial} of the {\em strong Bruhat order} (see e.g. \cite[Sec. 2.1.2]{LM}) in $\mathfrak{S}_n$. In the next subsection we study the connections between these and $\mat_q(n,R_w,n)$.

\section{Poincar\'e polynomials, \texorpdfstring{$\mat_q(n,R_w,n)$}{mat(q;n,Rw,n)} and \texorpdfstring{$q$}{q}-rook numbers}\label{sec:poincare}
A natural question when faced with a family of polynomials with positive integer coefficients is whether they count some nice combinatorial object.  In this section, we investigate connections between our polynomials $\mat_q(n, R_w, n)$ (note in particular that we focus on the case of full rank) and certain well-known polynomials we define now. 

As before, let $\inv(w)$ denote the number of inversions $\#\{(i,j) \mid i<j, w_i>w_j\}$ of $w$.  Recall the notion of the {\bf strong Bruhat order} $\prec$ on the symmetric group \cite[Ch.\ 2]{BB}: if $t_{ij}$ is the transposition that switches $i$ and $j$, we have as our basic relations that $u \prec u\cdot t_{ij}$ in the strong Bruhat order when $\inv(u) + 1 = \inv(u \cdot t_{ij})$, and we extend by transitivity. Let $P_w(q)=\sum_{u \succeq  w} q^{\inv(u)}$ be the (upper) {\bf Poincar\'e polynomial} of $w$, where we sum over all permutations $u$ that succeed $w$ in the strong Bruhat order. Equivalently, $P_w(q)$ is the rank generating function of the interval $[w,w_0]$ in the strong Bruhat order where $w_0$ is the largest element $n\,n-1\,\cdots\,21$ of this order. 

\begin{example}
If $w=3412$, then the permutations in $\mathfrak{S}_4$ that succeed $w$ in the Bruhat order are $3412,3421,4312$ and $4321$. The generating polynomial for this set by number of inversions is $P_{2143}(q)=q^6+2q^5+q^4$. 
\end{example}

In \cite{Sj}, Sj\"ostrand gave necessary and sufficient conditions for $P_w(q)$ to be equal to a $q$-rook number of a skew shape associated to $w$. Namely, the {\bf left hull} $H_L(w)$ of $w$ is the smallest skew shape that covers $w$.  Equivalently, $H_L(w)$ is the union over non-inversions $(i,j)$ of $w$ of the rectangles $\{ (k,\ell) \mid w_i \leq k \leq w_j,   i \leq \ell \leq j\}$. 
See Figure~\ref{exhull} for an example of the left hull of a permutation.

\begin{figure}
$$
\begin{array}{cc}
R_{35142} &H_L(35142) 
\\ 
\left[ \begin {array}{ccccc} 0&0&\textcolor{red}{\underline{a_{{13}}}}&a_{{14}}&a_{{15}}
\\ 0&0&a_{{23}}&0&\textcolor{red}{\underline{a_{{25}}}}\\ \textcolor{red}{\underline{a_{
{31}}}}&a_{{32}}&a_{{33}}&a_{{34}}&a_{{35}}\\ a_{
{41}}&0&a_{{43}}&\textcolor{red}{\underline{a_{{44}}}}&a_{{45}}\\ a_{{51}}&\textcolor{red}{\underline{a
_{{52}}}}&a_{{53}}&a_{{54}}&a_{{55}}\end {array} \right] 
&
\left[ \begin {array}{ccccc} 0&0&\textcolor{red}{\underline{a_{{13}}}}&a_{{14}}&a_{{15}}
\\ 0&0&a_{{23}}&a_{{24}}&\textcolor{red}{\underline{a_{{25}}}}
\\ \textcolor{red}{\underline{a_{{31}}}}&a_{{32}}&a_{{33}}&a_{{34}}&0
\\ a_{{41}}&a_{{42}}&a_{{43}}&\textcolor{red}{\underline{a_{{44}}}}&0
\\ a_{{51}}&\textcolor{red}{\underline{a_{{52}}}}&0&0&0\end {array} \right] 
\end{array}
$$
\caption{Matrices indicating the (i) Rothe diagram and (ii) left hull of $w = 35142$.  The matrix entries $a_{i\,w_i}$ are in \textcolor{red}{\underline{red}}.}
\label{exhull}
\end{figure}

The following special case of a result by Sj\"ostrand characterizes when $P_w(q)$ is equal to the rook polynomial of the left hull of the permutation $w$. 

\begin{theorem}[{\cite[Cor. 3.3]{Sj}}] \label{sjothm}
The Bruhat interval $[w,w_0]$ in $\mathfrak{S}_n$ equals the set of rook placements in the left hull $H_L(w)$ of $w$ (and in particular $R_n^{\sf (SE)}(H_L(w),q)=q^{|\mu|}P_w(q)$ where $\mu$ is the shape such that $H_L(w)=S_{\lambda/\mu}$ for some $\lambda$) if and only if $w$ avoids the patterns $1324, 24153, 31524$, and $426153$. 
\end{theorem}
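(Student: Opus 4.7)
The theorem combines a set equality with a generating-function identity, so the plan is first to reduce to the set equality $[w,w_0] = \{\pi \in \mathfrak{S}_n : (i,\pi_i) \in H_L(w) \text{ for all } i\}$. Writing $H_L(w) = S_{\lambda/\mu}$, the proof of Proposition~\ref{prop:nrksskew} already shows that for any $n$-rook placement $C$ on $S_{\lambda/\mu}$ with associated permutation $\pi$ one has $\inv_{\sf SE}(C,S_{\lambda/\mu}) = \inv(\pi) - |\mu|$. Once the set equality is established, summing $q^{\inv(\pi)-|\mu|}$ over $\pi \in [w,w_0]$ yields the claimed identity relating $R^{\sf (SE)}_n(H_L(w),q)$ and $P_w(q)$ up to a power of $q$.

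The inclusion $[w,w_0] \subseteq \{\pi : (i,\pi_i) \in H_L(w)\}$ holds unconditionally, and I would prove it first. The useful reformulation is that $(i,j) \in H_L(w)$ iff there exist indices $a \leq i \leq b$ with $w_a \leq j \leq w_b$. Suppose for contradiction that $\pi \succeq w$ but $(i,\pi_i) \notin H_L(w)$; then either $w_a > \pi_i$ for every $a \leq i$, or $w_b < \pi_i$ for every $b \geq i$. In the first case the smallest element of $\{w_1,\ldots,w_i\}$ strictly exceeds $\pi_i$, while the smallest element of $\{\pi_1,\ldots,\pi_i\}$ is at most $\pi_i$, contradicting the tableau criterion for $\pi \geq w$ at the prefix of length $i$. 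In the second case $\{w_1,\ldots,w_{i-1}\}$ contains every value in $\{\pi_i,\pi_i+1,\ldots,n\}$ while $\{\pi_1,\ldots,\pi_{i-1}\}$ omits $\pi_i$; comparing how many elements of each prefix are at least $\pi_i$, one finds a coordinate $k$ at which the $k$-th smallest element of the $w$-prefix is at least $\pi_i$ but the $k$-th smallest element of the $\pi$-prefix is strictly less, again violating the tableau criterion.

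It remains to show that the reverse inclusion (and hence equality) holds iff $w$ avoids the four listed patterns. For the ``only if'' direction, for each pattern $p \in \{1324, 24153, 31524, 426153\}$ I would exhibit an explicit rook placement on $H_L(p)$ that is not in $[p,w_0]$; for example $H_L(1324)$ is the full $4 \times 4$ grid and the identity permutation is such a witness. I would then argue that this obstruction persists when $p$ occurs as a pattern inside a larger $w$ by extending the rook placement with rooks placed at the non-pattern positions of $w$. For the ``if'' direction, assume $\pi$ is a rook placement on $H_L(w)$ with $\pi \not\succeq w$. From the failure of the tableau criterion at some prefix length I would extract a small collection of entries of $w$ whose relative order simultaneously certifies that each $(i,\pi_i)$ lies in a rectangle of $H_L(w)$ and that $\pi$ fails Bruhat comparison at that prefix. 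The main obstacle is the case analysis showing that every such irreducible configuration realizes one of the four patterns in $w$; I would stratify by the number of entries involved (four for $1324$, five for $24153$ and $31524$, six for $426153$) and by their relative positions and values, and I expect that the analysis can be streamlined by inductively peeling off the smallest or largest entry of $w$ and verifying that the hull and the Bruhat interval both behave compatibly under this reduction.
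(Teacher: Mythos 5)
The paper does not actually prove this statement: it is imported verbatim from Sj\"ostrand \cite[Cor.~3.3]{Sj}, so there is no in-paper argument to compare against and your proposal must stand on its own. The parts you carry out in full are sound. The reduction of the polynomial identity to the set equality via $\inv_{\sf SE}(C,S_{\lambda/\mu})=\inv(\pi)-|\mu|$ is correct, and your computation in fact gives $q^{|\mu|}R_n^{\sf (SE)}(H_L(w),q)=P_w(q)$, which matches how the identity is used later in the proof of Proposition~\ref{prop:MRP}; the exponent as printed in the statement of Theorem~\ref{sjothm} appears to be a typo. Your proof of the unconditional inclusion $[w,w_0]\subseteq\{\pi : (i,\pi_i)\in H_L(w)\text{ for all }i\}$ via the tableau criterion is correct in both cases.

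The gap lies in the pattern characterization, which is the real content of the theorem. For the ``only if'' direction you need, beyond a witness placement for each of the four patterns (you supply one only for $1324$), the lemma that if $\pi$ is obtained from $w$ by rearranging the values at the positions of a pattern occurrence of $p$ according to some $\sigma\not\succeq p$, then $\pi\not\succeq w$ --- equivalently, that flattening is an order isomorphism from the set of permutations agreeing with $w$ off the pattern positions onto $\mathfrak{S}_m$ with the Bruhat order. This is true and provable from the tableau criterion, but it is not automatic and you neither state nor prove it. More seriously, the ``if'' direction --- that avoidance of $1324$, $24153$, $31524$, $426153$ forces \emph{every} rook placement on $H_L(w)$ to dominate $w$ --- is where essentially all of Sj\"ostrand's work lies, and your proposal replaces it with a declaration that a case analysis ``can be streamlined by inductively peeling off the smallest or largest entry.'' Nothing in the proposal explains why a failure of the tableau criterion for a placement on $H_L(w)$ must force one of exactly these four patterns to occur in $w$, nor why the hull and the interval behave compatibly under the proposed peeling. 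As written, this direction is a plan rather than a proof, so the theorem is not established.
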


If $w$ is a skew-vexillary then by Corollary~\ref{cor:skewvex} we know that $\mat_q(n,R_w,n)/(q-1)^n$ is  (up to a power of $q$) a $q$-rook number. Next we show that this $q$-rook number is essentially a $q$-rook number of the left hull of a permutation $v$ that avoids the four patterns above. Therefore by Theorem~\ref{sjothm} $\mat(n,R_w,n)/(q-1)^n$ is (up to a power of $q$) a Poincar\'e polynomial $P_v(q)$.


\subsection{\texorpdfstring{$\mat_q(n,R_w,n)$}{mat(q;n,Rw,n)} for skew-vexillary permutations is a Poincar\'e polynomial}

In this section we use Sj\"ostrand's result (Theorem~\ref{sjothm}) to  show that for skew-vexillary permutations $w$, the function $\mat_q(n,R_w,n)/(q-1)^n$ is not only a polynomial with nonnegative coefficients but, up to a power of $q$, is a Poincar\'e polynomial.

\begin{proposition} \label{prop:MRP}
If $w$ is skew-vexillary then 
\[
\mat_q(n, R_w, n) = q^{\binom{n}{2}-\inv(w)}(q-1)^n\cdot P_v(q)
\]
for some permutation $v \in \mathfrak{S}_n$.
\end{proposition}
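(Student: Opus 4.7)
The plan is to combine three prior results: Corollary~\ref{cor:skewvex}, which gives a product-form formula for $\mat_q(n,R_w,n)$ when $w$ is skew-vexillary; Proposition~\ref{prop:nrksskew}, relating the $n$-rook NE and SE $q$-rook numbers of a skew shape; and Sj\"ostrand's Theorem~\ref{sjothm}, which identifies the set of rook placements on a left hull $H_L(v)$ with the Bruhat interval $[v,w_0]$ whenever $v$ avoids the four patterns $1324,24153,31524,426153$.

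First I would take the expression from Corollary~\ref{cor:skewvex} with $r=n$ and apply Proposition~\ref{prop:nrksskew} (after $q\mapsto q^{-1}$) to convert the NE $q$-rook polynomial into an SE one. Collecting powers of $q$ using the identity $n^2-n-\binom{n}{2}=\binom{n}{2}$ yields
\[
\mat_q(n,R_w,n) \;=\; (q-1)^n\, q^{\binom{n}{2}-\inv(w)+|\mu(w)|}\, R_n^{\sf (SE)}(S_{\lambda/\mu(w)},q).
\]
The proof of Proposition~\ref{prop:nrksskew} moreover identifies $R_n^{\sf (SE)}(S_{\lambda/\mu(w)},q)=\sum_u q^{\inv(u)-|\mu(w)|}$, with $u$ ranging over permutations in $\mathfrak{S}_n$ whose permutation matrix is supported in $S_{\lambda/\mu(w)}$. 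Thus
\[
\mat_q(n,R_w,n) \;=\; (q-1)^n\, q^{\binom{n}{2}-\inv(w)}\, \sum_u q^{\inv(u)},
\]
and it remains to realize $\sum_u q^{\inv(u)}$ as $P_v(q)$ for some $v$ avoiding the four patterns.

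To produce such a $v$, I would use the skew-vexillary decomposition $w=a\,b$ guaranteed by Theorem~\ref{thm:indecomp} (with $a$ on values $\{1,\ldots,k\}$ and $b$ on $\{k+1,\ldots,n\}$, both $2143$-avoiding after the natural shift). Pick $\alpha\in\mathfrak{S}_k$ to be a vexillary permutation whose Rothe diagram is the straight shape in the upper-left corner of the $k\times k$ block matching the rearrangement of $R_a$ used in the proof of Theorem~\ref{thm:indecomp}; similarly pick $\beta\in\mathfrak{S}_{n-k}$ for $b-k$; and set $v=\alpha\oplus rc(\beta)$, with $rc(\beta)$ shifted so that its values lie in $\{k+1,\ldots,n\}$. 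The reverse-complement operation on $\beta$ corresponds exactly to the $180^\circ$-rotation used in Theorem~\ref{thm:indecomp} to place the second straight shape in the bottom-right corner; an explicit calculation of $H_L(v)$ via non-inversion rectangles---within-block non-inversions generating the straight-shape boundaries on each side, and the abundant cross-block non-inversions filling the two off-diagonal blocks completely---then shows that the permutations supported in $H_L(v)$ are precisely the same as those supported in $S_{\lambda/\mu(w)}$. By Sj\"ostrand's theorem, provided $v$ avoids the four patterns, this set equals the Bruhat interval $[v,w_0]$, giving $\sum_u q^{\inv(u)}=P_v(q)$ and the desired formula.

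The main obstacle is verifying that the constructed $v$ avoids $1324$, $24153$, $31524$, and $426153$. Both $\alpha$ and $rc(\beta)$ are $2143$-avoiding (the latter because reverse-complement preserves $2143$-avoidance), and in $v=\alpha\oplus rc(\beta)$ every cross-block pair $(\alpha_i,k+(rc(\beta))_j)$ is in increasing order since $\alpha_i\le k<k+(rc(\beta))_j$. A case analysis on how the entries of a hypothetical forbidden pattern distribute between the two blocks therefore reduces to ruling out the pattern inside a single block. Since there is freedom in selecting $\alpha$ and $\beta$ among vexillary permutations realizing the required straight-shape Rothe diagrams, the remaining technical content is to show that this freedom suffices to choose representatives which additionally avoid $1324$; I expect this to follow by an inductive argument on $n$ or by an explicit construction. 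When $k\in\{0,n\}$ the construction collapses to Haglund's Theorem~\ref{jh} combined with one application of Sj\"ostrand's theorem.
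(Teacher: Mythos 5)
Your opening computation is correct and is exactly the paper's: combining Corollary~\ref{cor:skewvex} with Proposition~\ref{prop:nrksskew} to pass from the {\sf NE} to the {\sf SE} $q$-rook polynomial, and then interpreting $R_n^{\sf (SE)}(S_{\lambda/\mu(w)},q)$ as a sum over permutations supported in the skew shape, is precisely how the paper reduces the problem to exhibiting a four-pattern-avoiding $v$ with $H_L(v)=S_{\lambda/\mu(w)}$. The gap is in your pattern-avoidance step, and it is worse than the one you flag. Your claim that a forbidden pattern occurring across the two blocks of $v=\alpha\oplus\gamma$ (with $\gamma$ the shifted $rc(\beta)$) ``reduces to ruling out the pattern inside a single block'' is false for $1324$: since $1324=1\oplus 213=132\oplus 1$ is $\oplus$-decomposable, a cross-block occurrence only requires $\alpha$ to contain $132$ (with $\gamma$ nonempty) or $\gamma$ to contain $213$ (with $\alpha$ nonempty). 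So it is not enough to arrange that each block avoids $1324$; you need the strictly stronger conditions that $\alpha$ avoids $132$ and $\gamma$ avoids $213$. Concretely, $\alpha=132$ is vexillary, avoids $1324$, and has Rothe diagram rearrangeable to $S_{(1)}$, yet $132\oplus 1=1324$. (For the other three patterns your block reduction does work: each is $\oplus$-indecomposable and contains $2143$, so vexillarity of the blocks already excludes them.) The same example shows that ``a vexillary permutation whose Rothe diagram rearranges to the given straight shape'' is not a usable specification of $\alpha$: for such a choice the identity $H_L(v)=S_{\lambda/\mu(w)}$ can also fail, since the left hull depends on the actual positions of the $1$s, not on the Rothe diagram up to rearrangement.

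Both problems are solved simultaneously by the paper's explicit greedy construction: $v_i=\min(([n]\setminus[\mu_i])\setminus\{v_1,\dots,v_{i-1}\})$ for $i\le k$ and $v_{n+1-j}=\max([\lambda_{n-j}]\setminus\{v_{n-j+1},\dots,v_n\})$ for $j\le n-k$. This forces the prefix $v_1\cdots v_k$ to avoid $132$ (it is the dominant permutation attached to $\mu$) and the suffix to avoid $213$; the class of permutations $\alpha\oplus\gamma$ with $\alpha$ $132$-avoiding and $\gamma$ $213$-avoiding is closed under taking patterns and contains none of $1324$, $24153$, $31524$, $426153$; and the extremal placement of the $1$s against the boundaries of $\mu$ and $\lambda$ makes $H_L(v)=S_{\lambda/\mu(w)}$ immediate. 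So the ``explicit construction'' you defer to does exist and is essentially your $\alpha\oplus rc(\beta)$ with the dominant choices of $\alpha$ and $\beta$ (note $rc$ carries $132$-avoidance to $213$-avoidance), but the condition you must engineer and then verify is $132$/$213$-avoidance of the two blocks, not $1324$-avoidance.
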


\begin{figure}
\begin{center}
\includegraphics[height=3.6cm]{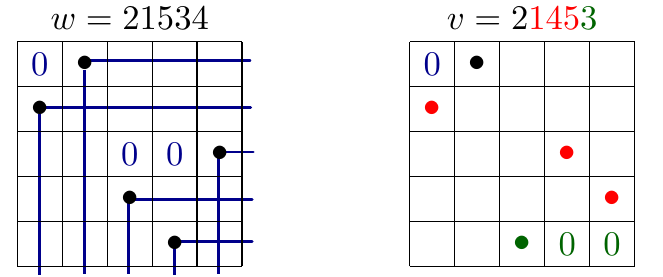}
\end{center}
\caption{Example of Proposition~\ref{prop:MRP}. For the permutations $w$ and $v$ shown we have that $\mat_q(5,R_w,5)/(q-1)^5 = q^{\binom{5}{2}-\inv(w)}\cdot P_v(q)$.}
\label{fig:exSkewPoin}
\end{figure}

\begin{proof}

If $w$ is skew-vexillary, then by Corollary~\ref{cor:skewvex} we know that 
\[
\mat_q(n, R_w,n)/(q-1)^n  =  q^{n^2 - \inv(w) - n}R_n^{\sf (NE)}(S_{\lambda/\mu(w)},q^{-1}),
\] 
where $R^{\sf (NE)}(S_{\lambda/\mu(w)},q)$ is the rook polynomial of $S_{\lambda/\mu(w)}$, the non-overlapping skew shape whose complement is the rearrangement of $R_w$. We will show that  this polynomial is the Poincar\'e polynomial $P_v(q)$ of a permutation $v$.

Define the permutation matrix of $v$ as follows: let $w=a_1a_2\cdots a_k b_1b_2\cdots b_{n-k}$ be the decomposition promised by Theorem~\ref{thm:indecomp}. Let $\lambda=(\lambda_1,\ldots,\lambda_n)$ and $\mu=(\mu_1,\ldots)$. For $i=1,\ldots,k$,  let $v_i = \min(([n]\setminus [\mu_i])\setminus \{v_1,\ldots,v_{i-1}\})$ and for $j=1,\ldots,n-k$ let $v_{n+1-j} = \max([\lambda_{n-j}]\setminus \{v_{n-j+1},\ldots,v_n\})$. This defines a $0$-$1$ matrix with exactly one $1$ in every row; it follows from the proof of Theorem~\ref{thm:indecomp} that this matrix is in fact a permutation matrix (with $\{v_1,\ldots,v_k\}=[k]$ and $\{v_{k+1},\ldots,v_{n}\}= \{n-k+1,\ldots,n\}$). See Figure~\ref{fig:exSkewPoin} for an example of this construction. It is clear that $S_{\lambda/\mu(w)} = H_L(v)$. Also by Proposition~\ref{prop:nrksskew} we have that $q^{\binom{n}{2}-|\mu|}R_n^{\sf (NE)}(S_{\lambda/\mu}(w),q^{-1})=R_n^{\sf (SE)}(H_L(v),q)$.

By construction the prefix $v_1\cdots v_k$ avoids $132$ and the suffix $v_{k+1}\cdots v_{n}$ avoids $213$ and $v_j>v_i$ for $j\geq k+1$ and $i\leq k$.
It is easy to see that the set of permutations with such a decomposition is closed under containment of patterns, and does not contain any of the permutations $1324$, $24153$, $31524$, and $426153$. Therefore, every permutation in this set, and in particular $v$, avoids these four patterns. By Theorem~\ref{sjothm} it follows that $q^{|\mu|}R_n^{\sf (SE)}(H_L(v),q)=P_v(q)$. Thus
\begin{align*}
\mat_q(n, R_w,n)/(q-1)^n  &=  q^{n^2 - \inv(w) - n}R_n^{\sf (NE)}(S_{\lambda/\mu(w)},q^{-1})\\
&= q^{\binom{n}{2}+|\mu|-\inv(w)} R_n^{\sf (SE)}(H_L(v),q)\\
&=  q^{\binom{n}{2}-\inv(w)}(q-1)^n\cdot P_v(q),
\end{align*}  
as desired.
\end{proof}

\begin{example}
By Theorem~\ref{thm:indecomp}, the permutation $w = 21534$ is skew-vexillary.  After rearranging rows and columns (see Figure~\ref{fig:exSkewPoin}), the skew shape $S_{\lambda/\mu(w)}$ is $S_{55553/1}$. The associated $v$ is $21453$ and we have
\begin{align*}
\mat_q(5,R_{21534},5) &= q^{10-3}(q-1)^5P_{21453}(q) \\
&= q^{7}(q-1)^5(       	
q^{10} + 4  q^{9} + 9  q^{8} + 14  q^{7} + 15  q^{6} + 11 
q^{5} + 5  q^{4} + q^{3}
).
\end{align*}
\end{example}

\begin{remark}
Note that the result above does not hold for all Rothe diagrams.  There exist permutations $w$ for which there does not exist any permutation $v$ such that $\mat_q(n, R_w,r) = q^{\binom{n}{2}-\inv(w)}(q-1)^rP_v(q)$.  For example, take $w = 31524$ (see Figure~\ref{fig:exRothe} (iii) and Table~\ref{fig:4specpatt}). In this case
\[ 
\mat_q(5, R_{31524},5)=q^{6}(q-1)^5(q^{10}+4q^9+9q^8+12q^7+10q^6+5q^5+q^4).
\] 
One can show (either by computer search or by a direct argument about the possible structure of the inversions) that there is no permutation $v$ in $\mathfrak{S}_5$ such that $P_v(q)=q^{10}+4q^9+9q^8+12q^7+10q^6+5q^5+q^4$.
\end{remark}



We have shown that for a skew-vexillary permutations $w$, $\mat_q(n,R_w,n)/(q-1)^n$ is equal  (up to a power of $q$) to the Poincar\'e polynomial of some permutation $v$. Next we consider the problem of classifying permutations $w$ such that $\mat_q(n,R_w,n)/(q-1)^n$ is equal (up to a power of $q$) to the Poincar\'{e} polynomial  of the same permutation.

\subsection{Further  relationships between \texorpdfstring{$\mat_q(n,R_w,n)$}{mat(q;n,Rw,n)} and Poincar\'e polynomials} 

Computational evidence for $n\leq 7$ \cite{Code} suggests the following conjecture.
\begin{conjecture} \label{conj:PoinRothe}
Fix a permutation $w$ in $\mathfrak{S}_n$ and let $R_w$ be its Rothe diagram. We have that $\mat_q(n,R_w,n)/(q-1)^n$ is coefficient-wise less than or equal to $q^{\binom{n}{2}-\inv(w)}P_{w}(q)$. We have equality if and only if $w$ avoids the patterns $1324, 24153, 31524$, and $426153$.
\end{conjecture}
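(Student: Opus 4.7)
The plan is to prove Conjecture~\ref{conj:PoinRothe} by combining Sj\"ostrand's Theorem~\ref{sjothm} with a Gaussian elimination argument in the spirit of Theorem~\ref{NEthm}. The starting observation is the classical identity (easily verified from the definitions): for permutations $u, w \in \mathfrak{S}_n$, $u \succeq w$ in the strong Bruhat order if and only if $(i, u_i) \notin R_w$ for every $i \in [n]$. Consequently, placements of $n$ non-attacking rooks on $\overline{R_w}$ are in natural bijection with the Bruhat interval $[w, w_0]$, and $P_w(q) = \sum_{C} q^{\inv(u(C))}$, where $C$ ranges over such placements and $u(C)$ is the associated permutation.

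Using this, I would first run a Gaussian elimination procedure on invertible matrices with support in $\overline{R_w}$, with pivots chosen in a canonical order analogous to the NE elimination of Theorem~\ref{NEthm}. The goal is to express
\begin{equation*}
\mat_q(n, R_w, n) = (q-1)^n \sum_{C} q^{a_w(C)},
\end{equation*}
where the sum is over rook placements on $\overline{R_w}$ and $a_w(C)$ counts positions of $\overline{R_w}$ that remain ``free'' after using the pivots in positions $(i, u(C)_i)$ to eliminate in the appropriate directions. The coefficient-wise inequality in Conjecture~\ref{conj:PoinRothe} would then follow from showing $a_w(C) \leq \binom{n}{2} - \inv(w) + \inv(u(C))$ for every $C$, with equality exactly on a to-be-characterized set of ``good'' rook placements.

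To handle the equality criterion, we invoke Sj\"ostrand's Theorem~\ref{sjothm}: when $w$ avoids $1324, 24153, 31524$, and $426153$, the Bruhat interval $[w, w_0]$ coincides with the set of all rook placements on the left hull $H_L(w)$, and $R_n^{\sf(SE)}(H_L(w), q) = q^{|\mu|}P_w(q)$ for $H_L(w) = S_{\lambda/\mu}$. The plan is to argue that for such $w$, every Gaussian elimination step stays inside $\overline{R_w}$---so that $a_w(C)$ exactly matches the $\inv_{\sf SE}$ statistic on $H_L(w)$ and the inequality becomes equality. Conversely, given a copy of one of the four patterns in $w$, I would exhibit (by explicit small-scale calculations in the spirit of the remark after Proposition~\ref{prop:MRP}) an elimination step that forces a nonzero entry to appear at a forbidden position of $R_w$, and track how this defect produces a strict drop in some coefficient of $\mat_q(n, R_w, n)/(q-1)^n$ relative to $q^{\binom{n}{2}-\inv(w)}P_w(q)$.

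The main obstacle is that $R_w$ lacks the \textsf{NE} property in general, so the clean argument of Theorem~\ref{NEthm} does not directly apply: an elimination step can introduce a nonzero entry at a position lying in $R_w$, spoiling the straightforward bijection with rook placements. Controlling this defect---and in particular showing that it vanishes exactly when $w$ avoids the four patterns---is the crux of the argument. A promising direction is to assign, to each rook placement $C$ and elimination step, a combinatorial quantity (such as a count of pattern occurrences affecting that step) and prove that summing these quantities exactly measures the gap $\binom{n}{2} - \inv(w) + \inv(u(C)) - a_w(C)$. The recurring appearance of $1324$, $24153$, $31524$, and $426153$ in \cite{Sj, GR2, AP, HLSS} strongly suggests such a characterization exists, but translating it into a full proof---and in particular verifying the converse direction that a single pattern occurrence suffices to force strict inequality---will require delicate bookkeeping, and is where we expect most of the technical effort to lie.
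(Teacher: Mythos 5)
This statement is Conjecture~\ref{conj:PoinRothe}: the paper does not prove it, but only verifies it computationally for $n\leq 7$ and establishes fragments of it (Proposition~\ref{prop:MRP}, which for skew-vexillary $w$ identifies $\mat_q(n,R_w,n)/(q-1)^n$ with the Poincar\'e polynomial of a generally \emph{different} permutation $v$, and Proposition~\ref{prop:numzeroes}). So there is no ``paper proof'' to compare against, and your proposal, which you yourself describe as a plan whose crux (``controlling this defect \ldots will require delicate bookkeeping'') is left open, does not close the gap: as written it is a research programme, not a proof.

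More seriously, your ``starting observation'' is false. It is not true that $u\succeq w$ in strong Bruhat order if and only if $(i,u_i)\notin R_w$ for all $i$; the Bruhat order is not characterized by avoidance of the Rothe diagram. Take $w=132$, so $R_w=\{(2,2)\}$. Then $u=321\succeq 132$ but its permutation matrix meets $(2,2)\in R_w$, while $u=213$ avoids $R_w$ entirely yet is incomparable to $132$ (both have one inversion). Consequently your claimed identity $P_w(q)=\sum_C q^{\inv(u(C))}$ over rook placements $C$ on $\overline{R_w}$ fails already here: the rook-placement sum is $2q+2q^2$, whereas $P_{132}(q)=q+2q^2+q^3$. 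Since the entire strategy --- the elimination bound $a_w(C)\leq\binom{n}{2}-\inv(w)+\inv(u(C))$ and the appeal to Theorem~\ref{sjothm} for the equality case --- is routed through this bijection between rook placements on $\overline{R_w}$ and the interval $[w,w_0]$, the argument collapses at the first step. Note also that even the $q=1$ shadow of the conjecture (that $\overline{R_w}$ and $H_L(w)$ admit equally many placements of $n$ non-attacking rooks exactly when $w$ avoids the four patterns) is itself only conjectural in the paper, so it cannot be assumed as known input to an inductive or bookkeeping argument.
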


\begin{table}
{\small
\[
\begin{array}{c|c|c}
w&1324 & 24153 \text{ or } 31524 \\ \hline  
\frac{\mat_q(n,R_w,n)}{(q-1)^nq^{k}}&
{q}^{6}+3\,{q}^{5}+5\,{q}^{4}+5{q}^{3}+3\,{q}^{2}+{q}\phantom{+1}& {q}^{10}+4{q}^{9}+9{q}^{8}+12{q}^{7}+10{q}^{6}+5{q}^{5}
+{q}^{4}\\[2mm]
P_w(q)&
q^{6} + 3q^{5} + 5q^{4} + 6q^{3} + 4q^{2} + q\phantom{+1}&        	
q^{10} + 4q^{9} + 9q^{8} + 13q^{7} + 11q^{6} + 5q^{5} + q^{4}\\[2mm]
q^aR_n^{\sf (SE)}(H_L(w))& q^6+3q^5 + 5q^4 + 6q^3 + 5q^2 + 3q + 1  & q^{10} + 4q^9 + 9q^8 + 13q^7 + 12q^6+7q^5 + 2q^4 \\[3mm]
\hline \hline  
w& \multicolumn{2}{c}{426153}\\ \hline
\frac{\mat_q(n,R_w,n)}{(q-1)^nq^k}&  \multicolumn{2}{c}{{q}^{15}+5{q}^{14}+14{q}^{13}+24{q}^{12}+27{q}^{11}+19{q}^{
10}+7{q}^{9}+{q}^{8}}\\[2mm]
P_w(q)&\multicolumn{2}{c}{
 q^{15} + 5q^{14} + 14q^{13} + 25q^{12} + 28q^{11} + 19q^{10} + 7q^{9} + q^{8}}\\[2mm] 
q^aR_n^{\sf (SE)}(H_L(w))&\multicolumn{2}{c}{q^{15} + 5q^{14} +14q^{13} + 25q^{12} +29q^{11} + 21q^{10} + 8q^9 + q^8}
\end{array}
\]
}
\caption{For the four special patterns $w$ of Conjecture~\ref{conj:PoinRothe} we give $\mat_q(n,R_w,n)/((q-1)^nq^k)$ where $k=\binom{n}{2}-\inv(w)$, the Poincar\'e polynomials $P_w(q)$, and $q^aR_n^{\sf (SE)}(H_L(w),q)$ where $a$ is the size of the subtracted partition of the skew shape $H_L(w)$.} \label{fig:4specpatt}
\end{table}

\begin{remark} 
The patterns that appear in  Conjecture~\ref{conj:PoinRothe} and in Theorem~\ref{sjothm} are the same. Also, the reverses  $4231, 35142, 42513$, and $351624$ of these patterns have appeared in related contexts in a conjecture of Postnikov \cite{AP} proved by Hultman-Linusson-Shareshian-Sj\"ostrand \cite{HLSS}, and in work by Gasharov-Reiner \cite{GR2}. This suggests further interesting connections. These permutations were recently enumerated by  Albert-Brignall \cite{AB}. 
\end{remark}

The values of the three polynomials $\mat_q(n,R_w,n)/(q-1)^n$, $P_w(q)$, and $R^{\sf (SE)}_n(H_L(w),q)$ when $w$ is equal to the four patterns of Conjecture~\ref{conj:PoinRothe} are shown in Table~\ref{fig:4specpatt}. In these cases the three polynomials are all different. By Theorem~\ref{sjothm} and Theorem~\ref{NEthm}, Conjecture~\ref{conj:PoinRothe} is equivalent to the following:


\begin{conjecture}\label{conj:RookRothe}
Fix a permutation $w$ in $\mathfrak{S}_n$, let $R_w$ be its Rothe diagram and let $a_w = n^2 - \#H_L(w) - \inv(w)$. We have that $\mat_q(n,R_w,n)/(q-1)^n$ is coefficient-wise less than or equal to $q^{a_w}\mat_q(n,\overline{H_L(w)},n)/(q-1)^n$. We have equality if and only if $w$ avoids the patterns $1324$, $24153$, $31524$, and $426153$.
\end{conjecture}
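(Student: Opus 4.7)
The plan is to prove Conjecture~\ref{conj:RookRothe} by reformulating both sides in terms of sums over rook placements (equivalently, permutations) and then comparing them term by term. The main obstacle throughout is that $\overline{R_w}$ generally fails the {\sf NE} Property, so the clean elimination argument of Theorem~\ref{NEthm} does not apply directly to the left-hand side.

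First, I would rewrite the right-hand side combinatorially. Since $H_L(w)$ is a skew shape, it has the {\sf NE} Property, and Theorem~\ref{NEthm} gives $\mat_q(n, \overline{H_L(w)}, n)/(q-1)^n = q^{\#H_L(w) - n} R_n^{\sf (NE)}(H_L(w), q^{-1})$. Using the identity $\inv_{\sf NE}(u, H_L(w)) = \binom{n}{2} - \inv(u)$ from the proof of Proposition~\ref{prop:nrksskew} together with $a_w = n^2 - \#H_L(w) - \inv(w)$, a direct computation yields
\[
\frac{q^{a_w} \mat_q(n, \overline{H_L(w)}, n)}{(q-1)^n} = q^{\binom{n}{2} - \inv(w)} \sum_{u \in \mathcal{H}_w} q^{\inv(u)},
\]
where $\mathcal{H}_w = \{u \in \mathfrak{S}_n : (i, u_i) \in H_L(w) \text{ for all } i\}$. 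By Theorem~\ref{sjothm}, $\mathcal{H}_w = [w, w_0]$ exactly when $w$ avoids the four patterns, in which case the sum equals $P_w(q)$.

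Next, I would aim for a parallel formula for the left-hand side via a modified NE-elimination. For each rank-$n$ matrix $M$ with support in $\overline{R_w}$, process columns left to right and within each column pivot on the lowest nonzero entry, using it to eliminate entries directly north in that column and directly east in that row. The pivot positions form a permutation $\pi$. Since $\overline{R_w}$ lacks the {\sf NE} Property, fill-ins at positions $(k, \ell) \in R_w$ impose algebraic (determinantal) constraints on $M$, rather than being automatic. The plan is to show that (i)~each reachable $\pi$ lies in $\mathcal{H}_w$ and (ii)~the number of matrices reducing to $\pi$ is of the form $(q-1)^n q^{g(\pi)}$ with $g(\pi) \leq \binom{n}{2} - \inv(w) + \inv(\pi)$, yielding the inequality term-by-term.

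The hard part will be controlling the fill-in constraints and proving the exact formula for $g(\pi)$. The key technical lemma would leverage the Le property of $R_w$: in the absence of a pattern, the combinatorics of the forbidden $\Gamma$-configuration should imply that every fill-in in $R_w$ can be realized as a fill-in whose value is automatically forced to be zero by the constraints already imposed elsewhere, so no parameters are lost and $g(\pi)$ attains the maximum value for every $\pi \in [w, w_0]$, giving equality. Conversely, if $w$ contains one of the four forbidden patterns, the plan is to exhibit a specific $\pi \in \mathcal{H}_w$ (equivalently, $\pi \succeq w$ when applicable) whose fiber is strictly smaller by examining the four $5$-, $5$-, $5$-, and $6$-element patterns explicitly and locating a $2 \times 2$ minor constraint that eliminates at least one degree of freedom. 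Making the fill-in analysis fully rigorous, and identifying the witness permutation in the containment direction, are the crux of the argument.
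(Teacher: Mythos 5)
The statement you are trying to prove is Conjecture~\ref{conj:RookRothe}: the paper does not prove it. It is presented as an open conjecture, supported only by computer verification for $n\leq 7$ and by the observation (via Theorems~\ref{NEthm} and~\ref{sjothm}) that it is equivalent to Conjecture~\ref{conj:PoinRothe}; the only proved fragment is Proposition~\ref{prop:numzeroes}, a cardinality inequality between $R_w$ and $\overline{H_L(w)}$ for $1324$-avoiding $w$. So there is no ``paper proof'' to match, and your proposal should be judged on whether it actually closes the problem. It does not. The part of your argument that is correct and complete --- rewriting $q^{a_w}\mat_q(n,\overline{H_L(w)},n)/(q-1)^n$ as $q^{\binom{n}{2}-\inv(w)}\sum_{u\in\mathcal{H}_w}q^{\inv(u)}$ via Theorem~\ref{NEthm} and Proposition~\ref{prop:nrksskew}, and invoking Theorem~\ref{sjothm} --- only re-derives the equivalence with Conjecture~\ref{conj:PoinRothe} that the paper already states. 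Everything genuinely at stake sits in your analysis of the left-hand side, and that analysis is a plan, not a proof.

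Two concrete gaps. First, your key lemma --- that the fiber of the modified NE-elimination over a pivot permutation $\pi$ has size $(q-1)^n q^{g(\pi)}$ --- is precisely what fails for boards without the {\sf NE} Property. Section~\ref{sec:obstruction} of the paper explains the obstruction: fill-in at a position of $R_w$ imposes a vanishing condition on a $2\times 2$ (or larger) minor, not on an entry, so the fibers are determinantal varieties whose point counts need not have the form $(q-1)^n q^{g}$, and a priori need not even be polynomial in $q$. Indeed, even the polynomiality and positivity of $\mat_q(n,R_w,n)/(q-1)^n$ itself is only Conjecture~\ref{conj:rothe}; your lemma would imply it, so you cannot treat it as a routine computation. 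Your appeal to the Le property to argue that ``every fill-in is automatically forced to be zero'' is exactly the assertion that needs proof, and no mechanism is given. Second, your step (i), that every reachable pivot permutation lies in $\mathcal{H}_w$, is asserted without justification; note that $\overline{R_w}\not\subseteq H_L(w)$ in general (the paper remarks that the two boards are distinct even up to permuting rows and columns), so the containment of pivot sets is not a formality. Until the fiber structure and the image of the pivot map are controlled, the term-by-term comparison, and with it both the inequality and the characterization of equality, remains open.
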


This conjecture is not true for matrices of lower rank. For example, for $w=21\in \mathfrak{S}_2$ we have $\mat_q(2,R_{21},1)/(q-1)=2q+1$ and $\mat_q(2,\overline{H_L(21)},1)/(q-1)=2$. 

\begin{remark}
If Conjecture~\ref{conj:RookRothe} holds then by Theorem~\ref{NEthm} and \cite[Prop. 5.1]{LLMPSZ} it follows that whenever $w$ avoids the four patterns, the shapes $\overline{R_w}$ and $H_L(w)$ have the same number of placements of $n$ non-attacking rooks. This is not obvious since for such permutations the shapes are distinct even after permuting rows and columns. Moreover, computer experiments for $n\leq 7$ \cite{Code} suggest that the converse is also true, i.e., if $w$ contains any of the four patterns, the shapes have different numbers of rook placements. This apparent equivalence of necessary and sufficient conditions between the ``$q$ case'' and the ``$q=1$ case'' does not necessarily hold in similar settings (see \cite[Thm. 7]{OPY} and \cite[Thm. 3.4]{HLSS} for an example).  
\end{remark}

We end by giving a very preliminary step in proving these conjectures. 

\begin {proposition}\label{prop:numzeroes}
If $w$ is a $1324$-avoiding permutation then the complement of $H_L(w)$ has at least as many entries as the Rothe diagram $R_w$ of $w$.  
\end {proposition}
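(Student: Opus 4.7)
The plan is to recast the inequality as $|R_w|+|H_L(w)| \leq n^2$ and to exhibit an explicit injection from the $\inv(w)$ inversions of $w$ (which biject with $R_w$ via $(p,q) \leftrightarrow (p,w_q)$) into $\overline{H_L(w)}$. First, I would describe $H_L(w)$ explicitly: the smallest skew shape containing the permutation matrix of $w$ is $S_{\lambda/\mu}$ with $\lambda_i = M_i := \max(w_i,\ldots,w_n)$ and $\mu_i = m_i - 1$, where $m_i := \min(w_1,\ldots,w_i)$. Both sequences are partitions since $M_i$ and $m_i$ are weakly decreasing in $i$, so the $i$th row of $H_L(w)$ is exactly the interval $\{(i,j) : m_i \leq j \leq M_i\}$, and $\overline{H_L(w)}$ is the disjoint union $A \sqcup B$ where $A := \{(i,j) : j < m_i\}$ and $B := \{(i,j) : j > M_i\}$.

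Next, for each inversion $(p,q)$ of $w$ (so $p<q$ and $w_p>w_q$) I would propose two candidate images: the $A$-target $(p, w_q)$ and the $B$-target $(q, w_p)$. The $A$-target lies in $A$ iff $w_q < m_p$, and the $B$-target lies in $B$ iff $w_p > M_q$. The key lemma is that, under the $1324$-avoiding hypothesis, at least one of the two targets is valid for every inversion. Indeed, if the $A$-target fails, then some $p' \leq p$ has $w_{p'} \leq w_q$; since $w_p > w_q$ we must have $p' < p$, and then distinctness of values forces $w_{p'} < w_q$. Symmetrically, if the $B$-target fails, some $q' > q$ has $w_{q'} > w_p$. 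Combined with $w_q < w_p$, the quadruple $p' < p < q < q'$ carries values in the relative order $1, 3, 2, 4$, contradicting $1324$-avoidance.

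Define $\phi$ by sending each inversion $(p,q)$ to its $A$-target when valid, and to its $B$-target otherwise; by construction $\phi$ maps into $A \cup B = \overline{H_L(w)}$. For injectivity, observe that the $A$-target $(p, w_q)$ and the $B$-target $(q, w_p)$ each uniquely determine the inversion $(p,q)$, so distinct inversions cannot collide within $A$ or within $B$; and an $A$-target $(p, w_q)$ coinciding with a $B$-target $(q', w_{p'})$ would force $p = q'$ and $q = p'$, contradicting both $p < q$ and $p' < q'$. Therefore $|R_w| = \inv(w) \leq |\overline{H_L(w)}|$, as desired. The crux of the argument is the pattern-extraction step in the middle paragraph; everything else is routine bookkeeping.
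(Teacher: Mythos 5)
Your proof is correct and is essentially the paper's own argument in different notation: your map sends the cell $(p,w_q)$ of $R_w$ to itself when no entry of $w$ lies strictly northwest of it, and to $(q,w_p)=(w^{-1}_j,w_i)$ otherwise, which is exactly the paper's $\varphi$, and your key lemma extracts the same $1324$ pattern from the simultaneous failure of both targets. The only cosmetic differences are your explicit description of $H_L(w)$ via $m_i$ and $M_i$ and your direct coordinate argument for disjointness of the two image sets (the paper instead invokes the two components of $\overline{H_L(w)}$).
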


\begin{proof}
Let $w$ be a $1324$-avoiding permutation. We give a one-to-one map $\varphi$ between the entries of the Rothe diagram $R_w$ and the complement of the left hull $H_L(w)$.

Given an entry $(i,j)$ in $R_w$ we have two possibilities: either there is or there is not an entry  $(k,w_k)$ of $w$ such that $k<i$ and $w_k<j$ (i.e., an entry of $w$ NW of $(i,j)$). Let $A_w$ be the set of entries of $R_w$ of the first type and let $B_w$ be the set of entries of the second type.  If $(i,j)\in A_w$ then define $\varphi(i,j)=(i,j)$. If instead $(i,j)\in B_w$ then define $\varphi(i,j)=(w^{-1}_j,w_i)$. See Figure~\ref{fig:exnumzeroes} for an illustration of $\varphi$. We show that $\varphi$ is  well-defined and injective.

\begin{figure}
\begin{center}
\includegraphics[height=2.8cm]{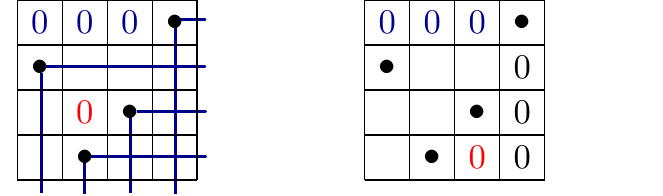}
\end{center}
\caption{Example of Proposition~\ref{prop:numzeroes}. For the permutation $w=4132$, the Rothe diagram is $R_w=\{(1,1),(1,2),(1,3),(3,2)\}$ and the left-hull $\overline{H_L(w)}=\{(1,1),(1,2),(1,3),(2,4),(3,4),(4,3),(4,4)\}$. The map $\varphi$ is given by $\textcolor{DarkBlue}{(1,j)}\mapsto \textcolor{DarkBlue}{(1,j)}$ for $j=1,2,3$ and $\textcolor{red}{(3,2)}\mapsto \textcolor{red}{(4,3)}$.}\label{fig:exnumzeroes}
\end{figure}

Choose $(i,j)$ in $R_w$. There are no entries of $w$ above $(i,j)$ in the same column or to its left in the same row. If in addition $(i,j)$ is in $A_w$ then by definition of the left hull the entry $(i,j)$ is not in $H_L(w)$. In this case $\varphi(i,j)=(i,j)\in \overline{H_L(w)}$ as desired. 

On the other hand, if $(i,j)$ is in $B_w$ then there is some entry $(k,w_k)$ of $w$ with $k<i$ and $w_k<j$. Since $w$ is $1324$-avoiding, there can be no entry $(\ell,w_{\ell})$ of $w$ such that $\ell\geq w^{-1}_j$ and $w_{\ell}\geq w_i$. Thus, $\varphi(i,j)=(w^{-1}_j,w_i) \in \overline{H_L(w)}$. This completes the proof that the map $\varphi$ is well-defined.

Finally, we  show that $\varphi$ is one-to-one. Since $\varphi$ is defined piecewise it is enough to show that $\varphi$ is one-to-one on $A_w$ and $B_w$ and that $\varphi(A_w)$ and $\varphi(B_w)$ are disjoint. The injectivity on $A_w$ is trivial. The injectivity on $B_w$ follows since $w$ is a permutation and so $(w^{-1}_j,w_i)$ uniquely defines $(i,j)$. Moreover, $\overline{H_L(w)}$ has two components; $\varphi(A_w)$ is the NW component while $\varphi(B_w)$ is contained in the SE component, so the images are disjoint. This completes the proof that $\varphi$ is one-to-one.
\end{proof}


\bibliography{biblio-Primes}{}
\bibliographystyle{plain}

\noindent Aaron J. Klein, 
Brookline High School.
Brookline, MA USA 02445.\\
{\tt ajmath62@gmail.com}

\medskip 

\noindent Joel B. Lewis, 
School of Mathematics, University of Minnesota, Minneapolis, MN 55455 USA.\\
{\tt jblewis@math.umn.edu}

\medskip

\noindent Alejandro H. Morales, 
Laboratoire de Combinatoire et d`informatique mathematique (LaCIM)
Universit\'e du Qu\'ebec \`a Montr\'eal,
Montr\'eal, QC, H3C 3P8 Canada.\\
{\tt ahmorales@lacim.ca}

\end{document}